\def\antiddot{\mathinner{\mkern1mu\raise1pt\vbox{\kern7pt\hbox{.}}\mkern2mu
        \raise4pt\hbox{.}\mkern2mu\raise7pt\hbox{.}\mkern1mu}}
\newcommand{\JJ}{{\mathbb J}}
\newcommand{\RR}{{\mathbb R}}
\newcommand{\ann}{{\rm{ann}}}
\newcommand{\Ext}{{\rm{Ext}}}
\newcommand{\coker}{{\rm{coker}\,}}
\newcommand{\punkt}{\hspace{-.3ex}\raise.15ex\hbox to1ex{\Huge.}}
\def \fix#1 {{\hfill\break \bf (( #1 ))\hfill\break}}
\DeclareMathOperator{\Sym}{Sym}
\DeclareMathOperator{\Hom}{Hom}
\DeclareMathOperator{\depth}{depth}
\DeclareMathOperator{\length}{length}
\DeclareMathOperator{\codim}{codim}
\newcommand{\gm}{\mathfrak m}
\def\gr{{\mathfrak {gr}}}
\newtheorem{theorem}{Theorem}[section]
\newtheorem{lemma}[theorem]{Lemma}
\newtheorem{proposition}[theorem]{Proposition}
\newtheorem{corollary}[theorem]{Corollary}
\newtheorem{conjecture}[theorem]{Conjecture}
\theoremstyle{definition}
\newtheorem{remark}[theorem]{Remark}
\newtheorem{example}[theorem]{Example}
\def\P{{\mathbb P}}
\def\image{{\rm image}}
\def\fix#1{{\bf ***Fix:} #1 {\bf ***}}
\DeclareMathOperator{\rH}{{\rm H}}
\def\fC{{\mathfrak C}}
\def\Tr{{\rm Tr}}
\def\rH{{\rm H}}
\def\soc{{\rm soc\,}}
\def\jacobian{{\rm Jac}}
\def\Rbar{{\overline R}}
\def\Ibar{{\overline I}}
\def\mm{{\frak m}}
\def\RR{{\mathcal R}}
\def\Trace{{\rm Tr}}
\def\tR{{\tilde R}}
\def\tI{{\tilde I}}
\def\tJ{{\tilde J}}
\def\tK{{\tilde K}}
\def\tH{{\tilde H}}
\def\tF{{\tilde F}}
\def\Abar{{\overline A}}
\def\Rbar{{\overline R}}
\def\Ibar{{\overline I}}
\def\Jbar{{\overline J}}
\def\Kbar{{\overline K}}
\def\abar{{\overline \alpha}}
\def\bbar{{\overline \beta}}
\def\m{{\frak m}}
\def\Rbar{{\overline R}}
\def\Sbar{{\overline S}}
\def\gr{{\rm gr}}
\def\bD{{\mathbb D}}
\def\fD{{\frak D}}
\def\lbracket{{[\kern-1.5pt[}}
\def\rbracket{{]\kern-1.5pt]}}
\def\seq#1#2{{#1_{1},\dots,#1_{#2}}}
\def\Ddots{\mathinner{\mkern1mu\raise\p@
\vbox{\kern7\p@\hbox{.}}\mkern2mu
\raise4\p@\hbox{.}\mkern2mu\raise7\p@\hbox{.}\mkern1mu}}
\newdimen\x \x=12pt
\author{David Eisenbud and Bernd Ulrich}
\title{Duality and Socle Generators \\
for Residual Intersections}
\begin{document}

\begin{abstract}

We prove duality results for residual intersections that  unify and complete results of  van Straten,
Huneke-Ulrich and Ulrich, and settle  conjectures of van Straten and Warmt.

Suppose that $I$ is an ideal of codimension $g$ in a Gorenstein ring,
and $J\subset I$ is an ideal with $s = g+t$ generators such that $K := J:I$ has codimension $s$.  Let $\Ibar$ be the image of $I$ in $\Rbar :=R/K$.

In the first part of the paper we prove, among other things, that under suitable hypotheses on $I$, the truncated Rees ring
$$
\Rbar\oplus \Ibar\oplus\cdots\oplus \Ibar^{t+1}
$$
is a Gorenstein ring, and that the modules $\Ibar^{u}$ and $\Ibar^{t+1-u}$ are dual
to one another via the multiplication pairing into ${\Ibar^{t+1}} \cong {\omega_{\overline R}}$.

In the second part of the paper we study the analogue of residue theory, and prove that, when $R/K$ is a finite-dimensional algebra over a field of characteristic 0 and certain other hypotheses are satified, the socle of $I^{t+1}/JI^t \cong {\omega_{R/K}}$ is generated by a Jacobian determinant.
\end{abstract}

\maketitle

\let\thefootnote\relax\footnote{
\noindent AMS Subject Classification:\\
Primary: 13C40, 13H10, 14M06, 14M10;
Secondary: 13D02 , 13N05, 14B12, 14M12.\smallbreak
This paper reports on work begun during the Commutative Algebra Program, 2012-13,
at MSRI. We are grateful
to MSRI for providing such an exciting environment, where a chance meeting
led to the beginning of the work described here. Both authors are grateful to the
National Science Foundation for partial support. The second author was
also supported as a Fellow of the Simons Foundation.}

\noindent There are two important aspects of duality for local complete intersections. Let
$T = k[[\seq x n]]/(\seq a {n-d})$ be a power series ring over a field $k$ modulo an ideal generated by the regular sequence $\seq a {n-d}$.
The first aspect is so central that it has become a definition: such a ring $T$ is \emph{Gorenstein}---that is,
$T\cong \omega_T$, the canonical module of $T$. In the case where $T$ is 0-dimensional, this means that $T\cong \Hom_{k}(T,k)$ as a $T$-module;
and more generally that $T\cong \Hom_A(T,A)$ as $T$-module, where $A$ 
is a
Noether normalization of $T$.

The second important aspect is the theory of residues, which we think of as the explicit identification of the canonical module. Suppose that $(T, \m)$ is a reduced, equidimensional complete local $k$-algebra
of dimension $d$, where $k$ is a perfect field, and let $L$ be its total ring of quotients. Let $A = k\lbracket \seq x d\rbracket \subset T$ be a separable
Noether normalization, that is, $T$ is module finite over $A$ and  $L$ is a product of separable field extensions of $K$, the quotient field of $A$.  We think of the canonical module $\omega_{T}$ as
$\Hom_{A}(T,A)$, which, after tensoring with $K$, is generated by the trace map $\Tr_{L/K}$. Thus
there is a fractional ideal $\fC(T/A) \subset L$, called the {\it Dedekind complementary module}, such that
$$
 \Hom_A(T,A) = \fC(T/A)\; \Tr_{L/K}.
$$
 The resulting representation of $\omega_{T}$ as
 $$\fC(T/A) \, dx_{1}\wedge\cdots\wedge dx_{d} \subset  L \, dx_{1}\wedge\cdots\wedge dx_{d} =L \otimes_T \bigwedge^d \Omega_{T/k},
  $$
 where $\Omega_{T/k}$ is the universally finite module of differentials, is independent of the choice of $A$. The  usual residue map ${\rm H}^{d}_{\m}(\omega_{T}) \to k$,
  which serves to make local duality explicit, is then defined by representing
 an element $\alpha \in {\rm H}^{d}_{\m}(\omega_{T})$ as a \v Cech class
$$
\alpha = \left[
\begin{matrix}
 f dx_{1}\wedge\cdots \wedge dx_{d} \\
 x_{1}\cdot \cdots \cdot x_{d}
\end{matrix}
\right]\,,
$$
for suitable $f\in \fC(T/A)$ and suitable $A$,
and mapping $\alpha$ to $\Tr_{L/K}(f)(0)$.  For all this, see for example Kunz~\cite[Chapter 10]{K2}.

A goal of the theory is thus to compute $\fC(T/A)$.
When $T$ is a complete intersection,
the classical theory says that
$$
\fC(T/A) = \Delta^{-1}T,
$$ where $\Delta$ is the Jacobian determinant of $T$ over $A$. Equivalently,
$\Tr_{T/A} $ is $\Delta$ times a generator $\sigma$ of $\Hom_{A}(T,A) \cong T$.

\def\Tbar{{\overline T}}
These statements imply that, if $k$ has characteristic 0 and $T$ is a complete intersection, then  $\Delta\Tbar$
is the socle of $\Tbar := T/(x_{1},\dots,x_{d})$. The well-known argument goes as follows:
Since $T$ is Cohen-Macaulay, the fact that the trace is $\Delta$ times $\sigma$ is preserved if we factor out $x_{1},\dots,x_{d}$ to get a
zero-dimensional ring $\Tbar$. Since the maximal ideal $\m\Tbar$ is nilpotent, the
trace $\Tr_{\Tbar/k}$ annihilates $\m \Tbar$, but, because the characteristic is 0, the trace is not zero.
It follows that $\Tr_{\Tbar/k} =\Delta\overline \sigma$ generates the socle of $\Hom_{k}(\Tbar,k)= \overline \sigma \Tbar$.
Thus $\Delta \Tbar$ is the socle of $\Tbar$. In Section~\ref{Sclassic socle} we give the classical proof for complete intersections.

In this paper we provide analogous duality results for residual intersections. We recall the definition: Let $I$ be an ideal of codimension $g$ in a local Gorenstein ring $R$, and let $s\geq g$. A \emph{residual intersection} (or \emph{$s$-residual intersection}) of
$I$ is a proper ideal $K$ of codimension at least $s$ that can be written in the form
$K = J:I$, where $J\subset I$ is an ideal generated by $s$ elements. We set $t = s-g$. We will use this notation for the rest of this introduction.  We think of $t$ as measuring how far $J$ is from being a complete intersection. The case when $I$ is unmixed and $t=0$ is the case of linkage (\cite{PS}). The class of residual intersections contains the ideals of maximal minors of sufficiently general matrices and many other examples. Our general results have technical hypotheses, so we begin with an example.

\bigbreak
\noindent{\bf Duality.} Suppose that
$I$ is generated by a regular sequence of length $g$ in a  local Gorenstein ring $R$ with infinite residue field,
and $J$ is generated by $s = g+t< \dim R$ elements chosen generally inside
the maximal ideal times $I$. The ideal $K = J:I$ is then an $s$-residual intersection (even a geometric $s$-residual intersection, as defined in Section~\ref{section hypotheses}).
 We write $\Ibar$ for
the image  of $I$ in $\Rbar:=R/K$. By a result of Huneke and Ulrich~\cite{HU} (see Theorem ~\ref{basic 1}), the canonical module of
$\Rbar$ is $\Ibar^{t+1}$; in particular, when $t=0$, the truncated Rees algebra $\Rbar\oplus\Ibar$ is Gorenstein.
We show for arbitrary $t$ that the truncated Rees algebra
$$
\Rbar\oplus \Ibar\oplus \Ibar^{2}\oplus\cdots\oplus \Ibar^{t+1}
$$
is Gorenstein, which implies that the complementary intermediate powers $\Ibar^u$ and $\Ibar^{t+1-u}$ are dual to each other
via the multiplication pairing into $\Ibar^{t+1}$.
We also prove corresponding results for the truncated associated graded ring
$$
\Rbar/\Ibar\oplus \Ibar/\Ibar^{2}\oplus\cdots\oplus \Ibar^{t}/\Ibar^{t+1}
$$
(Theorem~\ref{associated graded} and Proposition~\ref{gor from gor}).

\medbreak
\noindent{\bf Residues.} To illustrate the second main result of this paper, again in the case where $I$ is a complete intersection,
we suppose in addition to the above that $R$ is a power series ring in $d$ variables $x_i$ over a field of characteristic 0.
Let  $A = k\lbracket x_{s+1}, \dots, x_{d}\rbracket$ be a general Noether normalization of $\Rbar$.
Write   $J = (\seq a s)$ and set
$$
\Delta = \det
\begin{pmatrix}
\frac{ \partial a_1}{\partial x_1} &\dots & \frac{ \partial a_1}{\partial x_s}\\
\vdots&\ddots&\vdots\\
\frac{ \partial a_s}{\partial x_1} &\dots & \frac{ \partial a_s}{\partial x_s}\\
\end{pmatrix}.
$$
We strengthen the statement $\omega_{\Rbar} \cong \Ibar^{t+1}$ by proving in Theorem~\ref{Dedekind complementary module} that
$$
 \fC(\Rbar/A) = \Delta^{-1}\Ibar^{t+1}
 $$
 if $\Rbar$ is reduced. This gives an explicit description of the complementary module
 of residual intersections. 
 
As an application, in Corollary~\ref{determinantal rings}, 
we give a formula for the complementary module of any reduced ring defined by an ideal of maximal minors of generic codimension.

We also apply Theorem~\ref{Dedekind complementary module} to certain 0-dimensional residual intersections, with the goal of identifying the  socles of their canonical modules as Jacobian determinants.
For example, when $R/K$ is 0-dimensional, we obtain a formula for the socle of
$I^{t+1}/JI^t \cong \omega_{R/K} \, $: it is generated by the image of an element of the form $\Delta+p$, where
$p\in(a_{1}, \dots, a_{s-1})$
(Theorem~\ref{socle is jacobian}). In general $\Delta$ itself is not even in $I^{t+1}$,
but, when it is, it generates the socle.  

We show that $\Delta\in I^{t+1}$ when the generators $a_{j}$ of $J$ are forms of the same degree and $I$ is radical (Theorem~\ref{Jacobian containment}). In Proposition \ref{socle in principal case} we prove this without the radical condition when $I$ is principal---already a nontrivial computation. In general, we do not know whether the radical condition is necessary.

When the generators of $J$ have  different degrees, the ideal $(\Delta)$ depends on the choice of generators,
and in this case $\Delta$ may not be in $I^{t+1}$ (Example~\ref{counter-example1}). We show that this can even happen when
$J$ is generated by the partial derivatives of a quasi-homogeneous polynomial, and thus have the same degrees with respect to an appropriate weighting (Example~\ref{counter-example2}).
\bigbreak

Our results are much more general than the setting above. We assume that $R$ is Gorenstein and that $I$ satisfies two sorts of conditions: one on the local numbers of generators and the other that $\depth (R/I^{u}) \geq \dim R/I-u+1$ for some range of values of $u$. We assume that
$K = J:I$ is an $s$-residual intersection of $I$ and we set
$t = s-\codim I$.

Our main results on duality are Theorems \ref{duality} and \ref{general duality}, which unify and complete a number of results of Huneke, Ulrich and van Straten.
Theorem~\ref{duality} says that
$$
\frac{I^{u}}{JI^{u-1}} \hbox{\quad is dual to\quad }\frac{I^{t+1-u}}{JI^{t-u}} \hbox{\quad for $u=0,\dots, t+1$,}
$$
where, in the case $u=0$, we interpret $JI^{-1}$ as $J:I$.
In fact we show that the duality is given in the most natural way, by multiplication,
$$
\frac{I^{u}}{JI^{u-1}}\otimes\frac{I^{t+1-u}}{JI^{t-u}} \rTo^{\hbox{\quad \small{mult} \quad}} \frac{I^{t+1}}{JI^{t}} \cong \omega_{R/K}.
$$
On the other hand,
Example~\ref{mystery module} shows that the duality statement above can hold even when the multiplication maps are not perfect pairings.

Theorem \ref{general duality} gives a deformation condition under which such dualities hold that is in many cases  more general than the condition of Theorem~\ref{duality}. In Section~\ref{codim2 examples} we present examples showing the necessity of some of the hypotheses.

In Theorems~\ref{Dedekind complementary module}, \ref{socle is jacobian}, and \ref{Jacobian containment} we  prove theorems about  $\fC(\Rbar/A)$ and the socle  extending the results described above to the more general case as well.

\vskip .5cm
\noindent{\bf History.}
Residual intersections have a long history in Algebraic Geometry, perhaps beginning with Chasles' Theorem that there are 3264 conics in the complex projective plane that are tangent to 5 general  conics \cite{C}. The theory became part of commutative algebra with the work of Artin and Nagata \cite{AN}. They asserted the Cohen-Macaulay property of residual interstections, but stated it more generally than it is true. The error was corrected by Huneke \cite{H2}, and a series of papers, culminating in \cite{U}, gave stronger and stronger results in this direction (see also \cite{H}, \cite{HN}, \cite{CNT}).

The first duality results for residual intersections were proven  by Peskine and Szpiro~\cite{PS} in the case $t=0$, the theory of liaison: if $R$ is a local Gorenstein ring and 
$J\subsetneq I$
are ideals  of the same codimension with $R/I$ Cohen-Macaulay and $J$ generated by a  regular sequence, then
$I/J$ is the canonical module of $\Rbar = R/K=R/(J:I)$. The formula for $\fC(\Rbar/A)$ in this case can be found in \cite[3.5(a)]{KW}.

For $t>0$, such results were  considered in two separate lines of work, starting about 25 years ago. In one,
Duco van Straten showed that if $J$ is one-dimensional and $t=1$, then the
module $I/J$ is self-dual. Around the same time work of Huneke and Ulrich \cite{HU},
generalizing the corresponding statement in the theory of linkage \cite{PS}, showed that,  for any $s$ and $t$, under suitable hypotheses
on $I$, the modules
$R/K$ and $I^{t+1}/JI^{t}$ are dual to one another; in particular, $I^{t+1}/JI^{t} \cong \omega_{R/K}$. The paper~\cite{CNT} gives another version of the duality, to which we will return in Section~\ref{codim2 examples}.

Comparing our Theorem~\ref{duality}, we see that the result of Huneke and Ulrich is the case $u=0$, while the result of van Straten is included in our result for $t=1$.

 Van Straten's result, cited above, appears with geometric applications in the
papers of van Straten and Warmt (\cite{SW}, \cite{W}).  Sernesi \cite{S} gives further 
geometric applications.

 \subsection*{Conjectures of van Straten and Warmt} The paper of van Straten and Warmt contains  interesting conjectures, which we were able to settle in much generalized form. The conjectures \cite[Conjecture 7.1, (1)-(3)]{SW} are essentially as follows:

\begin{conjecture}  \label{SW}
{\rm Suppose that $J$ is an ideal of codimension $g$ and dimension 1, with $s=g+1=d$ generators, in a power series ring $R= k\lbracket x_1,\dots, x_{d} \rbracket $ over a field $k$ of characteristic 0, and $I$ is the unmixed part of $J$, so that $I/J$ has finite length.
(Note that in this case van Straten's original result shows that $I/J$ is self-dual.) If $I$ is a radical ideal and $I\neq J$, then:}

\begin{enumerate}
\item The module $I/J$ is self-dual by a pairing that factors through the multiplication map $I/J \otimes I/J \to I^2/JI$.
\item The $R$-module $I^2/JI$ has a one-dimensional socle.
\item The socle of $I^2/JI$ is generated by the Jacobian determinant of the generators
of $J$.
\end{enumerate}
\end{conjecture}

Van Straten and Warmt were particularly interested in the case when $J$ is generated by the partial derivatives of a given power series $f$.

In our terms (see Section~\ref{section hypotheses}), the ideal $I$ in the conjecture satisfies the Strong Hypothesis ($G_s$ because it is reduced and the depth conditions because it is Cohen-Macaulay of dimension 1).
We give a proof of Conjecture (1) (Theorem~\ref{duality}) in a  more general setting. Conjecture (2)  was in fact
already known \cite[2.9]{U}, also in a more general setting.

As stated, Conjecture (3) is false, even for the case when  $J$ is generated by the partial derivatives of a quasi-homogeneous polynomial,
and we give a counter-example in Example~\ref{counter-example2}.
However, we prove Conjecture (3) in Theorem~\ref{Jacobian containment}, again in a more general setting,
under the additional hypothesis that $J$ is generated by
homogeneous polynomials of the same degree.

\subsection*{Acknowledgements} The results of this paper owe a great deal to the program Macaulay2 \cite{M2}, which enabled us to determine the limits of validity of many of the assertions below; some of those computations are represented by examples in the current paper. We are also grateful to Craig Huneke, whose work on residual intersections inspired and guided the whole subject.

\section{Definitions, Hypotheses and Notation}\label{section hypotheses}

Let $I$ be an ideal of codimension $g$ in a Noetherian local ring $R$. 
A proper ideal of the form
$K=J:I$ is called an $s$-residual intersection (of $I$ with respect to $J$), for some integer $s \geq g$, if
$J\subset I$ is generated by $s$ elements and
$K$ has codimension at least $s$. The ideal $K$ is said to be a \emph{geometric} $s$-residual intersection if in addition $\codim(I+K)\geq s+1.$

In order for an $s$-residual intersection of $I$ to exist, it is clearly necessary that $I$ be generated by $s$ elements locally at every prime of codimension $<s$, and for a geometric $s$-residual intersection to exist, this condition must also be satisfied at primes of codimension $s$ containing $I$. For inductive purposes, the proofs of most results in the theory require a slightly stronger hypothesis:

The ideal $I$ is said to satisfy the condition $G_{s}$ if $\mu(I_{P})\leq \codim P$ for all prime ideals $P\supset I$ with $\codim P \leq s-1$.

For example, the homogeneous ideal of any smooth variety in $\P^n$ satisfies $G_{n+1}$.

\smallbreak

The significance of the condition $G_s$ is in the following result, which allows an induction that we will use often.  \smallbreak

\begin{lemma}~\label{principal ideal}\label{general position}
Let $R$ be a Noetherian local ring with infinite residue field, and let $I\subset R$ be an ideal that satisfies $G_s$.
Let $\mathfrak a \subsetneq I$ be any ideal with $\codim (\mathfrak a :I) \geq s$. Let $a_{1},\dots, a_{s}$ be general elements of $\mathfrak a$, and set
$J_{u} = (a_{1},\dots, a_{u}),\  K_{u}=J_{u}:I$. Write $R_u= R/K_{u}$.
For $g \leq u \leq s$ the ideal
$K_{u}$ is a $u$-residual  intersection, and this residual intersection is
geometric if $u < s$.
\end{lemma}

Here, and in the rest of the paper, the notion of a set of general elements may be defined as follows. Let $R$ be a Noetherian local ring with infinite residue field $k$, and let $\mathfrak a$ be an ideal. We say that the elements $a_{1},\dots,a_{s}\subset \mathfrak a$ are general in $\mathfrak a$ if the image of the point $(a_{1}, \dots, a_{s})\in \mathfrak a^{s}$ in
$ (k\otimes_{R}\mathfrak a)^{s}
$
is general.

\begin{proof} The result follows from the theory of basic elements~\cite{EE}. For a detailed treatment, see
\cite[Section 1]{U}, and in particular \cite[1.5(ii),(iii)]{U}.
\end{proof}

Now suppose in addition that $R$ is Gorenstein. We say that $I$ satisfies the {\bf Standard Hypothesis} (respectively {\bf Weak} or {\bf Strong Hypothesis})
with respect to $s = g+t$
if $I$ satisfies $G_{s}$ and, in addition, the \emph{Depth Conditions}
$$
 \depth(R/I^{j})\geq \dim(R/I) -j+1
$$
for $j\leq t$  (respectively $j\leq t-1$ or $j\leq t+1$).
\smallbreak

For example, if
$t=1$, then the Standard Hypothesis is equivalent to the condition that $R/I$ is Cohen-Macaulay and $I$ is
generically a complete intersection. Also note that if $s = \dim R$ then the  Strong Hypothesis
is the same as the weak hypothesis, since the extra requirement is that the
depth of $R/I^{t+1}$ is $\geq 0$.
\smallbreak
Assuming that the ideal $I$ satisfies $G_{s}$, the Strong Hypothesis holds, for example,
if the Koszul homology modules $H_i(I)$ of some generating sequence of $I$ are Cohen-Macaulay in the range
$0 \leq i \leq t$ \cite[2.10]{U}; in particular it
holds for strongly Cohen-Macaulay ideals; and thus it is satisfied
by Cohen-Macaulay almost complete intersection ideals, Cohen-Macaulay ideals
of deviation 2 \cite[p.259]{AH}, and
ideals in the linkage class of a complete intersection \cite[1.11]{H1}. Standard
examples include perfect ideals of codimension 2 and perfect Gorenstein ideals of codimension 3 \cite[proof of  the only theorem]{Wa}.

The ideal of the Veronese surface in $\P^5$ satisfies the Standard hypothesis with $s=4$ and the Weak Hypothesis with $s=5$---this is the ideal that appears
in the five conics problem of Chasles~\cite{C}. (It also satisfies ``sliding depth'' for the Koszul homology, so the general residual intersection
$K:= (a_1,\dots, a_5):I$ is unmixed---see~\cite[2.3 and 3.3]{HVV}. By a Bertini argument as in the proof of Proposition~\ref{red lemma} the ideal $K$ is the homogeneous ideal of a set of reduced points.)

\section{Duality Results}\label{Sduality}

\noindent We will assume throughout this section that
$I$ is an ideal of codimension $g$ in the local Gorenstein ring $R$, and
$K=J:I$ is an $s$-residual intersection for some $s\geq g$. We set $t = s-g$. When we refer to the Standard, Weak, or Strong Hypothesis, it
will always be with respect to $s$.
\medbreak
In this section we give precise statements of our main duality results. Proofs will be found in Section~\ref{Sproofs}.

Huneke gave a simple proof of van Straten's $t=1$ result in a more general context. We include it with his gracious permission:

\begin{theorem} \label{Huneke}
Suppose that $R/I$ is Cohen-Macaulay of codimension $g$ and $J = (a_1,\dots, a_{g+1})\subset I$ is such that
$K = J:I$ has codimension $g+1$, then
the $R/K$-module $I/J$ is self-dual; that is,
$$
I/J\cong \Hom_{R}(I/J, \omega_{R/K}).
$$
\end{theorem}

\medskip

Assuming the Standard Hypothesis allows us to extend the result to higher values of $t$, and to prove a statement that is stronger even in the case $t=1$:

\begin{theorem}\label{duality} Under the Standard Hypothesis, Theorem~\ref{mu definition} applies to give an injective  map
$\mu_{t}: I^{t+1}/JI^{t}\to \omega_{R/K}$. For $1\leq u\leq t$, both the multiplication map
$$
m(I,u,t): I^{u}/JI^{u-1}\otimes I^{t+1-u}/JI^{t-u} \rTo^{\rm mult}  I^{t+1}/JI^{t}
$$
and the composition $\mu_{t}\circ m(I,u,t)$
are perfect pairings.
\end{theorem}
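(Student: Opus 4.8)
The plan is to reduce the whole statement to the already-known case $u=0$, i.e.\ to the isomorphism $\omega_{R/K}\cong I^{t+1}/JI^{t}$ from \cite[2.9]{U}, together with a ``linkage-by-$J$'' induction that peels off one power of $I$ at a time. First I would construct the map $n(I,t)$: under the Standard Hypothesis one does not quite have the Strong Hypothesis, so $I^{t+1}/JI^{t}$ need not equal $\omega_{R/K}$, but one expects a canonical injection. I would obtain it by passing to a generic link or by lifting to a situation where the Strong Hypothesis holds (adjoin variables / take a generic complete intersection inside $I$ with the same residual intersection), proving the injectivity there, and descending; the key point is that $I^{t+1}/JI^{t}$ is a faithful module of rank one over $R/K$ in codimension $\le s$, so a nonzero map to $\omega_{R/K}$ is forced to be injective because $\omega_{R/K}$ satisfies $S_2$ and the cokernel would have to be supported in codimension $>s$ yet also be a quotient of a module with no such submodule.

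The core is the claim that $m(I,u,t)$ is a perfect pairing, i.e.\ that the induced map
$$
I^{u}/JI^{u-1}\;\longrightarrow\;\Hom_{R}\!\bigl(I^{t+1-u}/JI^{t-u},\,I^{t+1}/JI^{t}\bigr)
$$
is an isomorphism. I would prove this by induction on $t$, the base case $t=0$ being the classical linkage statement and, more usefully here, the base case $u=0$ (any $t$) being exactly the Huneke--Ulrich duality recalled in the introduction. For the inductive step I would use the short exact sequences coming from multiplication by a generic element, together with the Artin--Nagata style results that guarantee, under $G_s$ and the depth conditions (*), that the associated graded ring of $I$ (or the Rees-type modules $I^{j}/JI^{j-1}$) behaves well: specifically that each $I^{j}/JI^{j-1}$ is a maximal Cohen--Macaulay $R/K$-module for $0\le j\le t+1$, and that the sequence $R/K,\ I/J,\ I^{2}/JI,\dots,I^{t+1}/JI^{t}$ is, in an appropriate sense, a ``cyclic'' chain under multiplication. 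Granting that each term is a maximal Cohen--Macaulay $R/K$-module of rank one (in the geometric case) and applying $\Hom_{R/K}(-,\omega_{R/K})$, the functor is exact on this chain, and the multiplication maps dualize to multiplication maps; one then checks the composite $I^{u}/JI^{u-1}\otimes I^{t+1-u}/JI^{t-u}\to I^{t+1}/JI^{t}\xrightarrow{n}\omega_{R/K}$ agrees with the evaluation pairing, which is perfect by definition of $\omega_{R/K}$ and maximal Cohen--Macaulayness. That $n(I,t)\circ m(I,u,t)$ is perfect then follows formally once $n$ is injective and $m$ is a perfect pairing of MCM $R/K$-modules, because an injection of an MCM module into the canonical module with cokernel of positive codimension must in fact be an isomorphism after localizing at every minimal prime of $K$, which suffices.

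Concretely, the steps in order: (1) establish that $I^{j}/JI^{j-1}$ is a maximal Cohen--Macaulay $R/K$-module for $0\le j\le t+1$, using the depth conditions (*) and a standard induction with the exact sequences $0\to I^{j+1}/JI^{j}\to I^{j}/JI^{j-1}\to I^{j}/I^{j+1}\to 0$ and the Artin--Nagata lemma; (2) construct $n(I,t)$ and prove injectivity by reduction to the Strong Hypothesis; (3) identify the multiplication map $m(I,u,t)$ after applying $\Hom_{R/K}(-,\omega_{R/K})$ with another such multiplication map, using the isomorphism from (2) and the MCM property to commute $\Hom$ with the relevant exact sequences; (4) deduce by the induction on $u$ (anchored at $u=0$ via Huneke--Ulrich) that $m(I,u,t)$ is a perfect pairing; (5) conclude the perfectness of $n\circ m$.

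The main obstacle I anticipate is step (3)--(4): controlling the $\Hom$ into $I^{t+1}/JI^{t}$ without yet knowing it is the canonical module. One cannot simply invoke $\omega_{R/K}$-duality until $n$ is shown injective, and even then one must check that the evaluation pairing on MCM modules over the (possibly non-Cohen--Macaulay, only $S_2$ away from codimension $s$) ring $R/K$ really does match the multiplication pairing --- this is where the hypothesis $G_s$ and the genericity of the $s$ generators of $J$ do the essential work, via an Artin--Nagata argument ensuring that $R/K$ is Cohen--Macaulay and that all the modules in sight are MCM of rank one on each component. Verifying compatibility of the pairings will require a careful local computation at the generic points of $K$, where everything becomes a complete intersection and the classical self-duality of linkage applies.
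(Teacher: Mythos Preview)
Your proposal has a genuine gap in its foundation. Under the Standard Hypothesis alone, $R/K$ is only unmixed, not Cohen--Macaulay, and $I^{t+1}/JI^{t}$ need not be maximal Cohen--Macaulay over $R/K$; both of these require the \emph{Strong} Hypothesis (see Lemma~\ref{basic}). Thus your step (1), which claims MCM-ness for $0\le j\le t+1$, is false as stated, and the whole apparatus of $\Hom_{R/K}(-,\omega_{R/K})$-duality on MCM modules that you invoke in steps (3)--(5) is not available. The base case $u=0$ of your induction is likewise unavailable: the Huneke--Ulrich isomorphism $\omega_{R/K}\cong I^{t+1}/JI^{t}$ is only known under the Strong Hypothesis.

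The paper's proof avoids this by first \emph{reducing to the case $d=s$}, cutting down by a general regular sequence $x_{1},\dots,x_{d-s}$ and showing that perfectness of the pairings can be checked after this reduction (Lemma~\ref{photo}). When $d=s$ the extra condition $\depth R/I^{t+1}\ge 0$ is vacuous, so the Strong Hypothesis holds for free, $R/K$ is Artinian, and $I^{t+1}/JI^{t}\cong\omega_{R/K}$. In this Artinian setting the argument is a length count: one proves the colon-ideal containment $JI^{t}:I^{u}\subset JI^{t-u}$, which makes the multiplication-induced map $I^{u}/JI^{u-1}\to\Hom(I^{t+1-u}/JI^{t-u},\omega_{R/K})$ injective, and then the symmetric chain of length inequalities forces equality. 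The containment itself is proved by induction on $t$ (not on $u$): choosing the generators of $J$ so that $J_{s-1}:I$ is a geometric $(s-1)$-residual intersection (Lemma~\ref{principal ideal}), one passes to $\bar R=R/(J_{s-1}:I)$, where $J$ becomes the principal ideal $(a_{s})$ with $a_{s}$ a nonzerodivisor; the induction hypothesis for $\bar I$ at level $t-1$ gives $\bar I^{t}:_{Q(\bar R)}\bar I^{u}=\bar I^{t-u}$, and one lifts back using Lemma~\ref{basic}(3) together with the fact that $\gr_{I}(R)$ is Cohen--Macaulay on the punctured spectrum.

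Your proposed induction on $u$ has no visible inductive step: there is no short exact sequence relating the pairing at level $u$ to that at level $u-1$ which one can dualize. The paper's induction on $t$, by contrast, acquires exactly such a mechanism through the passage to the principal-$J$ situation over $\bar R$.
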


If in addition $I$ satisfies the Strong Hypothesis, then the duality of Theorem~\ref{duality} holds in the full range
$0 \leq u \leq t+1$.
Here, when $u=0$, we interpret $I^{u}/JI^{u-1}$ as $R/(J:I)=R/K$, and the statement is simply that
$I^{t+1}/JI^t \cong \omega_{R/K}$
and $R/K \cong \hbox{End}(\omega_{R/K})$, which follows from Theorem~\ref{basic 1}.

Note that the hypothesis of Theorem~\ref{Huneke} does not include the condition $G_s$; on the other hand, Example~\ref{mystery module} shows that
the duality asserted in Theorem~\ref{Huneke} does not necessarily come from the multiplication map as in Theorem~\ref{duality}. Examples suggest that the weaker result should also be true with a condition weaker than $G_{s}$:

\begin{conjecture}\label{optimal duality}
The duality
$$
I^{u}/JI^{u-1}\cong \Hom_{R} (I^{t+1-u}/JI^{t-u}, \omega_{R/K})
$$
holds for $1 \leq u \leq t \,$ if $K=J:I$ is an $s$-residual intersection and $I$  satisfies a weakened Standard Hypothesis with $G_{s}$ replaced by $G_{s-1}$.
\end{conjecture}

The conjecture is immediate in the case where $R$ is regular and $g=1$:
then $I = (G)$ is principal, and $J = (GF)$, where $F$ is a regular sequence (of length $s$).
In this case the pairings all reduce to the usual
isomorphisms $R/(F) \to \Hom(R/(F), R/(F))$ induced by multiplication.
We will prove the Conjecture under an additional assumption in Corollary~\ref{almost optimal} of Theorem~\ref{general duality} below.

The condition $G_{s}$ in the Strong, Standard, and Weak Hypotheses is used in the inductive proof of many theorems about residual intersections, but it is not clear why it should be necessary. Recent work (\cite{H}, \cite{HN}, \cite{CNT}) has aimed at removing this hypothesis, and has had  success in the case when $I$ is strongly Cohen-Macaulay. In particular, Chardin et al \cite{CNT} have proved an analogue of Theorem~\ref{duality} in this setting, replacing the modules
$I^{u}/JI^{u-1}$ with the modules $\Sym_{u}(I/J)$. In Section~\ref{codim2 examples} we will see that this statement does not extend too far beyond the strongly Cohen-Macaulay case; see Examples~\ref{can't drop deformation} and \ref{Johnson}.

Under the Strong Hypothesis we can
combine all the dualities of Theorem~\ref{duality}
in the statement that a certain quotient of the Rees algebra $R[Iz]$ of $I$ is Gorenstein:

\begin{corollary}\label{Rees is Gor}
Under the Strong Hypothesis, the ring
\begin{align*}
\RR &:= R/K \oplus I/J \oplus I^{2}/JI \oplus \ \ldots \ \oplus I^{t+1}/JI^{t} \\
&= R[Iz]/(K, Jz, (Iz)^{t+2})
\end{align*}
is Gorenstein.
\end{corollary}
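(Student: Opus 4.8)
The plan is to deduce Corollary~\ref{Rees is Gor} from Theorem~\ref{duality} together with the isomorphism $\omega_{R/K}\cong I^{t+1}/JI^{t}$ of \cite[2.9]{U}, which is available under the Strong Hypothesis. First I would observe that $\RR$ is a standard graded algebra over the Artinian-dimension quotient $R/K$ (more precisely, a finitely generated graded $R/K$-algebra generated in degree $1$), with top graded piece $\RR_{t+1}=I^{t+1}/JI^{t}$, and that it is Cohen--Macaulay: each graded component $I^{u}/JI^{u-1}$ is a maximal Cohen--Macaulay $R/K$-module, since under the Strong Hypothesis $R/K$ is Cohen--Macaulay of dimension $d-s$ and the depth conditions force $\depth_{R}(I^{u}/JI^{u-1})\geq d-s$ for $0\leq u\leq t+1$ (this is exactly the input that makes the residual-intersection machinery of \cite{U} run). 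Hence $\RR$ is a Cohen--Macaulay ring of dimension $d-s$.

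Next I would identify the graded canonical module of $\RR$ and show it is isomorphic, as a graded $\RR$-module, to $\RR$ itself with a degree shift. Since $\RR$ is module-finite over the Gorenstein (indeed Cohen--Macaulay) base and is a quotient of $R[Iz]$, its canonical module can be computed componentwise; the key point is that $\omega_{\RR}$ in internal degree $u$ is, up to the global shift by $t+1$, the $R/K$-dual $\Hom_{R/K}(\RR_{t+1-u},\omega_{R/K})$ of the ``complementary'' graded piece. Now invoke Theorem~\ref{duality}: the composite $n(I,t)\circ m(I,u,t)$ is a perfect pairing $I^{u}/JI^{u-1}\otimes I^{t+1-u}/JI^{t-u}\to\omega_{R/K}$ for $1\le u\le t$, and the edge cases $u=0$ and $u=t+1$ are handled by the injection $n(I,t)\colon I^{t+1}/JI^{t}\to\omega_{R/K}$, which under the Strong Hypothesis is the isomorphism of \cite[2.9]{U}, so that $\RR_{0}=R/K$ pairs perfectly with $\RR_{t+1}$. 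Collating these perfect pairings over all $u$ gives an isomorphism of graded $R/K$-modules $\RR\cong\Hom_{R/K}(\RR,\omega_{R/K})(-(t+1))=\omega_{\RR}(-(t+1))$.

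The final step is to promote this isomorphism of graded $R/K$-modules to an isomorphism of graded $\RR$-modules, i.e.\ to check $\RR$-linearity. This is where the hypothesis that the duality in Theorem~\ref{duality} is induced by \emph{multiplication} is essential: compatibility of the pairings $m(I,u,t)$ with the multiplication maps $I^{a}/JI^{a-1}\otimes I^{u}/JI^{u-1}\to I^{a+u}/JI^{a+u-1}$ is precisely the associativity of multiplication inside the Rees algebra, restricted to $\RR$. Thus the family $\{n(I,t)\circ m(I,u,t)\}_u$ assembles into a single perfect $\RR$-bilinear pairing $\RR\otimes_{R/K}\RR\to\omega_{R/K}$ landing in top degree, which is the statement that $\RR$ is a Gorenstein graded ring with one-dimensional socle in degree $t+1$. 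I expect the main obstacle to be the bookkeeping for the edge cases and the verification that ``perfect pairing in each degree, compatible with multiplication'' really does yield $\omega_{\RR}\cong\RR$ up to shift rather than merely a componentwise duality; once the $\RR$-module structure on $\omega_{\RR}$ is correctly pinned down via local duality over $R$ (using that $R$ is Gorenstein and $\RR$ is a finite $R$-algebra), the rest is formal. One should also record that when $R$ is local the resulting $\RR$ is local Gorenstein, and when $R$ is graded $\RR$ is graded Gorenstein, which is immediate from the graded/local dichotomy already built into the Standard Hypothesis.
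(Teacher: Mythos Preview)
Your proposal is correct and follows essentially the same approach as the paper: reduce the Gorenstein property of the finite standard-graded $R/K$-algebra $\RR$ to (i) Cohen--Macaulayness of each graded piece (from Lemma~\ref{basic}), (ii) the identification $\RR_{t+1}\cong\omega_{R/K}$ under the Strong Hypothesis (Lemma~\ref{canonical comparison} or \cite[2.9]{U}), and (iii) the perfect multiplication pairings of Theorem~\ref{duality}. The paper states this reduction more tersely as a three-item equivalence and does not spell out the $\RR$-linearity check that you rightly flag as coming from associativity of multiplication, but the substance is identical; one small slip is your phrase ``Artinian-dimension quotient'' for $R/K$, which has dimension $d-s$ and is not Artinian in general.
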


As an application of Theorem~\ref{duality} and Corollary~\ref{Rees is Gor} we will deduce:

\begin{theorem}\label{associated graded}
In addition to the Strong Hypothesis, suppose that $K=J:I$ is a geometric $s$-residual intersection.
\begin{enumerate}
\item Let $\Ibar\subset \Rbar:=R/K$ be the image of $I$.
The truncated Rees algebra
$$
\Rbar \oplus \Ibar \oplus \Ibar^{2}\oplus\ \ldots \ \oplus \Ibar^{t+1}
$$
is Gorenstein. In particular, $\Ibar^{t+1}\cong \omega_{\Rbar}$ and the multiplication maps
$\Ibar^{u}\otimes \Ibar^{t+1-u}\to \Ibar^{t+1}$ are perfect pairings.

\item Let $I'\subset R' := R/(K+I^{t+1})$ be the image of $I$. The associated graded ring
$\gr_{I'}(R')$ is Gorenstein.
\end{enumerate}
\end{theorem}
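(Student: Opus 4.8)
The plan is to deduce Theorem~\ref{associated graded} from Corollary~\ref{Rees is Gor} together with standard facts about geometric residual intersections, reducing everything to a comparison between the abstract truncated Rees algebra $\RR$ of Corollary~\ref{Rees is Gor} and its ``geometric'' counterpart built from the honest powers $\Ibar^u$ inside $\Rbar = R/K$. Concretely, for part (1) I would first observe that there is a surjective graded $\Rbar$-algebra map
$$
\RR = R/K \oplus I/J \oplus I^{2}/JI \oplus \cdots \oplus I^{t+1}/JI^{t} \;\longrightarrow\; \Rbar \oplus \Ibar \oplus \Ibar^{2} \oplus \cdots \oplus \Ibar^{t+1},
$$
induced in degree $u$ by the evident map $I^{u}/JI^{u-1} \twoheadrightarrow \Ibar^{u} = (I^{u}+K)/K$, whose kernel in degree $u$ is $(JI^{u-1} + (I^{u}\cap K))/JI^{u-1} = (I^u\cap K)/JI^{u-1}$. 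So the whole point is to show $I^{u}\cap K = JI^{u-1}$ for $1 \le u \le t+1$ when $K$ is a \emph{geometric} $s$-residual intersection; this is exactly the content of the known results on geometric residual intersections (e.g.\ the statements in \cite{U}, \cite{HU}) under the Strong Hypothesis, and I would cite those. Granting this, the surjection is an isomorphism, so the geometric truncated Rees algebra is isomorphic to $\RR$, hence Gorenstein by Corollary~\ref{Rees is Gor}. The ``equivalently'' clause is then just the translation of ``the truncated Rees algebra is Gorenstein'' into the top-degree/socle language: a standard graded (or $\mathbb{Z}_{\ge 0}$-graded, finitely generated) Gorenstein ring which is a finite module over its degree-$0$ part $\Rbar$, and which vanishes above degree $t+1$, is self-dual with $\omega$ concentrated in the top degree, forcing $\Ibar^{t+1} \cong \omega_{\Rbar}$ and perfectness of the pairings $\Ibar^u \otimes \Ibar^{t+1-u} \to \Ibar^{t+1}$; I would phrase this via $\omega_{\RR} \cong \RR(\text{shift})$ and $\Hom_{\Rbar}(-,\omega_{\Rbar})$ applied degreewise.

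For part (2), the plan is to pass from $\Rbar = R/K$ to $R' = R/(K + I^{s-g+1}) = \Rbar/\Ibar^{t+1}$ and identify the associated graded ring $\gr_{I'}R'$ with a shifted/truncated version of the Rees algebra from part (1). Since $\Ibar^{t+1} \cong \omega_{\Rbar}$ is (under a geometric residual intersection, where $\Rbar$ is Cohen--Macaulay) a canonical module, killing it and taking $\gr$ should again produce a Gorenstein ring of one less ``length''; more precisely I expect
$$
\gr_{I'}R' = \bigoplus_{u\ge 0} I'^{u}/I'^{u+1} \;\cong\; \bigoplus_{u=0}^{t} \Ibar^{u}/\Ibar^{u+1},
$$
using $I'^{u} = (\Ibar^u + \Ibar^{t+1})/\Ibar^{t+1} = \Ibar^u/\Ibar^{t+1}$ for $0\le u\le t+1$ and the fact that $\Ibar^{t+1}\cong\omega_{\Rbar}$ has the right depth/dimension so that the Rees algebra $\Rbar\oplus\Ibar\oplus\cdots$ being Gorenstein descends to $\gr$ being Gorenstein after quotienting out the top (socle) degree. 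One clean way to see this is: the truncated Rees algebra $\RR$ of part (1) is Gorenstein of dimension $\dim\Rbar = d-s$ with socle in degree $t+1$; its quotient by the ideal generated by $\Ibar^{t+1}$ (the socle module) is $\gr_{I'}R'$, and a Gorenstein graded ring modulo its one-dimensional socle in top degree is still Gorenstein — here I would need the relevant regular-sequence/linkage bookkeeping to guarantee that $\Ibar^{t+1}$ really is (up to the $\Rbar$-module structure) the socle and that the quotient has the expected dimension and is again Cohen--Macaulay.

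The main obstacle I anticipate is precisely the intersection identity $I^{u}\cap K = JI^{u-1}$ for all $u$ in the relevant range (equivalently, that the natural map $\RR \to \bigoplus \Ibar^u$ is injective, i.e.\ that the abstract truncated Rees algebra of Corollary~\ref{Rees is Gor} already ``is'' the geometric one). This is where the \emph{geometric} hypothesis $\codim(I+K)\ge s+1$ is essential — it is what rules out embedded contributions to $I^u\cap K$ — and it is exactly the kind of statement proved in the residual-intersection literature (\cite{HU}, \cite{U}, \cite{AH}); I would extract the precise form needed from there rather than reprove it. A secondary, more routine obstacle is the homological bookkeeping in part (2): checking that $R' = \Rbar/\Ibar^{t+1}$ has dimension $d-s$ as well (it does, since $\Ibar^{t+1}\cong\omega_{\Rbar}$ has full dimension), that $\Rbar$ is Cohen--Macaulay so that $\omega_{\Rbar}$ behaves well, and that the quotient of the Gorenstein ring $\RR$ by the socle-generating module $\Ibar^{t+1}$ is Gorenstein — this last point uses that killing a faithful-up-to-support copy of the canonical module of a Cohen--Macaulay graded algebra, sitting in the top graded piece, yields a Gorenstein ring, which again should be available from the theory of linkage/reduction but needs to be stated carefully with the grading conventions in force.
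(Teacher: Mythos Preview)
Your approach to part (1) is correct and matches the paper's: the paper also observes that under the geometric hypothesis one has $I^{u}\cap K = JI^{u-1}$ for $1\le u\le t+1$ (this is Lemma~\ref{basic}(3)), which makes the natural surjection $\RR\to \Rbar\oplus\Ibar\oplus\cdots\oplus\Ibar^{t+1}$ an isomorphism, and then invokes Corollary~\ref{Rees is Gor}.

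Part (2), however, has a genuine gap. Your claim that ``the quotient [of $\RR$] by the ideal generated by $\Ibar^{t+1}$ is $\gr_{I'}R'$'' is false: the ideal generated by the top graded piece $\RR_{t+1}$ is just $\RR_{t+1}$ itself, so $\RR/\RR_{t+1}=\Rbar\oplus\Ibar\oplus\cdots\oplus\Ibar^{t}$, a shorter truncated Rees ring, \emph{not} the associated graded ring $\bigoplus_{u=0}^{t}\Ibar^{u}/\Ibar^{u+1}$. These are different rings (the first contains $\Rbar$ in degree $0$, the second contains $\Rbar/\Ibar$), so no amount of bookkeeping will make this identification work. Moreover, the heuristic ``a Gorenstein graded ring modulo its socle in top degree is still Gorenstein'' is false already for artinian algebras: $k[x,y]/(x^{2},y^{2})$ is Gorenstein with socle generated by $xy$, but the quotient $k[x,y]/(x^{2},xy,y^{2})$ has $2$-dimensional socle.

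The paper handles part (2) via a separate general result, Proposition~\ref{gor from gor}: if $R$ is local Cohen--Macaulay and $I$ has positive codimension, then Gorensteinness of the truncated Rees ring $A=\RR(I)/\RR(I)_{\ge t+2}$ implies Gorensteinness of the truncated associated graded ring $B=\gr_{I}(R)/\gr_{I}(R)_{\ge t+1}$. The proof is not by modding out a single piece but by showing $\omega_{B}$ is cyclic: one first checks $B$ is Cohen--Macaulay of dimension $\dim R-1$, then uses the split sequence $0\to A_{\ge 1}\to A\to R\to 0$ to get a surjection $\omega_{A}\twoheadrightarrow\Hom_{R}(A_{\ge 1},\omega_{R})$, and finally uses the sequence $0\to A_{\ge 1}(1)\to A/A_{t+1}\to B\to 0$ together with maximal Cohen--Macaulayness of $A/A_{t+1}$ to get a surjection $\Hom_{R}(A_{\ge 1},\omega_{R})(-1)\twoheadrightarrow\Ext^{1}_{R}(B,\omega_{R})=\omega_{B}$. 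The geometric hypothesis enters to guarantee $\Ibar$ has positive codimension in $\Rbar$, so that Proposition~\ref{gor from gor} applies. You would need to supply an argument of this shape; the ``mod out the top'' shortcut does not work.
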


\vspace{0.1cm}

Sometimes the duality statements of Theorem~\ref{duality} hold only for
a restricted range of values of $u$. Our most general result involves another definition: We say that
a pair $(\tR, \tI)$ consisting of a Noetherian local ring $\tR$ and an ideal $\tI$ is a \emph{deformation} of the
pair $(R,I)$ if $\tR$ contains a regular sequence $x_{1},\dots, x_{n}$ whose image in $\tR/\tI$ is also a regular sequence
such that $R \cong \tR/(x_{1}, \dots, x_{n})$ and $I = \tI R$.

\begin{theorem}\label{general duality}
Suppose that $(R,I)$ has a deformation
$(\tR,\tI)$ such that $\tI$ satisfies the condition $G_{s}$ and
the Koszul homology $H_{i}(\tI)$ is Cohen-Macaulay for $0\leq i\leq t = s-g$.
Assume further that  $I$ satisfies the condition $G_{g+v}$ for some $(t-1)/2\leq v\leq t$.

Let $\tJ$ be a lifting of
$J$ to an ideal with $s$ generators contained in $\tI$. 
The ideal
$\tK = \tJ:\tI$ is an $s$-residual intersection of $\tI$. 
Our hypothesis implies that Theorem~$\ref{mu definition}$ holds with $\tK$ in place of $K$
and gives an isomorphism $\mu_{t}$.
The inverse $\phi: \omega_{\tR/\tK}\to \tI^{t+1}/\tJ\tI^{t}$  of  $\mu_{t}$
 induces a map $\phi': \omega_{R/K}\to I^{t+1}/JI^{t}$. We have$:$

\begin{enumerate}
\item $\phi'$ is a surjection, and is an isomorphism if $K$ is a geometric $s$-residual intersection.
\item There are perfect pairings
$$
m: I^{u}/JI^{u-1}\otimes I^{t+1-u}/JI^{t-u} \rTo \omega_{R/K}
$$
for
$$
t-v \leq u \leq v+1
$$
or, equivalently, for
$$
\frac{t+1}{2} -\varepsilon \leq u \leq \frac{t+1}{2} +\varepsilon \, ,
$$
where $\varepsilon = v-(t-1)/2$.

\item If the perfect pairing $m$ is chosen as in Figure $1$ in the proof, then
$\phi'\circ m$ is the map induced by multiplication $I^{u}\otimes I^{t+1-u}\to I^{t+1}.$
\end{enumerate}
\end{theorem}

Under the hypotheses of Theorem~\ref{general duality}, the ideal $\tI$ statisfies the Strong Hypothesis (\cite[2.10]{U}). Thus
$\tR/\tK$ is Cohen-Macaulay with canonical module $\tI^{t+1}/\tJ\tI^{t}$ by Theorem~\ref{basic 1}. From the proofs below it follows that
the map $\phi'$ can also be described as a composition
$$
\omega_{R/K}\hookrightarrow R\otimes_{\tR}\omega_{\tR/\tK} \rTo^{R\otimes_{\tR}\phi} R\otimes_{\tR} \tI^{t+1}/\tJ\tI^{t} \twoheadrightarrow I^{t+1}/JI^{t}.
$$

We remark that all the hypotheses of Theorem~\ref{general duality} are satisfied when $I$ is licci and satisfies
$G_{g+v}$ (\cite[1.11]{H1} and \cite[proof of 5.3]{HU}). We will see  that the $G_{g+v}$ assumption cannot be weakened to $G_{g+v-1}$,
 even when $I$ is a codimension 2 perfect ideal (Example~\ref{can't weaken G}),
and also that the deformation assumption cannot be dropped, even when $I$ satisfies $G_{s}$ (Examples~\ref{mystery module bis} and \ref{can't drop deformation}).

Applying Theorem~\ref{general duality} with $g+v = s-1$ we obtain a result extending Theorem~\ref{Huneke} under the additional hypothesis that the pair
$(R, I)$ admits a ``good'' deformation:

\begin{corollary}\label{almost optimal}
Assume that $(R,I)$ has a deformation
$(\tR,\tI)$ such that $\tI$ satisfies the condition $G_{s}$ and
 the Koszul homology $H_{i}(\tI)$ is Cohen-Macaulay for $0\leq i\leq t$.
If $I$ satisfies $G_{s-1}$, then
$$
I^{u}/JI^{u-1}\cong \Hom_{R} (I^{t+1-u}/JI^{t-u}, \omega_{R/K})
$$
for $1\leq u\leq t$.

\smallskip

\end{corollary}

\medskip

\section{Preliminary results}
\def\Q{{\rm Q}}

We will repeatedly use a number of results from \cite{U}. For the reader's convenience we gather them here. Unless stated otherwise,
we will assume that $R$ is a
local Gorenstein ring of dimension $d$, $I$ is an ideal of codimension $g$, and
$K = J:I$ is an $s$-residual intersection of $I$ for some $s \geq g$. As before, we write $t=s-g$, and when referring
to the Weak, Standard, or Strong Hypotheses, we mean that these hypotheses hold with respect to
$s$.
\begin{theorem}\label{basic 1} If $I$ satisfies the Strong Hypothesis,
then
$R/K$ is
Cohen-Macaulay of codimension $s$ with $\omega _{R/K} \cong I^{t+1}/JI^{t}$.
\end{theorem}
\begin{proof} This is
 \cite[2.9]{U}.
\end{proof}

\vspace{0.0cm}

\begin{proposition}\label{codimension}
If $I$ satisfies the Standard Hypothesis, then the ideal $K$ is unmixed
of codimension exactly $s$.
\end{proposition}

\begin{proof}
One uses Theorem \ref{basic 1} and  \cite[1.7(a)]{U}.
\end{proof}

\vspace{0.0cm}

\begin{proposition}\label{new basic}
 Let $J= (a_{1}, \dots, a_{s})$. For $0 \leq u\leq s$ write $J_{u} = (a_{1}, \dots, a_{u}),\ K_{u} = J_{u}:I, R_{u}= R/K_{u},$
 and assume that $K_{u}$ is a geometric $u$-residual intersection of $I$ whenever $g \leq u<s$.

\begin{enumerate}
 \item If $I$ satisfies the Weak Hypothesis and $u \geq 1$, then the element $a_{u}$ is regular on $R_{u-1}$ and
 $K_{u}R_{u-1} = a_{u}R_{u-1}:IR_{u-1}$.

\item If $I$ satisfies the Standard Hypothesis and $2 \leq j \leq t+1 \, $; or if
 $I$ satisfies the Strong Hypothesis and $2 \leq j \leq t+2 \, $;
 then there are exact sequences
 $$
 0\rTo \frac{I^{j-1}}{J_{u-1}I^{j-2}} \rTo^{a_{u}} \frac{R}{J_{u-1}I^{j-1}} \rTo\frac{R}{J_{u}I^{j-1}} \rTo 0
$$
for $u \geq 1$.

\item If $I$ satisfies the Standard Hypothesis and $1 \leq j \leq t \, $; or if
 $I$ satisfies the Strong Hypothesis and $1 \leq j \leq t+1 \, $;
 then
$$\depth I^j/J_uI^{j-1} \geq \min\{d-u, \dim R/I -j+2\} \, .$$
In particular, $I^{j}/J_{u}I^{j-1}$ is a maximal Cohen-Macaulay $R_{u}$-module if
in addition $j \leq u-g+2$.

\item If $I$ satisfies the Standard Hypothesis and $1 \leq j \leq t+1 \, $; or if
 $I$ satisfies the Strong Hypothesis and $1 \leq j \leq t+2 \, $;
 then
$$\depth R/J_uI^{j-1} \geq \min\{d-u, \dim R/I -j+2\} \, .$$

 \item If $I$ satisfies the Standard Hypothesis, $u < s$, and $1 \leq j \leq t +1\, $; or if
 $I$ satisfies the Strong Hypothesis, $K$ is a geometric $s$-residual intersection, and $1 \leq j \leq t+2 \, $;
 then
 $$I^{j}\cap K_{u} = J_{u}I^{j-1} .$$
\end{enumerate}
\end{proposition}

\begin{proof} First notice that if $u<g$ then $K_u=J_u$ is generated by the regular sequence
$a_1, \ldots, a_u$. Now
part (1) follows from Theorem \ref{basic 1} and \cite[1.7 (a),(c)]{U},
part (2) is a consequence of Theorem \ref{basic 1} and \cite[2.7(a)]{U}, and
item (3) follows from Theorem \ref{basic 1} and \cite[2.7(b)]{U}.

We now prove (4). The assertion for $j=1$ follows from Theorem \ref{basic 1} and \cite[1.7(b)]{U}.
Thus we may assume that $j \geq 2$.
We show part (4) by induction on $u$. The assertion is obvious
for $u=0$. If $1 \leq u \leq s$, we apply the exact sequence
of part (2), the depth estimate of part (3), and the induction hypothesis.

If
$I$ satisfies the Strong Hypothesis, $K$ is a geometric residual intersection and $1 \leq j \leq t+2$, 
then
part (5) follows from Theorem \ref{basic 1} and \cite[2.7(c)]{U}. 
If on the other
hand $I$ satisfies the Standard Hypothesis, $u<s$ and $1 \leq j \leq t+1$, then $I$ satisfies
the Strong Hypothesis with respect to $s-1$, $K_{s-1}$ is a geometric residual intersection and $1 \leq j \leq (s-1-g)+2$, so
the assertion follows from the previous case.
\end{proof}

\vspace{0.0cm}

\begin{proposition}\label{red lemma}
Suppose that $R$ satisfies Serre's condition $(R_{s-1})$ and contains a field of characteristic 0. Let $\seq a {s-1}$ be general elements of $J$. Set $J_{u} = (\seq a u)$, $K_{u}=J_{u}:I$, and $R_{u} = R/K_{u}$. If $R$ is reduced and $I$ satisfies
the Weak Hypothesis with respect to $s$, then the factor ring $R_{u}$ is reduced and equidimensional of
codimension $u$ for every $u<s$.
\end{proposition}

 \def\Q{{\rm Q}}
\begin{proof} Again, if $u < g$ then $K_{u}=J_{u}$ is generated by the regular sequence $a_{1}, \ldots, a_{u}$.
If $g \leq u < s$ then $K_{u}$ is a geometric $u$-residual intersection by Lemma \ref{principal ideal}
and hence this ideal is unmixed of codimension $u$ according to Proposition \ref{codimension} because
$I$ satisfies the Standard Hypothesis with respect to $u$.
In either case, $K_{u}$ is unmixed of codimension $u$ and $I$ is not contained in any of the minimal primes
of $K_u$.

Let $P$ be any of these minimal primes. To show that $R_{u}$ is reduced it suffices to prove that the ring
$(R_u)_P$ is regular.
Since $\codim(J:I)\geq s > u = \codim P$ and $P$ does not contain $I$, it follows that $P$ cannot contain $J$
either. Since the elements $a_{1}, \ldots, a_{u}$ are general in $J$, the
 local Bertini Theorems (\cite[4.6]{F})  show that  $(R/(a_{1}, \ldots, a_{u}))_{P}$  is regular.  But this ring is
$(R_u)_P$, again since $P$ does not contain $I$.
\end{proof}

\vspace{0.0cm}

\section{Connecting the canonical module with powers of $I$}
We next explain the maps that connect powers of $I$ with
the canonical module, refining Theorem~\ref{basic 1}. As we shall see, these maps are defined under a certain assumption
that is satisfied under the Standard Hypothesis, but also in some cases of geometric residual intersections.
Unless stated otherwise, the general assumptions of Section 3 are still in effect.

\def\o{{\omega}}

\begin{theorem}\label{mu definition} Let $a_{1},\dots, a_{s}$ be generators of $J$ and,
for $0 \leq u\leq s$, let $J_{u}=(a_{1},\dots, a_{u})$ and  $K_{u} = J_{u}:I$.
Assume that $\codim ((K_{u-1},a_{u}):I) = u$ whenever $1 \leq u \leq s$ and that
$\codim(I+K_{u}) \geq u+1$ whenever $0 \leq u < s$.

For every $u$ with $0 \leq u \leq s$ one has $\codim K_u =u$. Set
$R_{u} = R/K_{u}$ and $R'_u = R/K'_u$, where $K'_u$ denotes the unmixed part of $K_u$ of codimension $u$.
For every $u$ with $g \leq u \leq s$, there are maps
$$
\frac{I^{u-g+1}}{J_{u}I^{u-g}}\rTo^{\mu_{u-g}} \omega_{R_u}
$$
defined inductively$:$

\begin{enumerate}
\item{$u=g:$\ } $\mu_{0}$ is the map induced by the inclusion of $I$ into the double annihilator
$$
I/J_g \hookrightarrow  \frac{ \left(J_g: \bigl(J_g: I\bigr)\right)}{J_g}  =  \omega_{R_{g}}
$$
\item{$s\geq u>g:$\ } $\mu_{u-g}$ is the map obtained from $\mu_{u-g-1}$ and an embedding
$$
I\o_{R_{u-1}}/a_{u}\o_{R_{u-1}} \hookrightarrow \o_{R_{u}}
$$
obtained from the diagram of homomorphisms of rings
\begin{diagram}
R_{u-1}&\rOnto^{\pi_{1}}&R'_{u-1}&\rOnto^{\pi_{2}}&R'_{u-1}/(a_{u})\\
&&&&\uOnto^{\pi_{3}}\\
&&&&R_{u-1}/(a_{u})&\rOnto^{\pi_{4}} R_{u-1}/((a_{u}):IR_{u-1})\\
&&&& &\uOnto^{\pi_{5}}\\
&&&&&R_u 
\end{diagram}
as explained in the proof.
\end{enumerate}
If $I$ satisfies the Standard Hypothesis with respect to $s$, then the map $\mu_{s-g}$ is an injection, while if $I$ satisfies the Strong Hypothesis
with respect to $s$, then $\mu_{s-g}$ is an isomorphism.
\end{theorem}

 \begin{proof} We first show that $\codim K_u=u$ for $0 \leq u \leq s$ and we compute
 the codimensions of all the rings in the diagram.

Since the codimension of the ideal $K_0$ is obviously $0$, we assume that $1 \leq u \leq s$.
By Lemma~\ref{funny hypothesis},
the codimension of $K_u$ is at least $u$.
As $K_{u}\subset (K_{u-1}, a_{u}):I$ and the second ideal has codimension $u$ by hypothesis, we see that
the codimensions of the two ideals are exactly $u$. Thus, the rings $R_{u-1}$ and $R'_{u-1}$ have codimension $u-1$
in $R$, and the rings $R_u$ and $R_{u-1}/((a_{u}):IR_{u-1})$ have codimension $u$.
 
 We now claim that $a_{u}$ is not in any codimension $u-1$ prime $P$ containing $K_{u-1}$. Since $I+K_{u-1}$ has codimension $\geq u$ by hypothesis, we have $I\not \subset P$,
and since $K_u$ has codimension $u$, we see that
$I_{P} = (J_{u})_{P}$ and therefore $J_u \not\subset P$. As $J_{u-1}\subset P$, it follows that $a_{u}\notin P$. From this we see that the rings
$R_{u-1}/(a_{u})$ and $R'_{u-1}/(a_{u})$ have codimension $u$, and moreover $a_{u}$ is a nonzerodivisor on $R'_{u-1}$.

We take the map $\mu_{0}$ to be the natural inclusion.
Moreover this map is an isomorphism if the Strong Hypothesis holds
since then $R/I$ is Cohen-Macaulay. Therefore we assume from now on that $u>g$.

The map $\pi_{1}$ induces an isomorphism $(\pi_{1}^{\vee})^{-1}:\o_{R_{u-1}} \rTo^{\sim} \o_{R'_{u-1}}$.
Since $a_{u}$ is a nonzerodivisor on $R'_{u-1}$, the connecting homomorphism of $\Ext_{R}(-, \omega_{R})$ applied to the
exact sequence
$$
0\rTo R'_{u-1} \rTo^{a_{u}} R'_{u-1} \rTo^{\pi_{2}} R'_{u-1}/(a_{u}) \rTo 0
$$
yields an embedding $\sigma_{2}: \o_{R'_{u-1}}/a_{u}\o_{R'_{u-1}} \hookrightarrow \o_{R'_{u-1}/(a_{u})}.$
The map $\pi_{3}$ induces an embedding $\pi_{3}^{\vee}: \o_{R'_{u-1}/(a_{u})}\hookrightarrow \o_{R_{u-1}/(a_{u})}$.

For simplicity of notation we set
$$\o := \o_{R_{u-1}/(a_{u})}\quad \hbox{and}\quad
H:=(a_{u}):IR_{u-1}\subset R_{u-1}.
$$

Multiplying by $I$, we see that the maps $(\pi_{1}^{\vee})^{-1},\ \sigma_{2}$ and $\pi_{3}^{\vee}$ together induce an embedding
$$
 I\o_{R_{u-1}}/a_{u}\o_{R_{u-1}} \hookrightarrow I\o.
$$

On the other hand,
$$
I\o \subset 0:_{\o}H = \o_{R_{u-1}/H},
$$
and combining these two embeddings we obtain
\begin{equation}\label{eqn 2}
 I\o_{R_{u-1}}/a_{u}\o_{R_{u-1}} \hookrightarrow \o_{R_{u-1}/H}.
\end{equation}

Finally,
the map $\pi_{5}$ induces an embedding $\pi_{5}^{\vee}: \o_{R_{u-1}/H} \hookrightarrow \o_{R_{u}}$,
which together with the map in (1) gives an embedding
\begin{equation}
I\o_{R_{u-1}}/a_{u}\o_{R_{u-1}} \hookrightarrow \o_{R_{u}}.
\end{equation}

By induction, we may assume that the process just explained induces a
map $I^{u-g}/J_{u-1}I^{u-g-1} \to \o_{R_{u-1}}$, and thus we obtain a map
$$
\frac{I^{u-g+1}+J_{u-1}I^{u-g-1}}{a_{u}I^{u-g}+J_{u-1}I^{u-g-1}} \rTo \o_{R_{u}}.
$$
The left hand side is obviously a homomorphic image of $I^{u-g+1}/J_{u}I^{u-g}$, and this gives the desired homomorphism
$$
\mu_{u-g}: I^{u-g+1}/J_{u}I^{u-g} \rTo \o_{R_{u}}.
$$
\bigbreak

We now show by induction on $u>g$ that if $I$ satisfies the Standard Hypothesis or the Strong
Hypothesis, then $\mu_{u-g}$ is an injection or an isomorphism, respectively.
In either case $I$ satisfies the Strong Hypothesis
with respect to $u-1$, so in particular $\mu_{u-g-1}: I^{u-g}/J_{u-1}I^{u-g-1}\to \omega_{R_{u-1}}$ is an isomorphism
by the induction hypothesis. Multiplying
by $I$ and factoring out $a_{u}I^{u-g}$ we get an induced isomorphism
$$
\frac{I^{u-g+1}}{a_{u}I^{u-g}+ (J_{u-1}I^{u-g-1}\cap I^{u-g+1})} \rTo^{\sim} \frac{I\o_{R_{u-1}}}{a_{u}\o_{R_{u-1}}} \, .
$$

By (2), the right hand side embeds in $\omega_{R_{u}}$. So to prove the injectivity of $\mu_{u-g}$ it suffices to show that
$$
a_{u}I^{u-g}+ (J_{u-1}I^{u-g-1}\cap I^{u-g+1}) = J_{u}I^{u-g}.
$$
The right hand side is obviously contained in the left hand side, so it remains to prove the opposite inclusion.
We trivially have
$$
J_{u-1}I^{u-g-1}\cap I^{u-g+1} \subset I^{u-g+1} \cap K_{u-1},
$$
and Proposition \ref{new basic}(5) gives $I^{u-g+1} \cap K_{u-1}= J_{u-1}I^{u-g}$. 
This concludes the proof that $\mu_{u-g}$ is an injection.

We now show that if $I$ satisfies the Strong Hypothesis, then $\mu_{u-g}$ is a
surjection. To this end it suffices to prove that the map in (2) is a surjection. 
Since $\pi_{5}$ is an isomorphism according to 
Proposition \ref{new basic}(1),
it remains to show that the map in (1) is surjective.
We summarize the argument in the proof of \cite[2.9(b)]{U}.
Recall that $R_{u-1}$ is Cohen-Macaulay by Theorem~\ref{basic 1}.

We first prove that $I\omega_{R_{u-1}}$ is $\omega_{R_{u-1}}$-reflexive.
By induction, $\o_{R_{u-1}} \cong I^{u-g}/J_{u-1}I^{u-g-1}$. Proposition \ref{new basic}(5)
shows that $J_{u-1}I^{u-g-1}=I^{u-g}\cap K_{u-1}$ and therefore
$I^{u-g}/J_{u-1}I^{u-g-1} \cong I^{u-g}R_{u-1}$. It follows that $I\o_{R_{u-1}} \cong I^{u-g+1}R_{u-1}$.
But again by Proposition \ref{new basic}(5),  $I^{u-g+1}R_{u-1} \cong I^{u-g+1}/J_{u-1}I^{u-g} $. Putting this together, we obtain
$$
I\o_{R_{u-1}}  \cong I^{u-g+1}/J_{u-1}I^{u-g}.
$$
By Proposition \ref{new basic}(3), $I^{u-g+1}/J_{u-1}I^{u-g}$ is a
maximal Cohen-Macaulay $R_{u-1}$-module and thus $I\omega_{R_{u-1}}$ is $\omega_{R_{u-1}}$-reflexive, which we write as
$I\omega_{R_{u-1}} = (I\omega_{R_{u-1}})^{\vee \vee}$.

We deduce that
$$I\omega_{R_{u-1}}/a_u\o_{R_{u-1}} = (I\omega_{R_{u-1}})^{\vee \vee}/a_u\o_{R_{u-1}}=\o_{R_{u-1}/H}\, ,$$
where the last identification holds according to \cite[2.1(a)]{U}. Therefore 
the map in (1) is surjective.
This concludes the proof.
\end{proof}

 \smallskip

 \begin{lemma}\label{funny hypothesis} Let $R$ be a Noetherian ring, let
$J\subset I$ be ideals, and let $a\in R$ be an element. If
$$
\codim\ (J:I,a):I\geq u  \hbox{ \quad and \quad} \codim  \big( I +((J,a):I)\big) \geq u \, ,
$$
then
$$
\codim\ (J,a):I \geq u.
$$
\end{lemma}
\begin{proof} One sees that
$$
\big ((J:I,a):I \big) \big(    I +((J,a):I) \big) \subset (J:I,a) + ((J,a):I)  \subset (J,a):I  .
$$
\end{proof}
\smallskip

For future use we record the following statements, proved in the course of the proof of Theorem~\ref{mu definition}.

\begin{corollary}\label{reduction lemma} With the notation and assumptions of Theorem~$\ref{mu definition}$, assume that
$I$ satisfies the Strong Hypothesis.
For $0<g\leq u\leq s$ the rings $R_{u-1}/(a_u)$ and $ R_u$ are Cohen-Macaulay of  dimension $d-u$, and
the surjection $R_{u-1}/(a_u) \twoheadrightarrow R_u$ induces an inclusion of
canonical modules
$$
I^{u-g+1}/J_uI^{u-g}\cong  \omega_{R_u} \hookrightarrow \omega_{R_{u-1}/(a_u)}\cong
I^{u-g}R_{u-1}/(a_uI^{u-g}R_{u-1})
$$
that is compatible with the natural inclusion $I^{u-g+1}\subset I^{u-g}.$
\end{corollary}

\smallskip

\begin{remark}[The Graded Case]\label{graded case} Suppose that $R$ is a standard graded polynomial ring
$k[\seq x d]$. Suppose further that the ideal $I$ is homogeneous
and that the generators $\seq a s$ of $J$ are homogeneous of degrees $\seq \delta s$.
In this setting the construction of Theorem~\ref{mu definition} yields a homogeneous map
$$
I^{u-g+1}/J_{u}I^{u-g}\rTo^{\mu_{u-g}} \omega_{R_{u}}(d-\sum_{j=1}^{u}\delta_{j}).
$$
\end{remark}

\medskip

\section{Proofs of the duality theorems from Section ~\ref{Sduality}}\label{Sproofs}
We follow a suggestion of the referee, and include the statement of each theorem from Section 2 before its proof. Theorems from Section 2 retain the numbering that they had there.
Unless specified otherwise, $I$ will again denote an ideal of codimension $g$ in a local Gorenstein ring $R$, the ideal $K=J:I$ is assumed to be an $s$-residual intersection, and we set $t=s-g$.

\medbreak\noindent {\bf Theorem~\ref{Huneke}.}{\it\ \
Suppose that $R/I$ is Cohen-Macaulay of codimension $g$ and $J = (a_1,\dots, a_{g+1})\subset I$ is such that
$K = J:I$ has codimension $g+1$, then
the $R/K$-module $I/J$ is self-dual; that is,
$$
I/J\cong \Hom_{R}(I/J, \omega_{R/K}).
$$}
\begin{proof}(Huneke)
We may suppose that $J = (a_1,\dots, a_g, b)$
where $a_1,\dots, a_g$ form a regular sequence.
Factoring out $a_1,\dots, a_g$ we may assume $g=0$.

Let $L = 0:b$, and consider the short exact sequence
$$
0\rTo R/L \rTo^{b} R \rTo R/(b) \rTo 0 \, .
$$
Dualizing into $R$ we obtain an exact sequence
$$
R \rTo^\beta 0:L \rTo \Ext^1_R(R/(b), R)\rTo 0 \, .
$$
The image of $\beta$ is the ideal generated by $b$.
Also, we claim that $0:L = I$. Because $R$ is Gorenstein
the ideal $0:L = 0:(0:b)$
is the unmixed part of $(b)$, which is equal to $I$ because
$I$ is unmixed of codimension zero and $(b):I$ has positive codimension in $R$. Putting these two
observations together, we get
$$
I/(b) \cong \Ext^1_R(R/(b), R) \, .
 $$

On the other hand, because $R/I$ is a maximal Cohen-Macaulay $R$-module, and $R$ is Gorenstein,
we have
$$
\Ext^1_R(R/I, R) = \Ext^2_R(R/I, R) = 0 \, ,
$$
 so from the short exact sequence
$$
0\rTo I/(b) \rTo R/(b) \rTo R/I \rTo 0
$$
we get
$$
\Ext^1_R(R/(b), R) \cong \Ext^1_R(I/(b), R) \, .
$$

Since $K$ is an ideal of codimension 1 in the Gorenstein ring $R$ and $K$ annihilates $I/(b)$, 
we have $\Ext^1_R(I/(b), R) \cong \Hom_R(I/(b), \omega_{R/K})$,
and since we already showed that $I/(b) \cong \Ext^1_R(R/(b), R)$, we conclude that
$I/(b) \cong \Hom_R(I/(b), \omega_{R/K})$ as required.
\end{proof}

\smallskip

For the proof of Theorem~\ref{duality} we will need:

\begin{lemma}\label{photo}
In addition to the Standard Hypothesis assume that the residue field $k$ is infinite. Write
$d$ for the dimension of $R$ and let
$x_{1},\dots, x_{d-s}$ be general elements in the maximal ideal. For any $1\leq u\leq t$ one has$:$
\begin{enumerate}
\item The elements $x_{1},\dots,x_{d-s}$ form a regular sequence on $R$ and on $R/I^{u}$.
\item The image $\Ibar$ of $I$ in $R/(x_{1},\dots,x_{d-s})$ satisfies the condition $G_{s}$.
\item We adopt the notation of Theorem~$\ref{duality}$. 
The image $\overline J$  defines an $s$-residual intersection
$\overline J: \Ibar$ in $R/(x_1,\dots, x_{d-s})$.  
Moreover, if $m(\Ibar, u,t)$ is a perfect pairing, then so
are $m(I,u,t)$ and $\mu_{t}\circ m(I,u,t)$.
\end{enumerate}
\end{lemma}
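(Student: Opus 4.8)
The plan is to use $\underline{x}:=x_1,\dots,x_{d-s}$ to cut everything down to the Artinian situation: by Lemma~\ref{basic}(1) the ring $R/K$ has dimension exactly $d-s$, so after reduction modulo $\underline{x}$ the residual intersection becomes zero-dimensional, and that is where perfectness of the pairings is ultimately tested. Since $k$ is infinite I would choose $\underline{x}$ general enough to satisfy the finitely many open conditions that arise below, and I may assume $t\ge 1$ since the lemma is vacuous for $t=0$. Part~(1) is then immediate: $R$ is Cohen--Macaulay of dimension $d\ge d-s$, so a general sequence of length $d-s$ in $\gm$ is $R$-regular; and the Depth Condition $(*)$ gives $\depth(R/I^u)\ge\dim(R/I)-u+1=(d-g)-u+1\ge(d-g)-t+1=d-s+1$ for $1\le u\le t$, so one sufficiently general sequence of length $d-s$ is simultaneously regular on $R,R/I,\dots,R/I^t$.

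For Part~(2) I would show that $G_s$ descends along a single general hyperplane section and iterate $d-s$ times. Let $x\in\gm$ be general. Since $x$ is a nonzerodivisor on $R/I$ (Part~(1)) one gets $\mu(\bar I_{\bar P})=\mu(I_P)$ for every prime $P\supseteq I+(x)$, and since $x$ belongs to a system of parameters of the Cohen--Macaulay ring $R$ one has $\codim_{\bar R}\bar P=\codim_R P-1$ whenever $P\supseteq(x)$. If $\bar I$ failed $G_s$ at some $\bar P$ with $c:=\codim_{\bar R}\bar P\le s-1$, then $\mu(I_P)>c$ while $\codim_R P=c+1\le s$, so $P$ would be a minimal prime of the locus $\{Q\supseteq I:\mu(I_Q)>c\}$, which by $G_s$ for $I$ has codimension $\ge c+1$; hence $P$ has codimension exactly $c+1<d$ and is one of the finitely many (proper) minimal primes of these loci for $c\le s-1$, which a general $x$ avoids --- a contradiction. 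Iterating gives~(2); along the way one checks that $\bar I\subset\bar R$ again satisfies the ``Standard Hypothesis'' (the Depth Conditions transfer because $\underline{x}$ is $R/I^j$-regular), so the maps $m(\bar I,u,t)$ and $n(\bar I,t)$ of Theorem~\ref{duality} are defined.

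For Part~(3), write $\overline{(-)}=(-)\otimes_R R/(\underline{x})$ and choose $\underline{x}$ general enough that, beyond (1) and (2), it is a regular sequence on $R/K$, on each $M_u:=I^u/JI^{u-1}$ with $1\le u\le t$, on $\omega_{R/K}$, and on the $\Hom$-modules occurring below. I would argue in three steps. $(\mathrm{A})$ By Part~(1), $\mathrm{Tor}^R_i(R/(\underline{x}),R/I^u)=0$ for $i>0$ and $0\le u\le t$; in particular $I^u\cap(\underline{x})=(\underline{x})I^u$, which yields canonical identifications $\overline{I^u/JI^{u-1}}\cong\bar I^u/\bar J\bar I^{u-1}$ compatible with multiplication, and, combined with $\overline{\omega_{R/K}}\cong\omega_{\overline{R/K}}$ --- obtained from \cite[2.1]{U} exactly as in the proof of Lemma~\ref{canonical comparison} --- identifies the reductions of $m(I,u,t)$ and $n(I,t)$ with $m(\bar I,u,t)$ and $n(\bar I,t)$. $(\mathrm{B})$ By Lemma~\ref{basic}(2) each $M_u$ with $1\le u\le t$ is a maximal Cohen--Macaulay $R/K$-module, so reduction modulo $\underline{x}$ commutes with $\Hom_{R/K}(M_u,-)$ into $\omega_{R/K}$ and, through $n(I,t)$, into $I^{t+1}/JI^t$, the base-change map being an isomorphism because the relevant $\mathrm{Ext}^{>0}$ and $\mathrm{Tor}^{>0}$ vanish. $(\mathrm{C})$ A Nakayama argument then upgrades perfectness: the map $\psi_u\colon M_u\to\Hom_{R/K}(M_{t+1-u},\,I^{t+1}/JI^t)$ induced by $m(I,u,t)$ is a map of maximal Cohen--Macaulay $R/K$-modules whose reduction is, by $(\mathrm{A})$ and $(\mathrm{B})$, the map induced by $m(\bar I,u,t)$, an isomorphism by hypothesis; hence $\coker\psi_u\otimes R/(\underline{x})=0$ gives $\coker\psi_u=0$, and then $\mathrm{Tor}^R_1(R/(\underline{x}),\cdot)$ into the maximal Cohen--Macaulay target vanishes, so $\ker\psi_u\otimes R/(\underline{x})$ injects into $\bar M_u$ and maps to $0$, whence $\ker\psi_u=0$. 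Thus $\psi_u$ is an isomorphism, i.e. $m(I,u,t)$ is a perfect pairing, and the identical argument with target $\omega_{R/K}$ handles $n(I,t)\circ m(I,u,t)$.

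The main obstacle lies in steps $(\mathrm{B})$ and $(\mathrm{C})$: they need the target modules $I^{t+1}/JI^t$ and $\omega_{R/K}$, and the $\Hom$'s built from them, to be maximal Cohen--Macaulay over $R/K$, so that $\underline{x}$ remains a regular sequence and the $\Hom$ base-change is an isomorphism. Under only the ``Standard Hypothesis'', $R/K$ need not be Cohen--Macaulay and $I^{t+1}/JI^t$ is a priori merely a submodule of $\omega_{R/K}$ (Lemma~\ref{canonical comparison}), so this homological book-keeping is the delicate point. The route I would take is to first invoke Lemma~\ref{principal ideal}, which presents $K$ over $\Rbar:=R/(J_{s-1}:I)$ as $a_s\Ibar:\Rbar$ with $J_{s-1}:I$ a \emph{geometric} $(s-1)$-residual intersection, $\Rbar$ Cohen--Macaulay, and $a_s$ an $\Rbar$-nonzerodivisor, and then track how reduction modulo $\underline{x}$ interacts with that presentation and with \cite[2.1]{U} --- precisely the mechanism already used to prove Lemma~\ref{canonical comparison}. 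Reusing it is what lets the transfer of perfectness go through with no hypotheses beyond the ``Standard'' one.
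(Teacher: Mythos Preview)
Your arguments for (1) and (2) are correct; for (2) you use a prime-avoidance argument while the paper phrases the same content through Fitting ideals (the condition $G_s$ is equivalent to $\codim\operatorname{Fitt}_{i-1}(I)\ge i$ for $i\le s$), but the two are equivalent.

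The gap is in (3), and you have located it yourself. Your Nakayama reduction in steps (B)--(C) requires $\underline x$ to be regular on $\omega_{R/K}$, on $I^{t+1}/JI^t$, and on the modules $\Hom_{R/K}(M_{t+1-u},I^{t+1}/JI^t)$. Under only the Standard Hypothesis $R/K$ need not be Cohen--Macaulay, so $\omega_{R/K}$ is merely $S_2$, not maximal Cohen--Macaulay, and $I^{t+1}/JI^t$ is only known to embed in $\omega_{R/K}$ (Lemma~\ref{canonical comparison}); there is no reason a general sequence of length $d-s$ should be regular on any of these, and the clause ``choose $\underline x$ general enough that \dots\ it is a regular sequence on $\omega_{R/K}$ and on the $\Hom$-modules'' is circular---it presupposes exactly the depth estimate you need. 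Your proposed detour through Lemma~\ref{principal ideal} might be made to work, but you do not carry it out, and it is not what the paper does.

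The paper's device is different and cleaner: choose a regular sequence $y_1,\dots,y_s\in K$ and set $A=R/(y_1,\dots,y_s)$. Then $A$ is \emph{Gorenstein} of dimension $d-s$, each $M_u$ with $1\le u\le t$ is a maximal Cohen--Macaulay $A$-module, and local duality gives $\Hom_R(M_u,\omega_{R/K})\cong\Hom_R(M_u,\omega_A)$, the latter automatically maximal Cohen--Macaulay over $A$. All the base-change isomorphisms now take place over the Cohen--Macaulay ring $A$ rather than over $R/K$. Writing $\alpha$ for the map into $\Hom(M_u,I^{t+1}/JI^t)$ and $\beta$ for the injection into $\Hom(M_u,\omega_{R/K})\cong\Hom(M_u,\omega_A)$, one first shows $(\beta\alpha)\otimes\bar R$ is an isomorphism (this is the commutative square at the end of the paper's proof); Nakayama makes $\beta\alpha$ surjective, so the already-injective $\beta$ is an isomorphism, and \emph{only then} is the target of $\alpha$ known to be maximal Cohen--Macaulay, after which injectivity of $\alpha$ follows from regularity of $\underline x$ on that target. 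This two-step bootstrap through the auxiliary Gorenstein ring $A$ is the missing ingredient in your argument.
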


\begin{proof}
\noindent (1): By the Standard Hypothesis
$$\depth(R/I^{u})\geq \dim(R/I)-u+1\geq \dim(R/I)-t+1 = d-s+1.$$
In particular,
the elements $x_{1},\dots,x_{d-s}$ form a regular sequence on $R$ and on $R/I^{u}$.

\smallbreak\noindent (2): The condition $G_{s}$ is equivalent to the condition that the codimension of 
$I + \,$Fitt$_{i-1}(I)$ is at least $i$ for $1 \leq i \leq s$. The Fitting ideals of the image $\Ibar$ of $I$ in $R/(x_{1},\dots,x_{d-s})$
contain the image of the Fitting ideals, and so the codimensions of  $\Ibar + \,$Fitt$_{i-1}(\Ibar)$ satisfy  the same inequalities because
the elements $\seq x {d-s}$ are general and $\dim R/(\seq x {d-s}) = s$.

\smallbreak\noindent (3): By Proposition~\ref{codimension}, the codimension of $K$ is exactly $s$. Let $y_{1},\dots, y_{s}$ be a
regular sequence inside $K$, and set $A = R/(y_{1},\dots, y_{s})$. Note that $x_{1}, \dots, x_{d-s}$ is a regular sequence on $A$.

We recall the map $\mu_t$ of Theorem \ref{mu definition}, which in an embedding under the present assumptions. The maps
\begin{align*}
I^{u}/JI^{u-1} \otimes_{R}I^{t+1-u}/JI^{t-u} \rTo^{m(I,u,t)} I^{t+1} /JI^{t} \rInto^{\mu_{t}} \omega_{R/K}\rInto \omega_{A}
\end{align*}
induce maps
\begin{align*}
I^{t+1-u}/JI^{t-u}&\rTo^{\alpha} \Hom_{R}(I^{u}/JI^{u-1}, I^{t+1}/JI^{t}) \\
&\rInto^{\beta}
\Hom_{R}(I^{u}/JI^{u-1},\omega_{R/K}) \\
&\rTo^{\cong}
\Hom_{R}(I^{u}/JI^{u-1},\omega_{A}),
\end{align*}
where the last map is an isomorphism by Hom-tensor adjointness. We must show that under our hypothesis $\alpha$ and $\beta$ are
both isomorphisms.

By Proposition~\ref{new basic} (3) the module $I^{u}/JI^{u-1}$ is a maximal Cohen-Macaulay $A$-module. Since $A$
is Cohen-Macaulay,
$\Hom_{R}(I^{u}/JI^{u-1},\omega_{A})$ is a maximal Cohen-Macaulay $A$-module too. Thus $\seq x {d-s}$
form a regular sequence on this module.

Let $\Abar = A/(x_{1}, \dots, x_{d-s})$ and $\Rbar = R/(x_{1}, \dots, x_{d-s})$; we write $\Ibar, \Jbar, \Kbar$ for the images
of $I,J,K$ in $\Rbar$, respectively. We will show that
$(\beta\alpha)\otimes_{R}\Rbar$ is an isomorphism. This implies that $\beta\alpha$ is surjective, and thus $\beta$ is surjective and
consequently $\beta$ is an isomorphism. It follows that $\alpha$ is also surjective, and
$\seq x {d-s}$ is a regular sequence on the image of $\alpha$. Because $\alpha\otimes_{R}\Rbar$ is
an injection and $x_{1},\dots,x_{d-s}$ is a regular sequence on the image of $\alpha$, it follows that $\alpha$ is a monomorphism.

It remains to show that $(\beta\alpha)\otimes_{R}\Rbar$ is an isomorphism. The ideal $\Ibar$ satisfies the Standard Hypothesis
by items (1) and (2).  Since $s =\dim \Rbar$, the Standard Hypothesis is the same as the Strong Hypothesis in this case.
The ideal $\Kbar$ has codimension $s$ and is contained in $\Jbar:\Ibar$, hence
$\Jbar:\Ibar$ is an $s$-residual intersection.
 Arguing as above, there are maps
\begin{align*}
\Ibar^{t+1-u}/\Jbar\Ibar^{t-u}&\rTo^{\abar} \Hom_{\Rbar}(\Ibar^{u}/\Jbar\Ibar^{u-1}, \Ibar^{t+1}/\Jbar\Ibar^{t}) \\
&\rInto^{\bbar}
\Hom_{\Rbar}(\Ibar^{u}/\Jbar\Ibar^{u-1},\omega_{\Rbar/(\Jbar \, : \, \Ibar)}) \\
&\rTo^{\cong}
\Hom_{\Rbar}(\Ibar^{u}/\Jbar\Ibar^{u-1},\omega_{\Abar}),
\end{align*}
induced by $m(\Ibar,u,t)$ and $\mu_{t}$. By assumption, $\abar$ is an isomorphism. Moreover, since
$\Ibar$ satisfies the Strong Hypothesis, $\bbar$ is an
isomorphism by Theorem~\ref{mu definition}.

Because $x_{1},\dots, x_{d-s}$ form a regular sequence on $R/I^{u}$ by item (1), it follows that
$\Ibar^{u}/\Jbar\Ibar^{u-1}\cong I^{u}/JI^{u-1}\otimes_{R}\Rbar$. Further

\begin{align*}
 \Hom_{\Rbar}(\Ibar^{u}/\Jbar\Ibar^{u-1},\omega_{\Abar}) &\cong
\Hom_{\Rbar}(I^{u}/JI^{u-1}\otimes_{R}\Rbar,\omega_{A}\otimes_{R}\Rbar)\\ &\cong
\Hom_{R}(I^{u}/JI^{u-1},\omega_{A})\otimes_{R}\Rbar,
\end{align*}
where the second isomorphism holds because $I^{u}/JI^{u-1}$ is a maximal Cohen-Macaulay $A$-module.

In the following commutative diagram
$$
\begin{diagram}[small]
I^{t+1-u}/JI^{t-u}\otimes_{R}\Rbar & \rTo^{(\beta\alpha)\otimes_{R}\Rbar} & \Hom_{R}(I^{u}/JI^{u-1},\omega_{A})\otimes_{R}\Rbar\\
\dIso^{\cong}&&\dIso_{\cong}\\
\Ibar^{t+1-u}/\Jbar\Ibar^{t-u}& \rTo^{\cong} &\Hom_{\Rbar}(\Ibar^{u}/\Jbar\Ibar^{u-1},\omega_{\Abar})
\end{diagram}
$$
we can take the vertical maps and the bottom horizontal map to be the isomorphisms established above. Thus
$(\beta\alpha)\otimes_{R}\Rbar$ is an isomorphism as required.
\end{proof}

\medbreak\noindent 
{\bf Theorem~\ref{duality}.}{\it\ \
Under the Standard Hypothesis, Theorem~\ref{mu definition} applies to give an injective  map
$\mu_{t}: I^{t+1}/JI^{t}\to \omega_{R/K}$. For $1\leq u\leq t$, both the multiplication map
$$
m(I,u,t): I^{u}/JI^{u-1}\otimes I^{t+1-u}/JI^{t-u} \rTo^{\rm mult}  I^{t+1}/JI^{t}
$$
and the composition $\mu_{t}\circ m(I,u,t)$
are perfect pairings.
}
\begin{proof}
The injectivity of $\mu_{t}$ was proven in Theorem~\ref{mu definition}, so it suffices
to prove the duality statements. We proceed by induction on $t \geq 0$, the case $t=0$ being vacuous.

We may assume that the residue field $k$ is infinite. We may harmlessly replace $R$ by $R[[x]]$ and replace $I,J$ by $(I,x), (J,x)$. In this new setting we have $g>0$. After proving the result in this new setting, the original result is recovered by taking the degree 0 part with respect to $x$. By Lemma~\ref{photo} we may further assume that $d=s$.

In this case the extra strength of the Strong Hypothesis is vacuous. Thus we may apply Theorem~\ref{basic 1} to deduce
that $I^{t+1}/JI^{t}\cong \omega_{R/K}$. Further, $I/J$ has finite length, and it follows that the lengths of the modules
$I^{u}/JI^{u-1}$ and $\Hom(I^{u}/JI^{u-1}, I^{t+1}/JI^{t})$ are equal. We will prove that 
\begin{equation}
JI^{t}:I^{u} \subset JI^{t-u}
\end{equation}
for all $1\leq u\leq t$. It will follow that the map
$$
I^{t+1-u}/JI^{t-u} \rTo \Hom(I^{u}/JI^{u-1}, I^{t+1}/JI^{t})
$$
induced by multiplication is injective, and thus
\begin{align*}
\length(I^{t+1-u}/JI^{t-u}) &\leq \length \Hom(I^{u}/JI^{u-1}, I^{t+1}/JI^{t})\\
 &= \length (I^{u}/JI^{u-1}).
\end{align*}
Since this set of  inequalities is symmetric under interchanging $u$ and $t+1-u$, it follows that
$\length(I^{u}/JI^{u-1}) = \length (I^{t+1-u}/JI^{t-u})$, and thus the injective map above is an isomorphism.

It remains to prove equation (3).
We use Lemma~\ref{principal ideal} with ${\frak a}=J$, and we adopt the notations $J_{s-1}$, $a_{s}$ and $K_{s-1}$
from that Lemma. We write $\Rbar=R/K_{s-1}$ and $\Ibar = I\Rbar$. By  Proposition~\ref{new basic}(5),
$\Ibar^{u}\cong I^{u}/J_{s-1}I^{u-1}$. By the induction hypothesis $m(I, u,t-1)$ is a perfect pairing.
That is, for $1\leq u\leq t-1$ the natural maps
$$
\Ibar^{t-u} \rTo \Hom_{\Rbar}(\Ibar^{u}, \Ibar^{t})
$$
are isomorphisms, and this condition  holds also for $u =t$ because $\Rbar$ is
Cohen-Macaulay with canonical module $\Ibar^{t}$, by Theorem~\ref{basic 1}
and Proposition~\ref{new basic}(5). 
 Recall that $J\Rbar = a_{s}\Rbar \subset \Ibar$. By Proposition~\ref{new basic}(1), $a_{s}$ is regular on $\Rbar$.
Since the ideal $\Ibar$ contains a nonzerodivisor, 
there is a natural isomorphism $\Hom_{\Rbar}(\Ibar^{u}, \Ibar^{t}) \cong \Ibar^{t}:_{Q(\Rbar)}\Ibar^{u}$, where $Q(\Rbar)$ denotes
the total ring of quotients of $\Rbar$. Therefore
\begin{equation*}\label{Gorenstein}
\Ibar^{t}:_{Q(\Rbar)}\Ibar^{u} = \Ibar^{t-u}.
\end{equation*}
Since $J\Rbar$ is generated by the nonzerdivisor $a_{s}$,
$$
(J\Ibar^{t}):_{Q(\Rbar)}\Ibar^{u} = (a_{s}\Ibar^{t}):_{Q(\Rbar)}\Ibar^{u} = a_{s}(\Ibar^{t}:_{Q(\Rbar)}\Ibar^{u})= a_{s}\Ibar^{t-u}.
$$
In particular,
$$
(J\Ibar^{t}):_{\Rbar}\Ibar^{u} \subset J\Ibar^{t-u},
$$
and hence
\begin{equation}\label{over R}
(JI^{t}):_{R}I^{u} \subset JI^{t-u}+K_{s-1}.
\end{equation}

On the other hand, our assumptions on $I$ imply that locally on the punctured spectrum of $R$, the associated graded ring $\gr_{I}(R)$ is Cohen-Macaulay
(Theorem \ref{basic 1}, \cite[3.4]{HVV}, and \cite[6.1]{HSV2}).
Since $g>0$ it follows that locally on the punctured spectrum of $R$, the irrelevant ideal of $\gr_{I}(R)$ has positive grade and therefore
$I^{t+1}:I^{u} = I^{t+1-u}$. Since by the Standard Hypothesis the maximal ideal is not an associated prime of $R/I^{t+1-u}$,
we conclude that $I^{t+1}:I^{u} = I^{t+1-u}$ globally in $R$.
In particular, $JI^{t}:I^{u}\subset I^{t+1-u}$, so equation (\ref{over R}) gives
$$
JI^{t}:I^{u}\subset JI^{t-u}+ K_{s-1}\cap I^{t+1-u}.
$$
Finally, by Proposition~\ref{new basic}(5),
$$
K_{s-1}\cap I^{t+1-u} = J_{s-1}I^{t-u},
$$
which completes the proof of (3).
\end{proof}

\vspace{0.0cm}

\medbreak\noindent {\bf Corollary~\ref{Rees is Gor}.}{\it\ \
Under the Strong Hypothesis, the ring
\begin{align*}
\RR &:= R/K \oplus I/J \oplus I^{2}/JI \oplus \ \ldots \ \oplus I^{t+1}/JI^{t} \\
&= R[Iz]/(K, Jz, (Iz)^{t+2})
\end{align*}
is Gorenstein.
}

\begin{proof}
As a graded $R$-algebra $\RR$ is generated in degree 1 and concentrated in degrees $0,\dots, t+1$,
so  the Gorenstein property is equivalent to the statements:
\begin{enumerate}
\item $\RR$ is a maximal Cohen-Macaulay module over $\RR_{0}$.
\item $\RR_{t+1} = \omega_{\RR_{0}}.$
\item The pairings $\RR_{u}\otimes \RR_{t+1-u}\to \RR_{t+1}$ induced by multiplication are perfect for $u = 1,\dots, t$.
\end{enumerate}
Here items (2) and (3) are equivalent to the existence of an isomorphism of graded $\RR$-modules
$$
\Hom_{\RR_{0}}(\RR, \omega_{\RR_{0}})(-t-1)\cong \RR.
$$
Item (1)  follows from Theorem~\ref{basic 1} and  Proposition~\ref{new basic}(3). Item (2) follows
from Theorem~\ref{basic 1}, while item (3) is the conclusion of Theorem~\ref{duality}.
\end{proof}

For the proof of Theorem~\ref{associated graded} we will use the following general result:

\begin{proposition}\label{gor from gor}
Let $R$ be a local Cohen-Macaulay ring, let $I\subset R$ be an ideal of positive codimension, 
and let $t \geq 0$ be an integer. If the truncated
Rees ring $\RR(I)/\RR(I)_{\geq t+2}$ is Gorenstein, then so is the truncated associated graded
ring $\gr_{I}(R)/\gr_{I}(R)_{\geq t+1}$ and the ring $R/I^{t+1}$.
\end{proposition}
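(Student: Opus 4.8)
The plan is to work at the level of canonical modules. Write $A:=\RR(I)/\RR(I)_{\geq t+2}=\bigoplus_{u=0}^{t+1}I^{u}$ and $B:=\gr_{I}(R)/\gr_{I}(R)_{\geq t+1}=\bigoplus_{u=0}^{t}I^{u}/I^{u+1}$. The point is that $A$ is a finitely generated $R$-module (a finite direct sum of the $I^{u}$) and $B$ is a finitely generated $R/I$-module, and both are standard graded over their degree-zero pieces; so, after replacing $R$ by its completion (which changes neither hypothesis nor conclusion) so that a canonical module $\omega_{R}$ is available, proving $B$ Gorenstein amounts to proving that $B$ is Cohen-Macaulay and that its graded canonical module $\omega_{B}$ is cyclic, i.e.\ isomorphic to a twist of $B$.

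First I would extract from the hypothesis the structural facts I need about $A$. Since $A$ is module-finite over $R$ we have $\dim A=\dim R$ and $\depth_{R}A=\depth A$, so $A$ Cohen-Macaulay forces every summand $I^{u}$ ($0\leq u\leq t+1$) to be a maximal Cohen-Macaulay $R$-module; in particular $I$ has grade $1$ and $R/I$ is Cohen-Macaulay of dimension $\dim R-1$. Writing $\omega_{A}\cong\Hom_{R}(A,\omega_{R})$ (as $\codim_{R}A=0$), the graded piece in degree $-u$ is $\Hom_{R}(I^{u},\omega_{R})$, so $\omega_{A}$ lives in degrees $-t-1,\dots,0$; since $A$ is Gorenstein, $\omega_{A}\cong A(a)$ for some $a$, and comparing the ranges of degrees forces $a=t+1$. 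This gives isomorphisms $\Hom_{R}(I^{u},\omega_{R})\cong I^{t+1-u}$ for $0\leq u\leq t+1$, and---reading off the $A$-module structure of $\omega_{A}\cong A(t+1)$---these isomorphisms are induced by multiplication and are compatible with the inclusions $I^{u+1}\hookrightarrow I^{u}$ (which correspond to the inclusions $I^{t+1-u}\hookrightarrow I^{t-u}$) and with multiplication by elements of $I$. In particular $\omega_{R}\cong I^{t+1}$, so $\omega_{R}$ is torsion-free, and $\omega_{R/I}\cong I^{t}/I^{t+1}=B_{t}$.

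Next I would handle $B$. Cohen-Macaulayness is easy: the exact sequences $0\to I^{u+1}\to I^{u}\to I^{u}/I^{u+1}\to 0$ and the maximal Cohen-Macaulayness of the $I^{u}$ show that each $I^{u}/I^{u+1}$ ($0\leq u\leq t$) is a maximal Cohen-Macaulay $R/I$-module, hence so is $B$, so $B$ is Cohen-Macaulay. For the canonical module, since $B$ is a maximal Cohen-Macaulay $R/I$-module of codimension $1$ in $R$, standard facts give $\omega_{B}\cong\Ext^{1}_{R}(B,\omega_{R})=\bigoplus_{v}\Ext^{1}_{R}(I^{v}/I^{v+1},\omega_{R})$, with the $v$-th summand in degree $-v$ and the evident $B$-module structure. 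Applying $\Hom_{R}(-,\omega_{R})$ to $0\to I^{v+1}\to I^{v}\to I^{v}/I^{v+1}\to 0$ and using $\Ext^{1}_{R}(I^{v},\omega_{R})=0$ and $\Hom_{R}(I^{v}/I^{v+1},\omega_{R})=0$ (torsion into torsion-free), one gets
$$
\Ext^{1}_{R}(I^{v}/I^{v+1},\omega_{R})\;\cong\;\coker\big(\Hom_{R}(I^{v},\omega_{R})\to\Hom_{R}(I^{v+1},\omega_{R})\big)\;\cong\;\coker\big(I^{t+1-v}\hookrightarrow I^{t-v}\big)\;=\;I^{t-v}/I^{t-v+1}\;=\;B_{t-v},
$$
the middle identification being exactly the compatibility recorded in the previous step. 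Thus $(\omega_{B})_{-v}\cong B_{t-v}=B(t)_{-v}$ for $0\leq v\leq t$, and since all of these identifications come from multiplication, a short diagram chase shows they are compatible with the action of $B_{1}$; as $B$ is standard graded, they assemble into an isomorphism of graded $B$-modules $\omega_{B}\cong B(t)$. Together with Cohen-Macaulayness this gives that $B$ is Gorenstein.

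The main obstacle is the second step: one must squeeze out of ``$A$ is Gorenstein'' not merely the abstract module isomorphisms $\Hom_{R}(I^{u},\omega_{R})\cong I^{t+1-u}$ but the fact that they are given by multiplication and fit together compatibly, because it is precisely this compatibility that makes the cokernels in the $\Ext$ computation collapse onto the graded pieces of $B$ and makes the resulting isomorphism $B$-linear rather than only degreewise. In short, one has to keep the graded and multiplicative structure alive throughout and resist reducing to ungraded statements. A secondary point needing care is the bookkeeping with twists and with the passage between $\Ext^{1}_{R}(-,\omega_{R})$ and $\Hom_{R/I}(-,\omega_{R/I})$ for maximal Cohen-Macaulay $R/I$-modules.
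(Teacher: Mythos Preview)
Your argument is correct, and the route is genuinely different from the paper's. The paper proves that $B$ is Gorenstein by showing only that $\omega_{B}$ is \emph{cyclic}: it uses the split sequence $0\to A_{\geq 1}\to A\to R\to 0$ to get a surjection $\omega_{A}\twoheadrightarrow\Hom_{R}(A_{\geq 1},\omega_{R})$, and then the sequence $0\to A_{\geq 1}(1)\to A/A_{t+1}\to B\to 0$ together with the maximal Cohen--Macaulayness of $A/A_{t+1}$ to get a surjection $\Hom_{R}(A_{\geq 1},\omega_{R})(-1)\twoheadrightarrow\Ext^{1}_{R}(B,\omega_{R})=\omega_{B}$. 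Since $\omega_{A}$ is cyclic, so is $\omega_{B}$, and one is done without ever identifying $\omega_{B}$ explicitly.

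By contrast, you compute $\omega_{B}$ degree by degree and obtain the sharper statement $\omega_{B}\cong B(t)$. The price is exactly the obstacle you flag: you must verify that the isomorphisms $\Hom_{R}(I^{u},\omega_{R})\cong I^{t+1-u}$ coming from $\omega_{A}\cong A(t+1)$ intertwine restriction along $I^{u+1}\hookrightarrow I^{u}$ with the inclusion $I^{t+1-u}\hookrightarrow I^{t-u}$, and then that the resulting degreewise identifications respect the $B_{1}$-action. This does hold (choose a generator $\psi\in\Hom_{R}(I^{t+1},\omega_{R})$ of $\omega_{A}$; then $b\in I^{t+1-u}$ corresponds to $x\mapsto\psi(bx)$, and restriction to $I^{u+1}$ is visibly the inclusion $I^{t+1-u}\subset I^{t-u}$), but it is worth writing out rather than invoking a ``short diagram chase.'' One small wording point: your justification $\Hom_{R}(I^{v}/I^{v+1},\omega_{R})=0$ is better phrased as ``$I^{v}/I^{v+1}$ is $I$-torsion while $\Hom_{R}(R/I,\omega_{R})=0$ because $\codim I>0$,'' since $\omega_{R}$ need not be torsion-free in the usual sense over a non-domain.

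The paper's approach is shorter and sidesteps all compatibility bookkeeping; yours is more explicit and yields the $a$-invariant of $B$ as a bonus.
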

\begin{proof} We may assume that $I \neq R$.
Write $d = \dim R$ and set
\begin{align*}
A &:= \RR(I)/\RR(I)_{\geq t+2} = R\oplus I\oplus \ \ldots \ \oplus I^{t+1}, \\
B &:= \gr_{I}(R)/\gr_{I}(R)_{\geq t+1} = R/I\oplus I/I^{2}\oplus \ \ldots \ \oplus I^{t}/I^{t+1}.
\end{align*}
Since $A$ is a Cohen-Macaulay ring, finite over $R$, the ideal $I^{j}$ is a maximal Cohen-Macaulay module for $j\leq t+1$, and
 it follows that $R/I^{j}$ is a Cohen-Macaulay ring of dimension $d-1$ for
$j\leq t+1$. From this we see that $I^{j}/I^{j+1}$ is a maximal Cohen-Macaulay $R/I$-module for
$j\leq t$. Thus $B$ is a Cohen-Macaulay ring of dimension $d-1$.

To prove that $B$ is Gorenstein we will show that $\omega_{B} = \Ext^{1}_{R}(B, \omega_{R})$ is
cyclic as a $B$-module by showing that there is a surjection of $A$-modules from
the cyclic $A$-module $\omega_{A}(-1)$ to $\omega_{B}$.
The exact sequence of $A$-modules
$$
0\rTo A_{\geq 1}\rTo A\rTo R\rTo 0
$$
is split as a sequence of $R$-modules, so there is a surjection of $A$-modules
$$
\omega_{A} = \Hom_{R}(A,\omega_{R}) \twoheadrightarrow \Hom_{R}(A_{\geq 1}, \omega_{R}).
$$
On the other hand, from the exact sequence of $A$-modules
$$
0\rTo A_{\geq 1}(1)\rTo A/A_{t+1}\rTo B\rTo 0
$$
we get a map
$$
\Hom_{R}(A_{\geq 1}, \omega_{R})(-1)\rTo \Ext^{1}_{R}(B,\omega_{R}) = \omega_{B}
$$
that is surjective because $A/A_{t+1}$ is a maximal Cohen-Macaulay $R$-module.

Finally, since $B$ is Gorenstein and $B$ is the associated graded ring of $R/I^{t+1}$ with respect
to the ideal $I/I^{t+1}$, it follows that $R/I^{t+1}$ is Gorenstein as well.
\end{proof}

\medbreak\noindent {\bf Theorem~\ref{associated graded}.}{\it\ \
In addition to the Strong Hypothesis, suppose that $K=J:I$ is a geometric $s$-residual intersection.
\begin{enumerate}
\item Let $\Ibar\subset \Rbar:=R/K$ be the image of $I$.
The truncated Rees algebra
$$
\Rbar \oplus \Ibar \oplus \Ibar^{2}\oplus \ \ldots \ \oplus \Ibar^{t+1}
$$
is Gorenstein. In particular, $\Ibar^{t+1}\cong \omega_{\Rbar}$ and the multiplication maps
$\Ibar^{u}\otimes \Ibar^{t+1-u}\to \Ibar^{t+1}$ are perfect pairings for $0\leq u\leq t+1$.

\item Let $I'\subset R' := R/(K+I^{t+1})$ be the image of $I$. The associated graded ring
$\gr_{I'}(R')$ is Gorenstein.
\end{enumerate}
}

\begin{proof}
Recall that $\Rbar$ is Cohen-Macaulay according to Theorem~\ref{basic 1}. By assumption the residual intersection is geometric, so
$\Ibar$ has positive codimension in $\Rbar$ by
Proposition~\ref{codimension}. The truncated Rees algebra $\RR(\Ibar)/\RR(\Ibar)_{\geq t+2}$ is equal to $R[Iz]/(K,Jz, (Iz)^{t+2})$ by
Proposition~\ref{new basic}(5). From Corollary~\ref{Rees is Gor} we see that
this ring is Gorenstein. Thus by Proposition~\ref{gor from gor}, the truncated associated graded ring
$
\gr_{\Ibar}(\Rbar)/\gr_{\Ibar}(\Rbar)_{\geq t+1}
$
is Gorenstein. Since $R' = \Rbar/\Ibar^{t+1}$, the associated graded ring
$\gr_{I'}(R')$ is equal to $\gr_{\Ibar}(\Rbar)/\gr_{\Ibar}(\Rbar)_{\geq t+1}$,
completing the argument.
\end{proof}

\smallskip

\medbreak\noindent {\bf Theorem~\ref{general duality}.}{\it\ \
Suppose that $(R,I)$ has a deformation
$(\tR,\tI)$ such that $\tI$ satisfies the condition $G_{s}$ and
the Koszul homology $H_{i}(\tI)$ is Cohen-Macaulay for $0\leq i\leq t = s-g$.
Assume further that  $I$ satisfies the condition $G_{g+v}$ for some $(t-1)/2\leq v\leq t$.

Let $\tJ$ be a lifting of
$J$ to an ideal with $s$ generators contained in $\tI$. 
The ideal
$\tK = \tJ:\tI$ is an $s$-residual intersection of $\tI$. 
Our hypothesis implies that Theorem~$\ref{mu definition}$ holds with $\tK$ in place of $K$
and gives an isomorphism $\mu_{t}$.
The inverse $\phi: \omega_{\tR/\tK}\to \tI^{t+1}/\tJ\tI^{t}$  of  $\mu_{t}$
 induces a map $\phi': \omega_{R/K}\to I^{t+1}/JI^{t}$. We have$:$

\begin{enumerate}
\item $\phi'$ is a surjection, and is an isomorphism if $K$ is a geometric $s$-residual intersection.
\item There are perfect pairings
$$
m: I^{u}/JI^{u-1}\otimes I^{t+1-u}/JI^{t-u} \rTo \omega_{R/K}
$$
for
$$
t-v \leq u \leq v+1
$$
or, equivalently, for
$$
\frac{t+1}{2} -\varepsilon \leq u \leq \frac{t+1}{2} +\varepsilon \, ,
$$
where $\varepsilon = v-(t-1)/2$.

\item If the perfect pairing $m$ is chosen as in Figure $1$ in the proof below, then
$\phi'\circ m$ is the map induced by multiplication $I^{u}\otimes I^{t+1-u}\to I^{t+1}.$
\end{enumerate}
}

\begin{proof}
We first show that
 $\tK = \tJ:\tI$ is an $s$-residual intersection of $\tI$, that is, $\codim \tK\geq s$. To this end, note that
 $\tK R\subset K$ and, by \cite[4.1]{HU}, $K\subset \sqrt{\tK R}$. Thus
 $\codim \tK R = \codim K\geq s$. Since $\codim \tK\geq \codim \tK R$ we see that
 $\codim \tK\geq s$ as required.

The ideal $\tI$ satisfies the Strong Hypothesis, by the discussion in Section 1. If we had assumed that the residue field was
infinite,   Lemma~\ref{general position} would give the appropriate lower bounds for codimensions of ideals in
the assumptions of Theorem~\ref{mu definition}.
The lower bounds follow even without an infinite residue field from the references in the proof of Lemma~\ref{general position}.
On the other hand, the necessary upper bounds follow from 
Proposition~\ref{new basic}(1) and Proposition~\ref{codimension}.
Hence Theorem~\ref{mu definition} gives an isomorphism $\mu_{t}: \tI^{t+1}/\tJ\tI^t \to \omega_{\tR/\tK}$.

 Since $\tI$ satisfies the Strong Hypothesis, we also know from Theorem~\ref{basic 1} and Proposition~\ref{codimension}
 that $\tR/\tK$ is Cohen-Macaulay and $\tK$ has codimension exactly $s$.
 It follows that $\codim \tK = \codim \tK R= \codim K$. Thus $(\tR, \tK)$
 is a deformation of $(R,\tK R)$, and $\omega_{R/\tK R}\cong \omega_{\tR/\tK}\otimes_{\tR}R$.
 From the surjection $R/\tK R\to R/K$ and the equality of dimensions, we get an inclusion
$$
    \omega_{R/K}\hookrightarrow \omega_{R/\tK R}\cong \omega_{\tR/\tK}\otimes_{\tR}R
$$
that identifies $\omega_{R/K}$ with the set of elements of $\omega_{R/\tK R}$ that are annihilated by $K$.
From this inclusion and the isomorphism $\phi: \omega_{\tR/\tK}\to \tI^{t+1}/\tJ\tI^{t}$  of Theorem~\ref{mu definition}
we derive a map  $\phi': \omega_{R/K}\to I^{t+1}/JI^{t}$.

Next we will show that $\tI^{u} \otimes_{\tR} R \cong I^{u}$ for all $u\leq v+1$. Because $(\tR,\tI)$ is a deformation of $(R,I)$, we
may write $R = \tR/(\underline x)$, where $\underline x$ is a regular sequence on $\tR$ and on $\tR/\tI$. It suffices to show
$\underline x$ is a regular sequence modulo $\tI^{u}$ for $u$ in the given range. Since we know this for $u=1$,
we may do induction on $u$, and it is enough to show
that $\underline x$ is a regular sequence on $\tI^{u-1}/\tI^{u}$.

Fix a set of generators of $\tI$, and their images in $I$. Using these generators we define surjective maps from free modules
$\tF \to \tI$ and $F := \tF \otimes_{\tR} R\to I$ and compute Koszul homology
modules $\tH_{i}:= H_{i}(\tI)$ and $H_{i}:=H_{i}(I)$.

We now form the approximation complexes (\cite[p.470]{HSV1})
\begin{align*}
0\to \tH_{u-1}\otimes \Sym_{0}\tF\to\ \cdots\ \to \tH_{0}\otimes \Sym_{u-1}\tF \to
& \, \tI^{u-1}/\tI^{u}\to 0\\
0\to H_{u-1}\otimes \Sym_{0}F\to\ \cdots\ \to H_{0}\otimes \Sym_{u-1}F \to & \, I^{u-1}/I^{u}\to 0 \, .
\end{align*}
Since $u-1\leq v\leq t$ our hypothesis shows that the modules $\tH_{i}$ are either 0 or are
maximal Cohen-Macaulay $\tR/\tI$-modules whenever $0\leq i\leq u-1$. This implies, in the given range,
that $\underline x$ is a regular sequence on the nonzero $\tH_{i}$, that $H_{i}\cong \tH_{i}\otimes_{\tR}R$, and that the latter modules are  Cohen-Macaulay $R/I$-modules.

Since $u-1\leq v$, both $\tI$ and $I$ satisfy $G_{g+u-1}$, and it follows from \cite[the proofs of 2.5 and 2.3]{HSV1} that
both approximation complexes are exact. Since $\underline x$ is a regular sequence on all the nontrivial $\tH_{i}$ that appear,
and $H_{i}\cong \tH_{i}\otimes_{\tR}R$, the exactness of the complexes implies that $\underline x$ is a regular sequence on $\tI^{u-1}/\tI^{u}$.

This completes the argument that $\tI^{u} \otimes_{\tR} R \cong I^{u}$ for all $u\leq v+1$. From this isomorphism, we see that
$$
\tI^{u}/\tJ \tI^{u-1} \otimes_{\tR} R \cong I^{u}/JI^{u-1}.
$$

 Now let
 $$
\frac{t+1}{2} -\varepsilon \leq u \leq \frac{t+1}{2} +\varepsilon .
$$
Note that $u\leq v+1$ and $t+1-u\leq v+1$ so, by what we have just proven,
\begin{align*}
\tI^{u}/\tJ \tI^{u-1} \otimes_{\tR} R &\cong I^{u}/JI^{u-1}\\
\tI^{t+1-u}/\tJ \tI^{t-u} \otimes_{\tR} R &\cong I^{t+1-u}/JI^{t-u}.
\end{align*}
Theorem~\ref{basic 1} shows that $\omega_{\tR/\tK} \cong \tI^{t+1}/\tJ\tI^{t}$. By the argument at the beginning of this proof,
$\omega_{R/K}$ can be identified with the submodule of $\omega_{\tR/\tK}\otimes_{\tR}R$ consisting of all elements annihilated by $K$.
Thus we obtain the commutative diagram of solid arrows below:
\begin{center}
\begin{tikzpicture}
[every node/.style={scale=1.1},  auto]
\node(A) {};
\node(10) [node distance = 1cm right of A]
{$\frac{\tI^{u}}{\tJ\tI^{u-1}}\otimes \frac{\tI^{t+1-u}}{\tJ\tI^{t-u}}\otimes_{\tR}R$};
\node(20)[node distance=2.5cm, below of=A]{$\frac{I^{u}}{JI^{u-1}}\otimes
\frac{I^{t+1-u}}{JI^{t-u}}$};
\node(11)[node distance=6.6cm, right of=10]{$\frac{\tI^{t+1}}{\tJ\tI^{t}}\otimes_{\tR}R$};
\node(label) [node distance=2.5cm, above of=10]{$(\hbox{Figure 1: definition of }m)$};
\node(21)[node distance=2.5cm, below of=11]{$\frac{I^{t+1}}{JI^{t}}$};
\node(01)[node distance=1.5cm, above of=11]{$\omega_{\tR/\tK}\otimes_{\tR}R$};
\node(-11)[node distance=1.5cm, above of=01]{$\omega_{R/K}$};
\draw[->] (10) to node  {$m(\tI,u,t)\otimes R$} (11);
\draw[->] (20) to node  {$m(I,u,t)$} (21);
\draw[->] (10) to node {$\cong$}  (20);
\draw[->] (11) to node {} (21);
\draw[->] (01) to node {$\cong$}  (11);
\draw[to reversed-to] (-11) to node {}  (01);
\draw[->, dashed, bend left =15] (20) to node[above=30pt, right=15pt] {$m$}  (-11);
\draw[->, bend left = 50] (-11) to node {$\phi'$}  (21);
\end{tikzpicture}
\end{center}
From the left-hand vertical isomorphism we see that the source of the map $m(\tI, u,t)\otimes_{\tR} R$ is annihilated by $K$. Hence its
image in $\tI^{t+1}/\tJ \tI^{t}\otimes_{\tR}R\cong \omega_{\tR/\tK}\otimes_{\tR}R$ is contained in
$\omega_{R/K}$, yielding a map $m$ indicated by the dotted arrow in the diagram.

By our assumption on $v$ there exists $u$ with $(t+1)/2 - \varepsilon \leq u \leq (t+1)/2 + \varepsilon$, and then the
surjectivity of $m(I,u,t)$ implies that $\phi'$ is surjective. To prove that the surjection $\phi'$ is an isomorphism if $K$ is a geometric $s$-residual intersection,
it suffices to verify that the source and target of $\phi'$ are isomorphic locally at every associated prime $P$ of
the $R$-module $\omega_{R/K}$. But we have seen before that $K$ has codimension $s$, hence every such $P$ has codimension $s$, and
therefore cannot contain $I$. It follows that the source and target of $(\phi')_P$ are both isomorphic to the Gorenstein ring $(R/J)_P$.

To prove that $m$ is a perfect pairing, recall that $m(\tI,u,t)$ is a perfect pairing by Theorem~\ref{duality}.
According to Theorem~\ref{basic 1} and Proposition~\ref{new basic}(3), the module $\tI^{u}/\tJ \tI^{u-1}$ is a maximal Cohen-Macaulay module over the Cohen-Macaulay ring $\tR/\tK$. We proved above that $\underline x$ is a regular sequence on $\tR/\tK$, so it is also a regular sequence on $\tI^{u}/\tJ \tI^{u-1}$.
It follows that
$$
\Hom_{\tR}(\tI^{u}/\tJ \tI^{u-1}, \omega_{\tR/\tK}) \otimes_{\tR}R \cong
\Hom_{R}(\tI^{u}/\tJ \tI^{u-1}\otimes_{\tR}R, \omega_{\tR/\tK}\otimes_{\tR}R).
$$
The right hand module is isomorphic to
$$
\Hom_{R}(I^{u}/J I^{u-1}, \omega_{\tR/\tK}\otimes_{\tR}R),
$$
and because $I^{u}/J I^{u-1}$ is annihilated by $K$, this is isomorphic  to
$$
\Hom_{R}(I^{u}/J I^{u-1}, \omega_{R/K}).
$$
Since
\begin{align*}
\Hom_{\tR}(\tI^{u}/\tJ \tI^{u-1}, \omega_{\tR/\tK}) \otimes_{\tR}R &\cong
\tI^{t+1-u}/\tJ \tI^{t-u} \otimes_{\tR}R\\
&\cong
I^{t+1-u}/J I^{t-u} ,
 \end{align*}
there is a
composite isomorphism
$$
I^{t+1-u}/J I^{t-u}  \rTo^{\cong}  \Hom_{R}(I^{u}/J I^{u-1}, \omega_{R/K}) .
$$
The commutativity of the diagram in Figure 1 shows that this isomorphism
is induced by $m$, so we are done.
\end{proof}

\smallskip

\section{Examples and counterexamples on duality} \label{codim2 examples}

\subsection*{Residual intersections of codimension 2  ideals}

\begin{example}[{\bf Explicit duality}]\label{codim 2 perfect}
 Let $R$ be a local Gorenstein ring and $C$ an $(n+1) \times (n+s)$ matrix with entries in $R$, where $n\geq1$ and $s\geq2$.
Suppose that the maximal minors of $C$
generate an ideal $K$ of codimension $s$, the generic value. Set
$t := s-2$ and $M := \coker C$.
Buchsbaum and Eisenbud \cite{BE1}
(see also \cite[Appendix A.2.6]{E}) computed minimal free $R$-resolutions of the first $t+1$ symmetric powers of $M$, and observed that, for $0\leq u\leq t+1$, these are
perfect $R$-modules of codimension $s$, and that the resolutions of $\Sym_{u}(M)$ and
$\Sym_{t+1-u}(M)$ are dual to one another; that is,
$$
\Sym_{t+1-u}(M) \cong \Ext^{s}_{R}(\Sym_{u}(M),\, R) \cong \Hom_{R}(\Sym_{u}(M),\,\omega_{R/K}).
$$

If we assume that the entries of $C$ are in the maximal ideal and the residue field of $R$ is infinite, then, possibly after column 
operations, we may suppose the $(n+1)\times n$ submatrix $A$ consisting of the first $n$ columns of $C$ has the property that the $n\times n$ minors of $A$ generate an
ideal $I$ of codimension 2. (Reason: Since $K$ has codimension $s$,
we see that $N := \coker(C^*)$
is locally free of rank $s-1$ in codimension $<s$ in $R$. It follows from the theory of basic elements that after factoring out $s$ general generators of $N$ we obtain a module of codimension $\geq 2$. This is the module presented by $A^*$.)

In this situation, the ideal $I$ is  strongly Cohen-Macaulay. Huneke~\cite
{H2} showed that $K$ is an $s$-residual intersection of $I$, see also Theorem~\ref{codim2 det}.
In \cite{CNT} the duality statement above is generalized to residual intersections of any strongly Cohen-Macaulay ideal.

In addition, Andy Kustin and the second author observed  (unpublished) that for geometric residual intersections of codimension 2 perfect ideals, the  symmetric power 
$\Sym_{u}(I/J)$ is isomorphic to $I^{u}/JI^{u-1}$ in the range of $u$ that we consider, and we reprove this in
Theorem~\ref{codim2 det} below. This gives a concrete example of our theory. 

Let $B$ be the $(n+1)\times s$ matrix made from the last $s$ columns of $C$, so that
$C =
\begin{pmatrix}
 {A|B}
\end{pmatrix}.
$
Let $J$ be the image of the composite map
$$
R^{s}\rTo^{B} R^{n+1} \cong \bigwedge^{n}R^{n+1*}\rTo^{\bigwedge^{n}A^{*}} \bigwedge^{n}R^{n*} \cong R \, .
$$
By the Hilbert-Burch Theorem, $\coker A \cong \image  \bigwedge^{n}A^{*} \cong I$, and thus
$M= \coker
\begin{pmatrix}
{A|B}
\end{pmatrix}
 \cong I/J$.

\begin{theorem}
\label{codim2 det}
With notation and hypotheses as above,  $K=J:I$ and  thus $K$ is
an $s$-residual intersection of $I$.
Let $\Ibar$ be the image of $I$ in the ring $\Rbar:=R/K$.
If $I+K$ has codimension $\geq s+1$ $($so that $K$ is a
geometric $s$-residual intersection of $I$$)$, then
$$
I^{u}/JI^{u-1} \cong \Ibar^{u} \cong \Sym_{u}(\coker C)
$$
for $0\leq u\leq t+1$ $($interpreting $I^u/JI^{u-1}$ as $\Rbar$ when $u=0$$)$. In particular, $I^{u}/JI^{u-1}$ and $I^{t+1-u}/JI^{t-u}$ have dual, finite free $R$-resolutions.
\end{theorem}

Note that this does
not require the condition $G_{s}$.

\begin{proof} By assumption the codimension of the ideal $K$ of $(n+1)\times (n+1)$ minors of $C$ is $s$, so by
 \cite{BE2}, $\ann (\coker C) = K$. But  $\ann (\coker C) = \ann(I/J) = J:I$.

There are natural surjections
$$
\Sym_{u}(I/J)\twoheadrightarrow  I^{u}/JI^{u-1}  \twoheadrightarrow  \Ibar^{u}.
$$
Recall that the determinantal ideal $K$ is perfect of codimension $s$. Thus, if $K$ is a geometric $s$-residual intersection, then $\Ibar^{u}$ has grade $\geq 1$, and both
$\Ibar^u$ and $\Sym_{u}(I/J)$ are locally free of rank 1 at the associated primes of $\Rbar$. Since  $\Sym_{u}(I/J)$ is a
maximal Cohen-Macaulay $\Rbar$-module, it is torsion free, and thus the two epimorphisms are isomorphisms.
\end{proof}

\end{example}

\smallskip

There are two kinds of hypotheses on the ideal $I$ in Theorem~\ref{general duality}: the condition $G_{g+v}$ on
$I$ itself and the existence of a good deformation $\tI$. We will show in Example~\ref{can't weaken G} that the first cannot be weakened and,
in Examples~\ref{mystery module bis} and \ref{can't drop deformation}, that the second cannot be dropped.
Here we write $w:=g+v$.

\begin{example}[{\bf A codimension 2 perfect ideal satisfying $G_{w-1}$ but not $G_{w}$}]\label{can't weaken G}
The following examples show that, even for licci ideals, the condition $G_{w}$ in Theorem~\ref{general duality} cannot be replaced
by the condition $G_{w-1}$. They are based on the construction explained in Example~\ref{codim 2 perfect}.

By the Hilbert-Burch Theorem, any perfect codimension 2 ideal $I$ with $n+1$ generators is the ideal of $n\times n$ minors
of an $(n+1) \times n$ matrix. Such ideals satisfy the deformation assumption: they are  specializations of the generic ideal of minors, which satisfies the condition $G_{s}$ for every $s$, and all their Koszul homology modules are Cohen-Macaulay (\cite{AH}). (These are the original examples of the licci ideals mentioned in the introduction.)

Let $2\leq w\leq s$, let $R$ be a power series ring $k\lbracket \seq x {s}\rbracket$, and let $M_{s}$ be the $s\times(2s-1)$ ``Macaulay matrix'', where the $i$-th principal diagonal entries are $x_{i}$ and the other
entries are 0 (we illustrate with the case $s=5$):
$$M_{s}=\begin{pmatrix}
x_{1}&x_{2}&x_{3}&x_{4}&x_{5}&0&0&0&0\\
0&x_{1}&x_{2}&x_{3}&x_{4}&x_{5}&0&0&0\\
0&0&x_{1}&x_{2}&x_{3}&x_{4}&x_{5}&0&0\\
0&0&0&x_{1}&x_{2}&x_{3}&x_{4}&x_{5}&0\\
0&0&0&0&x_{1}&x_{2}&x_{3}&x_{4}&x_{5}\\
\end{pmatrix}.
$$
We define the ideal $I_{s,w}$ to be the ideal of $(s-1)\times (s-1)$ minors of the matrix $N_{s,w}$ made from columns 2 through $s$ of $M_{s}$ by
replacing the entry $x_{w}$ of the $(s-w+1)$ row with 0; for example  $I_{5,3}$ is  the ideal of $4\times 4$ minors of
$$N_{5,3}=\begin{pmatrix}
x_{2}&x_{3}&x_{4}&x_{5}\\
x_{1}&x_{2}&x_{3}&x_{4}\\
0&x_{1}&x_{2}&0\\
0&0&x_{1}&x_{2}\\
0&0&0&x_{1}\\
\end{pmatrix}.
$$
It is easy to see that $I_{s,w}$ is a perfect ideal of codimension 2, and by computing the codimensions of the ideals of lower order minors of $N_{s,w}$ one sees that $I_{s,w}$ satisfies $G_{w-1}$ but not $G_{w}$.

We consider the cases $4\leq s\leq 7$, and we construct an $s$-residual intersection $K_{s,w} = J_{s,w}:I_{s,w}$ of $I_{s,w}$ as follows:

Let $M'_{s,w}$ be the matrix constructed from $M_{s}$ by replacing columns 2 up to $s$ with the matrix $N_{s,w}$, and adding the variable $x_{w}$ to the entries in the
$(s-w+1)$ row and both the first and $(2s-w+1)$ columns. Thus
$$
M'_{5,3}=\begin{pmatrix}
x_{1}&x_{2}&x_{3}&x_{4}&x_{5}&0&0&0&0\\
0&x_{1}&x_{2}&x_{3}&x_{4}&x_{5}&0&0&0\\
x_{3}&0&x_{1}&x_{2}&0&x_{4}&x_{5}&x_{3}&0\\
0&0&0&x_{1}&x_{2}&x_{3}&x_{4}&x_{5}&0\\
0&0&0&0&x_{1}&x_{2}&x_{3}&x_{4}&x_{5}\\
\end{pmatrix}.
$$
Macaulay2 computations show that for $s\leq 7$ and any $2\leq w\leq s$,
the ideal $K_{s,w}$ generated by the maximal minors of $M'_{s,w}$ has the generic codimension, $s$, and we conjecture that this is true in general.

Assuming that $K_{s,w}$ has codimension $s$,  we can use Theorem~\ref{codim2 det} to show that $K_{s,w}$ is an $s$-residual intersection of $I_{s,w}$. Explicitly,
let $P_{s,w}$ be the submatrix of $M'_{s,w}$ consisting of the columns not in $N_{s,w}$; for example if $s=5,w=3$ then
$$
P_{s,w}=\begin{pmatrix}
x_{1}&0&0&0&0\\
0&x_{5}&0&0&0\\
x_{3}&x_{4}&x_{5}&x_{3}&0\\
0&x_{3}&x_{4}&x_{5}&0\\
0&x_{2}&x_{3}&x_{4}&x_{5}\\
\end{pmatrix}.
$$
After rearranging the columns of $M'_{s,w}$ we may write $M'_{s,w} = \begin{pmatrix}
 {N_{s,w}|P_{s,w}}
 \end{pmatrix}$. Thus we may apply
Theorem~\ref{codim2 det} to conclude that, if we take $J_{s,w}$ to be the ideal generated by the entries of the matrix
obtained by multiplying $P_{s,w}$ by the row of signed maximal minors of $N_{s,w}$, we will have $K_{s,w} = J_{s,w}:I_{s,w}$,
an $s$-residual intersection of $I_{s,w}$. For example $J_{5,3}$ is generated by the entries of the row vector
$$
\begin{pmatrix}
\Delta_{1}& -\Delta_{2}& \Delta_{3}& -\Delta_{4}& \Delta_{5}
\end{pmatrix}
\cdot P_{5,3} \, ,
$$
where $\Delta_{i}$ is the determinant of the matrix obtained from $N_{5,3}$ by omitting the $i$-th row.

We now consider Theorem~\ref{general duality} in the cases of the ideals $I_{s,w}$ and $J_{s,w}$ with $4\leq s\leq 7$. We have
$g=2$, and we consider values of $v$ in the range specified in the theorem, so that
$w = g+v = 2+v$ lies in the range $(s+1)/2 \leq w\leq s$. As explained above, the ideal $I_{s,w}$ satisfies the deformation hypothesis of the
theorem, and satisfies $G_{g+v -1} = G_{w-1}$ but not $G_{g+v}$. This has the effect of adding 1 to the lower bound, and subtracting 1 from the upper bound,
for $u$ in Theorem~\ref{general duality}. Thus for the triples $(s,w,u)$ in the list
$$
(4,3,1), (5,3,2), (5,4,1), (6,4,2), (6,5,1), (7,4,3), (7,5,2), (7,6,1),
$$
the theorem does not guarantee duality. Of course the same goes for the ``dual'' triples $(s,w,s-1-u)$.

Computations in Macaulay2 
show that, indeed, duality does not hold in these cases. 
To check this, we 
compute resolutions of
$I^u/JI^{u-1}$ and $I^{s-1-u}/JI^{s-2-u}$. When 
the total Betti numbers in the minimal resolutions over $R$ of these two modules are not dual to one
another, the duality clearly does not hold. It turns out that 
this occurs in each case. (We note that in other cases, where these have
 the same graded Betti numbers as in the generic case, they must be
reductions from the generic case, and thus dual to one another.)

Consider, as an example, the case $(s,w,u) = (5,4,1)$:
According to Macaulay2, the Betti table of the minimal graded free resolution of $I^{u}/JI^{u-1} = I/J$ is
\begin{verbatim}
 total: 5 9 84 180 135 35
     4: 5 9  .   .   .  .
     5: . .  .   .   .  .
     6: . .  .   .   .  .
     7: . .  .   .   .  .
     8: . . 84 180 135 35
\end{verbatim}
while the Betti table of the minimal graded free resolution of $I^{s-1-u}/JI^{s-2-u} = I^{3}/JI^{2}$ is
\begin{verbatim}
total: 35 136 188 106 28 9
   12: 35 136 183  87  1 .
   13:  .   .   .   .  . .
   14:  .   .   .   .  . .
   15:  .   .   5  19 27 8
   16:  .   .   .   .  . 1
\end{verbatim}
By local duality, the dual,  $\Hom(I/J, \omega_{R/(J:I)})$ of $I/J$ is isomorphic up to a shift in grading, to $\Ext^{5}_{R}(I/J,R)$. From the first resolution we see that the presentation of this module (as a graded module or over the power series ring) has 35 generators and 135 relations, whereas from the second Betti table
we see that the minimal presentation of $I^{3}/JI^{2}$ has 35 generators and 136 relations; thus $I/J$ is not dual to $I^{3}/JI^{2}$.

\end{example}

\begin{example}[{\bf duality not given by multiplication}]\label{mystery module}
Let $R = k\lbracket x,y,z \rbracket \supset I = (x,y)^{2}$ where $k$ is an infinite field.
The pair $(R,I)$ admits a deformation $(\tR, \tI)$, where
$\tR = k\lbracket z_{1,1},\dots,z_{2,3}, z \rbracket $, the ideal $\tI$ is generated by the $2\times 2$ minors of the generic matrix
$$Z :=\begin{pmatrix}
z_{1,1} &z_{1,2} &z_{1,3} \\
z_{2,1} &z_{2,2} &z_{2,3} \\
\end{pmatrix},
$$
and the specialization $\tR \to R$ sends $Z$ to the matrix
$$\begin{pmatrix}
x&y&0 \\
0&x&y
\end{pmatrix}.
$$
If $J$ is generated by 3 sufficiently general homogeneous polynomials
of degree 3 in $I$, then the ideal $K = J:I$ is a 3-residual intersection, so by Theorem~\ref{Huneke} or Theorem~\ref{general duality},
$I/J$ is self-dual.

Computation shows that $K = (x,y,z)^{3}$. Thus $\omega_{R/K} =\Hom_{k}(R/K,k)$ has Hilbert function $6,3,1$. The surjection
 $\phi': \omega_{R/K} \to I^{2}/JI$ described in Theorem~\ref{general duality}(1) is, in this case, the dual of  the
 inclusion  $(x,y)(R/K) \hookrightarrow R/K$. Thus the Hilbert function of $I^{2}/JI$ is $5,2$. We see that, unlike in Theorem~\ref{duality}, 
 there is no injection $I^{2}/JI \to \omega_{R/K}$ because the socle of the first module is 2-dimensional.

We also claim that, unlike in the situation of Theorem~\ref{duality}, the self-duality map of $I/J$ is not given by multiplication. Indeed, there can be no perfect pairing $I/J\otimes I/J \to I^{2}/JI$ because the target
is annihilated by $(x,y,z)^{2}$ while $I/J$ is not.

By Theorem~\ref{general duality} there is a duality map $I/J\otimes I/J \to \omega_{R/K}$,
and the multiplication map $I/J\otimes I/J \to I^{2}/JI$ is the composite of this map
with the surjection $\phi': \omega_{R/K} \to I^{2}/JI$ described in the same Theorem.
Moreover, the duality map is a symmetric surjection, induced by the corresponding
duality map in the generic case. Thus $R/K \oplus I/J \oplus \omega_{R/K}$ is a commutative standard graded Gorenstein algebra
over $R/K$ and $R/K \oplus I/J \oplus I^{2}/JI$ is a proper homomorphic image.

It is shown in~\cite{CNT} that, for residual intersections of strongly Cohen-Macaulay ideals, such as the one in this example, the duality between symmetric powers is always induced by multiplication.
\end{example}

\subsection*{Residual intersections of codimension 3 ideals}
Even when $I$ itself satisfies the condition $G_{s}$, the conclusion
of  Theorem~\ref{general duality} may fail if $I$ does not have a deformation whose Koszul homology modules
are Cohen-Macaulay. 

\begin{example}[{\bf No surjection $\omega_{R/K}\twoheadrightarrow I^{2}/JI $}] \label{mystery module bis}
Let $R = k\lbracket x_{1}, \dots, x_{5}\rbracket$ where $k$ is an infinite field, and let $I$ be the ideal of $2\times 2$ minors of the matrix
$$
\begin{pmatrix}
x_{1} & x_{2} & x_{3} & x_{4} \\
x_{2} & x_{3} & x_{4} & x_{5}
\end{pmatrix}\,.
$$
If we take $J$ to be the ideal generated by 4 sufficiently general cubic forms in $I$, then by Theorem~\ref{duality}, the multiplication map
$I/J\otimes I/J \to I^{2}/JI$ is a perfect pairing. We claim that, unlike in the situation of Theorem~\ref{general duality}, there is \emph{no} surjection
$\omega_{R/K}\twoheadrightarrow I^{2}/JI $: computation shows that  $I^{2}/JI$ requires 20 generators, whereas $\omega_{R/K}$ requires only 16.
Of course by Theorem~\ref{duality}, there is a natural injection $I^{2}/JI \hookrightarrow \omega_{R/K}$ such that the composite pairing is also a perfect pairing. However, unlike the situation in \cite{CNT}, the multiplication  $I/J \otimes I/J \to \Sym_{2}(I/J)$ is not a perfect pairing.

 Could there be some ``mystery module'' $X$ and maps
$$
\omega_{R/K} \longleftarrow X \longrightarrow I^{2}/JI
$$
that explains both Examples~\ref{mystery module} and \ref{mystery module bis}?

\end{example}

\begin{example}[{\bf No perfect pairing}]
\label{can't drop deformation} 

Let $s=5$ and take $I$ to be the ideal of the nondegenerate rational quartic curve in $\P^{4}$
or of the Veronese surface in $\P^{5}$ that is, the ideal of $2\times 2$ minors of either
$$
\begin{pmatrix}
 x_{0}& x_{1}& x_{2}& x_{3}\\
  x_{1}& x_{2}& x_{3}& x_{4}
\end{pmatrix}
\hbox { or }
\begin{pmatrix}
 x_{0}& x_{1}& x_{2}\\
  x_{1}& x_{3}& x_{4}\\
  x_{2}&x_{4}&x_{5}
\end{pmatrix}.
$$
These ideals satisfy $G_{5}$ and admit a 5-residual intersection $K = J:I$ where $J$ is generated by 5 general cubic forms in $I$. For each of the 
two ideals
$I$ above,  all Koszul homology modules are Cohen-Macaulay except the first, and they satisfy the sliding depth condition for Koszul homology. Nevertheless, Macaulay2 computation
 shows that the modules
$I/J$ and $I^{2}/JI$ are not dual to one another.

Computation shows that there is no useful duality among the first three symmetric powers either: $\Sym_{3}(I/J) \not \cong \omega_{R/K} ,$
$$
\Hom(\Sym_{2}(I/J), \omega_{R/K}) \not \cong I/J,\quad\Hom(I/J, \omega_{R/K}) \not \cong \Sym_{2}(I/J),
$$
 and likewise for dualizing into $\Sym_{3}(I/J)$.
\end{example}

\vspace{-0.1cm}

\subsection*{Residual intersections of a codimension 5 ideal}
\begin{example}\label{Johnson} Let
$R=  k\lbracket x_{1}, \dots, x_{10}, y_{1},\dots, y_{5}\rbracket$, where $k$ is an infinite field, and let $I$ be the ideal generated by the $4\times 4$ Pfaffians of
the generic skew symmetric matrix
$$
M =
\begin{pmatrix}
 0 &x_{1}&x_{2}&x_{3}&x_{4}\\
 -x_{1} &0&x_{5}&x_{6}&x_{7}\\
 -x_{2} &-x_{5}&0&x_{8}&x_{9}\\
 -x_{3} &-x_{6}&-x_{8}&0&x_{10}\\
  -x_{4} &-x_{7}&-x_{9}&-x_{10}&0\\
\end{pmatrix}
$$
together with the entries of the vector
$$
\begin{pmatrix}
 y_{1}&y_{2}&y_{3}&y_{4}&y_{5}
\end{pmatrix}
M.
$$
This is a prime ideal of codimension 5, and is a complete
intersection locally on the punctured spectrum, so $I$ satisfied $G_{15}$. Mark Johnson found this ideal as an example where
each of $R/I, R/I^{2}$ and $R/I^{3}$ are Cohen-Macaulay (and thus of depth 10), while $R/I^{4}$ has depth 6.  The ideal $I$ thus satisfies the Strong Hypothesis with $s=7$, but not $s=8$.

Let $J\subset I$ be generated by 7 general quadrics in $I$, and let $K = J:I$.
As in the previous example,
computation shows that there is no useful duality among the first three symmetric powers: $\Sym_{3}(I/J) \not \cong \omega_{R/K} ,$
$$
\Hom(\Sym_{2}(I/J), \omega_{R/K}) \not \cong I/J,\quad\Hom(I/J, \omega_{R/K}) \not \cong \Sym_{2}(I/J),
$$
 and likewise for dualizing into $\Sym_{3}(I/J)$.

 However, by Theorems~\ref{duality} and \ref{basic 1}, the multiplication map does give a perfect pairing
$$
I/J \otimes I^{2}/JI \rTo I^{3}/JI^{2} \cong \omega_{R/K}.
$$
\end{example}
\bigbreak

\section{Complementary module and socle}\label{socle section}

We begin by reminding the reader of the classic description of the socle of a complete intersection of equicharacteristic 0.
Recall that if $k$ is a field and $R$ is a complete local $k$-algebra, then the \emph{K\"ahler different} $\fD_K(R/k)\subset R$ is the
0-th Fitting ideal of the universally finite module of differentials $\Omega_{R/k}$; for example, if
$R = k\lbracket x_{1},\dots,x_{d}\rbracket/(a_{1}, \dots, a_{d})$, then $\fD_K(R/k)$ is the ideal generated by the Jacobian determinant
$$
\Delta = \det
\begin{pmatrix}
\frac{ \partial a_1}{\partial x_1} &\dots & \frac{ \partial a_1}{\partial x_d}\\
\vdots&\ddots&\vdots\\
\frac{ \partial a_d}{\partial x_1} &\dots & \frac{ \partial a_d}{\partial x_d}\\
\end{pmatrix}.
$$

\medskip

\begin{theorem}\label{classic socle}
If $k$ is a field of characteristic 0 and $R$ is a complete local $k$-algebra, then $\fD_K(R/k)$ is nonzero if and only
if $R$ is a 0-dimensional complete intersection, and in this case $\fD_K(R/k)$ is the socle of $R$.
\end{theorem}

This result was proven by Scheja and Storch~\cite{SS2} (see also Kunz~\cite{K1}). The basic ideas are due to Tate~\cite[Appendix]{MR}.
For the reader's convenience we give the classic arguments in Appendix~\ref{Sclassic socle}.

Throughout this section we suppose that $R$ is a local Gorenstein ring of dimension $d$ with maximal ideal $\gm$,  that $I\subset R$ is an ideal of codimension $g$, and
$K = J:I$ is an $s$-residual intersection, and we set $t =s-g$. If $T$ is any ring we write $Q(T)$ for the total ring of quotients obtained
by inverting every nonzerodivisor in $T$.

We want to identify the socle of $\omega_{R/K}$ in the
case $\dim R/K = d-s = 0$. We will show that, under suitable hypotheses, the socle of $\omega_{R/K}\cong I^{t+1}/JI^{t}$
 is generated by the image of the
Jacobian determinant of  generators of $J$ (Theorems~\ref{socle is jacobian} and \ref{Jacobian containment}).

 We begin with a general result about the socle of the local cohomology module $\rH^{d-s}_{\gm}(R/JI^{t})$.

\begin{theorem}\label{simplesocle} If $I$ satisfies the Strong Hypothesis with respect to $s$,
then $\, \rH^{d-s}_{\gm}(R/JI^{t})$ has a simple socle and the natural map
 $$
\rH^{d-s}_{\gm}(\omega_{R/K})\cong \rH^{d-s}_{\gm}(I^{t+1}/JI^t) \rTo \, \rH^{d-s}_{\gm}(R/JI^{t})
 $$
 is injective. In particular, the two modules have the same socle.
\end{theorem}

\begin{proof} Recall that $I^{t+1}/JI^t \cong \omega_{R/K}$ has dimension $d-s$ by Theorem~\ref{basic 1}. 
Hence $\, \rH^{d-s}_{\gm}(I^{t+1}/JI^t)  \ne 0$ and has nonzero socle. This
module embeds into $\, \rH^{d-s}_{\gm}(R/JI^{t})$ because 
${\rm depth} \, R/I^{t+1} \geq d-s$. Thus it remains to show that the socle of $\, \rH^{d-s}_{\gm}(R/JI^{t})$
is simple.

If $t=0$ the result is the usual duality for complete intersections, so we assume that $t>0$. We may harmlessly suppose that $k$ is infinite and that the generators $a_{1},\dots, a_{s}$ of
$J$ are general. Set $J_{i} = (a_{1},\dots, a_{i})$ and $K_i = J_i:I$.
By Lemma~\ref{principal ideal} the ideal $J_{i}:I$ is a geometric $i$-residual intersection for $g\leq i \leq s-1$.

From Proposition \ref{new basic}(2) we have an exact sequence
$$
 0\rTo \frac{I^{t}}{J_{s-1}I^{t-1}} \rTo^{a_{s}} \frac{R}{J_{s-1}I^{t}} \rTo\frac{R}{J_{s}I^{t}} \rTo 0 \, .
$$
The module in the middle has depth at least $d-s+1$ according to Proposition
\ref{new basic}(4). Hence the long exact sequence of local cohomology
gives an embedding
$$
\rH^{d-s}_{\gm}(R/JI^{t}) \subset \rH^{d-s+1}_{\gm}(I^t/J_{s-1}I^{t-1}).
$$
Now the theorem follows because, by
Theorem~\ref{basic 1},
$$
I^t/J_{s-1}I^{t-1} \cong \omega_{R/K_{s-1}}
$$
and $R/K_{s-1}$ is Cohen-Macaulay, so
$\rH^{d-s+1}_{\gm}(I^t/J_{s-1}I^{t-1})$ has simple socle.
\end{proof}

If $k$ is a field of characteristic 0 and $T$ is a local finite-dimensional $k$-algebra, then the trace homomorphism $\Tr_{T/k}$ is nonzero and annihilates
the maximal ideal, since the maximal ideal consists of nilpotent elements. Thus $\Tr_{T/k}$ generates the socle of
$\omega_{T} = \Hom_{k}(T,k)$.

From this point on we will assume that $R = k\lbracket \seq x d \rbracket$ is a power
series ring in $d$ variables over a field $k$ of characteristic 0. To identify the socle of $I^{t+1}/JI^{t}$ with the Jacobian determinant $\Delta$ of a given set of generators of $J$
in the case $s=d$ (Theorem~\ref{socle is jacobian}), we begin by making explicit the composite isomorphism
$$
I^{t+1}/JI^{t}\cong \omega_{R/K}\cong \Hom_{k}(R/K, k).
$$
We show that if $\Delta$ is in $I^{t+1}$ then, under this isomorphism,
 $\Delta$ corresponds to the trace homomorphism $\Tr_{(R/K)/k}$. This is accomplished in Theorem~\ref{Dedekind complementary module}.
  In order to do this, we establish a  result about the Dedekind complementary module of $R/K$ that
  requires $R/K$ to be reduced, and holds for $s<d$.

\begin{lemma}\label{derivative lemma}
 Let $k$ be a field of characteristic 0, and let $R=k\lbracket \seq x d\rbracket$.
 Let $T = R/L$ be a reduced Cohen-Macaulay  factor ring of $R$
 with $\codim L = s-1$. Let
 $a_1,\dots, a_{s-1}$ be elements of $L$ that generate $L$ generically, and let
 $a_s\in \gm$ be a nonzerodivisor modulo $L$. Let $x_1,\dots, x_d$ be general variables of $R$. Set
\begin{align*}
A_{i} &= k\lbracket x_i,\dots, x_d \rbracket \\
A' &= k\lbracket a_s,x_{s+1},\dots, x_d \rbracket \\
\Delta_i &= \det
\begin{pmatrix}
\frac{ \partial a_1}{\partial x_1} &\dots & \frac{ \partial a_1}{\partial x_i}\\
\vdots&\ddots&\vdots\\
\frac{ \partial a_i}{\partial x_1} &\dots & \frac{ \partial a_i}{\partial x_i}
\end{pmatrix} \, .
\end{align*}

We have$:$
 \begin{enumerate}
\item $\Omega_{\Q(T)/A_{s+1}} := Q(T) \otimes_T \Omega_{T/A_{s+1}}$ is a free $Q(T)$-module 
of rank 1 generated by $dx_s$.
\end{enumerate}
For any differential form $df$ we write $df/dx_s$ for the ratio as
elements of $\Omega_{\Q(T)/A_{s+1}}$.
\begin{enumerate}
\setcounter{enumi} 1
\item
 $
 \Delta_s = \frac{d a_s}{dx_s} \Delta_{s-1}.
 $
 \item
 $
\fC(T/A_s) = \frac{d a_s}{dx_s}\; \fC(T/A').
$
\end{enumerate}
\end{lemma}

\begin{proof} The $Q(T)$-module
$\Omega_{Q(T)/A_{s+1}}$ is presented
by the transpose of the $(s-1)\times s$ matrix
$$
\Theta =
\begin{pmatrix}
\frac{ \partial a_1}{\partial x_1} &\dots & \frac{ \partial a_1}{\partial x_s}\\
\vdots&\ddots&\vdots\\
\frac{ \partial a_{s-1}}{\partial x_1} &\dots & \frac{ \partial a_{s-1}}{\partial x_s}
\end{pmatrix}\,.
$$
We write $\Theta_i$ for $(-1)^i$ times the determinant of the $(s-1)\times (s-1)$ submatrix of $\Theta$ omitting the $i$-th column. Note that $\Delta_{s-1} = (-1)^s\Theta_{s}$.

\smallbreak\noindent
(1) Because  $\seq x d$ are general, the ring $A_{s}= k\lbracket x_s,\dots, x_d \rbracket$ is a  Noether normalization
of the reduced, equidimensional ring $T$, so the $Q(T)$-module
$\Omega_{Q(T)/k} := Q(T)\otimes_T\Omega_{T/k}$ is  free of rank $d-s+1$ with basis $dx_s, \ldots,dx_d$.
Thus $\Omega_{Q(T)/A_{s+1}}$ is  free of rank 1 with basis $dx_s$ as claimed.

\smallbreak\noindent (2)
It follows that $\Theta$ has rank $s-1$. Moreover,  the vector
$$
\begin{pmatrix}
\frac{ dx_1}{dx_s} \\
\vdots\\
\frac{ dx_s}{dx_s}
\end{pmatrix}
$$
is in $\ker \Theta$.
Of course $\Theta$ also annihilates the vector
$$
\begin{pmatrix}
\Theta_1 \\
\vdots\\
\Theta_s
\end{pmatrix}\,,
$$
and because the entries of either vector generate the unit ideal in $Q(T)$, the two vectors
$$
\begin{pmatrix}
\Theta_1 \\
\vdots\\
\Theta_s
\end{pmatrix}
\hbox{\quad and \quad}
\begin{pmatrix}
\frac{ dx_1}{dx_s} \\
\vdots\\
\frac{ dx_{s}}{dx_s}\\
\end{pmatrix}
$$
are proportional. Since $dx_s/dx_s = 1$ we get
$$
\Theta_i = \Theta_s \frac{dx_i}{dx_s} 
$$
for $i= 1, \dots, s$.

By the chain rule,
$$
\frac{da_s}{dx_s} = \sum_{i=1}^s\frac{\partial a_s}{\partial x_i} \frac{dx_i}{dx_s} \, ,
$$
so
$$
 \Theta_s \frac{da_s}{dx_s} = \sum_{i=1}^s \frac{\partial a_s}{\partial x_i}  \Theta_i
$$
in
$Q(T)$.
Expanding $\Delta_s$ along the last row we get
\begin{align*}
\Delta_s = (-1)^s \sum_{i=1}^s \Theta_i \frac{\partial a_s}{\partial x_i}
&=  (-1)^s\Theta_s \frac{da_s}{dx_s}\\
&= \Delta_{s-1}\frac{da_s}{dx_s}
\end{align*}
as required.

 \smallbreak\noindent (3) Since $a_{s}\in \gm$ is regular on $T$ and $\seq x d$ are general, the ring
 $A'$ is another Noether normalization of $T$. By \cite[9.2]{K2}
 $$
 \fC(T/A_s) \, dx_s\wedge\cdots\wedge dx_d = \fC(T/A') \, da_s\wedge\cdots\wedge dx_d
 $$
 in
 $$
 \bigwedge^{d-s+1}\Omega_{Q(T)/k}.
 $$
 Since $\Omega_{Q(T)/k}$ is a free $Q(T)$-module with basis $dx_s, \dots, dx_d$,
 $$
 da_s\wedge dx_{s+1}\wedge\cdots\wedge dx_d = \frac{da_s}{dx_s} dx_s\wedge dx_{s+1}\wedge \cdots\wedge dx_d.
 $$
\end{proof}

The next theorem is one of our main results. It gives an explicit description of the complementary 
module of residual intersections.

\begin{theorem}\label{Dedekind complementary module}
 Let $k$ be a field of characteristic 0, and let $R = k\lbracket \seq x d\rbracket$. Let $I\subset R$ be an ideal satisfying the Strong Hypothesis
 for some $s<d$, and let
 $J:I$ be a geometric $s$-residual  intersection such that
 $\Rbar = R/(J:I)$ is reduced. Let $a_1,\dots,a_s$  be general elements in $J$. Let $x_1,\dots, x_d$ be general variables in $R$ and write $A = k\lbracket x_{s+1}, \dots, x_d \rbracket $. We have
 $$
 I^{t+1}\Rbar = \Delta \;  \fC(\Rbar/A),
 $$
 where $\Delta$ is the Jacobian determinant
 $$
\Delta = \det
\begin{pmatrix}
\frac{ \partial a_1}{\partial x_1} &\dots & \frac{ \partial a_1}{\partial x_s}\\
\vdots&\ddots&\vdots\\
\frac{ \partial a_s}{\partial x_1} &\dots & \frac{ \partial a_s}{\partial x_s}\\
\end{pmatrix}.
$$
\end{theorem}

\bigbreak


 \begin{proof}
Since $R$ is a domain we must have $g>0$.
For $i\geq g-1$ we set:
\begin{align*}
 J_i &:= (a_1,\dots,a_i)\subset I;\\
 R_i&:= R/(J_i:I);\\
 A_i&:=k\lbracket x_{i+1},\dots,x_d \rbracket ;\\
A'_i&:=k\lbracket a_{i+1},x_{i+2},\dots,x_d \rbracket ;\\
 \Delta_i &:= \det
\begin{pmatrix}
\frac{ \partial a_1}{\partial x_1} &\dots & \frac{ \partial a_1}{\partial x_i}\\
\vdots&\ddots&\vdots\\
\frac{ \partial a_i}{\partial x_1} &\dots & \frac{ \partial a_i}{\partial x_i}
\end{pmatrix}.
\end{align*}
If $i=g-1$ then $J_{i}:I = J_{i}$ is generated by the regular sequence $\seq a {i}$. If $i\geq g$ then by Lemma~\ref{principal ideal}, the ideal $J_i:I$ is a geometric $i$-residual intersection.
By Theorem~\ref{basic 1}, the ring $R_i$ is Cohen-Macaulay of dimension $d-i$.
It follows that the geometric $i$-residual intersection $J_i:I$ is generically generated by
$a_1, \ldots, a_i$. Moreover by Proposition~\ref{new basic}(1), the element $a_{i+1}$ is 
regular on $R_i$ for $i \leq s-1$. Proposition~\ref{red lemma} shows that the ring $R_i$ is reduced.
Finally, Theorem~\ref{basic 1} and Proposition~\ref{new basic}(5) give $\omega_{R_i} \cong I^{i-g+1}R_i$
for any $i$.

Since the $\seq x d$ are general, the ring
$A_i$ is a Noether normalization of $R_i$. Since
$a_{i+1}\in\gm$ is a nonzerodivisor on $R_i$, the ring
$A_i'$ is also a Noether normalization of $R_i$.

By induction on $i = g-1,\dots, s$ we prove that
$$
 I^{i-g+1}R_i = \Delta_i \; \fC(R_i/A_i).
$$
The case $i=s$ is the statement of the theorem.

If $i=g-1$ the assertion is that $R_i =\Delta_i \fC(R_i/A_i)$, or equivalently
that $\fC(R_i/A_i) = \Delta_i^{-1} R_i$. This is
classically known because
$R_i = R/(a_1, \dots, a_i)$ and $a_1,\dots, a_i$ is a regular sequence;
we will give a self-contained proof of this fact in Section 8, see Corollary~\ref{Dedekind complementary module3}.

Now take $i\geq g$ and assume the result is known for $i-1$. Consider the following diagram
that will be explained below:
\bigbreak
\noindent
\begin{tikzpicture}
[every node/.style={scale=.75},  auto]
\node(00) {$I^{i-g}R_{i-1}$};
\node(01)[node distance=3.5cm, right of=00]{$\Delta_{i-1}\fC(R_{i-1}/A_{i-1})$};
\node(02)[node distance=4cm, right of=01]{$\Delta_{i}\fC(R_{i-1}/A'_{i-1})$};
\node(03)[node distance=4.5cm, right of=02]{$\Delta_{i}\fC(R_{i-1}/A'_{i-1})\Tr_{R_{i-1}/A'_{i-1}}$};
\node(04)[node distance=1.5cm, below of=03]
{$\Delta_{i}\Hom_{A'_{i-1}}({R_{i-1},A'_{i-1}})$};
\node(14)[node distance=1.5cm, below of=04]
{$\Hom_{A'_{i-1}}({R_{i-1},A'_{i-1}})$};
\node(24)[node distance=1.5cm, below of=14]
{$\Hom_{A_{i}}({R_{i-1}/(a_i),A_{i}})$};
\node(34)[node distance=1.5cm, below of=24]
{$\Hom_{A_{i}}({R_{i},A_{i}})$};
\node(44)[node distance=1.5cm, below of=34]
{$\Delta_{i}\Hom_{A_{i}}({R_{i},A_{i}})$};
\node(20)[node distance=4.5cm, below of=00]
{$I^{i-g}R_{i-1}/a_iI^{i-g}R_{i-1}$};
\node(40)[node distance=3.0cm, below of=20]
{$I^{i-g+1}R_{i}$};
\node(42)[node distance=3.3cm, right of=40]
{$\Delta_i\fC(R_i/A_i)$};
\node(43)[node distance=4.0cm, right of=42]
{$\Delta_i\fC(R_i/A_i)\Tr_{R_i/A_i}$};

\draw[->] (02) to node {$\cong$}  (03);
\draw[->] (42) to node {$\cong$}  (43);
\draw[->] (14) to node [right=3pt]{$\cong$}  (04);
\draw[->] (34) to node {$\cong$}  (44);
\draw[->>] (00) to node [left=3pt]{mod $a_i$}  (20);
\draw[->>] (14) to node {mod $a_i$}  (24);
\draw[>->] (40) to node {}  (20);
\draw[>->] (34) to node [right=3pt] {$\varepsilon$}  (24);

\draw[double distance=2pt][-] (00) to node {ind.hyp.}  (01);
\draw[double distance=2pt][-] (01) to node {7.3}  (02);
\draw[double distance=2pt][-] (03) to node {}  (04);
\draw[double distance=2pt][-] (43) to node {}  (44);

\draw[->, dashed] (20) to node {$\phi$}  (24);
\draw[->, dashed] (40) to node {$\psi$}  (42);

\end{tikzpicture}
\bigbreak

By the induction hypothesis and Lemma~\ref{derivative lemma}
we have
$$
I^{i-g}R_{i-1} = \Delta_{i-1}\fC(R_{i-1}/A_{i-1}) = \Delta_i\fC(R_{i-1}/A'_{i-1}).
$$
By Proposition~\ref{new basic}(1) the ideal $I^{i-g}R_{i-1}$ has positive grade,
so $\Delta_i$ is a nonzerodivisor
in $R_{i-1}$. The arrow marked mod $a_{i}$ on the right in the 
diagram is surjective because $R_{i-1}$ is a free $A'_{i-1}$-module.

The isomorphism $\phi$ is induced by the first row.
In the inclusion of
$I^{i-g+1}R_i \cong \omega_{R_i}$
in
$I^{i-g}R_{i-1}/a_iI^{i-g}R_{i-1} \cong\omega_{R_{i-1}/(a_i)}$
the first module is the annihilator of $L := \ker(R_{i-1}/(a_i) \twoheadrightarrow R_i)$;
see Corollary~\ref{reduction lemma}.
Similarly, we take
$$
\varepsilon: \Hom_{A_i}(R_i, A_i)\hookrightarrow
\Hom_{A_i}(R_{i-1}/(a_i), A_i)
$$
to be the map
induced by the surjection $R_{i-1}/(a_i) \twoheadrightarrow R_i$, so
the source of $\varepsilon$ is the annihilator of $L$ in the target of $\varepsilon$.
Since $R_i$ is generically
a finite separable extension of $A_i$ defined by the
vanishing of $a_1,\dots, a_i$, the element $\Delta_i$ is a nonzerodivisor of $R_i$.
Thus $\phi$ induces an isomorphism $\psi$ in the diagram.

We next will show that, regarded as a map of subsets of $Q(R_i)$, the map $\psi$ is the identity.
The source of $\psi$ contains a nonzerodivisor
by Proposition~\ref{new basic}(1). We may write it as
 the image of an element $v\in I^{i-g+1}$.
Since $\Delta_{i}$ is a nonzerodivisor on $R_{i}$, both source and target of $\psi$ are
fractional ideals containing nonzerodivisors, so $\psi$ is multiplication by
some element in $Q(R_i)$.
 To show that $\psi$ is the identity it suffices to show that $\psi(u) = u$ for some
nonzerodivisor $u\in I^{i-g+1}R_i$. We take $u$ to be the image of $\Delta_i v$ in $I^{i-g+1}R_i$.

Recall that $L\subset R_{i-1}/(a_i)$.
Since $I^{i-g+1}L \subset IL = 0$  we have $vL = 0$.
Since $L = \ker(R_{i-1}/(a_i) \twoheadrightarrow R_i)$ and both $R_{i-1}/(a_i)$ and $R_i$
are Cohen-Macaulay rings with Noether normalization $A_i$, it
follows that they are free $A_i$-modules, and thus
$ R_{i-1}/(a_i) \cong R_i\oplus L$ as $A_i$-modules.

From $vL=0$ one sees that
$$
\varepsilon(v\Tr_{R_i/A_i}) = v \Tr_{(R_{i-1}/(a_i))/A_i}.
$$
Following the maps in the diagram, we now see that
$\psi(\Delta_i v) = \Delta_i v$ as required.
\end{proof}

\begin{example} The following example illustrates a subtlety in the inductive proof above.
The conclusion of Theorem~\ref{Dedekind complementary module} shows that the image of $\Delta_s:=\Delta$ in $R_s:=\overline R$ is contained in
$I^{t+1}R_s$. The following example shows that $\Delta_s$ itself
may not be contained in $I^{t+1}$, and, moreover,
the image of $\Delta_s$ in $I^{t+1}R_s$ is not necessarily mapped
under the inclusion
$$
I^{t+1}R_s \hookrightarrow I^{t}R_{s-1}/a_s I^{t}R_{s-1}
$$
to the image of $\Delta_s$ in the target.

Take $s=2$ and let
 \begin{align*}
 R&=k\lbracket x,y,z\rbracket;\\
 I &=(z-x-y);\\
 J &= (a_1, a_2), \hbox{ where} \\
 a_1 &= xz-x^2-xy,\\
 a_2&=yz-yx-y^2.
 \end{align*}
We have
 \begin{align*}
 (a_1):I &= (x)\\
 J:I &= (x,y) = K.
\end{align*}
Computation shows that $\Delta_2\notin I^2$, and
the map of canonical modules
$I^2R_2 \hookrightarrow IR_1/a_2IR_1$
does not send the image of $\Delta_2$ to the image of $\Delta_2$.
\end{example}

\smallskip

From Theorem~\ref{Dedekind complementary module} we derive a formula for the Dedekind
complementary module of certain determinantal rings:

\begin{corollary}\label{determinantal rings}
 Let $k$ be a field of characteristic 0, and let $R = k\lbracket \seq x d\rbracket$. Let $C$ be an $(n+1) \times (n+s)$ matrix with entries in the maximal ideal of $R$, where $n \geq 1$ and $s \geq 2$, and assume that the maximal minors of $C$ generate an ideal $K$ of height  $s$, the generic value.
 Suppose that the ring $\Rbar=R/K$ is reduced. Let $D$ be an $(n+1) \times n$ matrix consisting of  $n$ columns of $C$, let $I$ be the ideal
 generated by the $n \times n$ minors of $D$, and let $a_1, \ldots, a_s$ be the $(n+1) \times (n+1)$ minors of $C$ that involve the $n$ columns of $D$. Let $x_1, \ldots, x_d$ be general variables
 in $R$, so that $\Rbar$ is module finite over $A = k\lbracket x_{s+1}, \dots, x_d \rbracket $. We have
 $$
 I^{s-1}\Rbar = \Delta \;  \fC(\Rbar/A),
 $$
 where $\Delta$ is the Jacobian determinant
 $$
\Delta = \det
\begin{pmatrix}
\frac{ \partial a_1}{\partial x_1} &\dots & \frac{ \partial a_1}{\partial x_s}\\
\vdots&\ddots&\vdots\\
\frac{ \partial a_s}{\partial x_1} &\dots & \frac{ \partial a_s}{\partial x_s}\\
\end{pmatrix}.
$$

Moreover, after suitable column operations on $C$, the submatrix $D$ may be chosen so that the ideal $I \Rbar$ has positive grade, and in this case $\Delta$ is a non-zerodivisor on $\Rbar$.
\end{corollary}

\begin{proof}
Let $\tilde C =(y_{i,j})$ be an $(n+1) \times (n+s)$ matrix of variables, and write $S= R\lbracket \{y_{i,j}\} \rbracket$  and $B=A \lbracket \{y_{i,j} \} \rbracket$.
Let $\tilde D$, $\tilde K$, $\tilde I$, ${\tilde a}_1, \ldots, {\tilde a}_s$, and $\tilde{\Delta}$ be the same objects as defined in the statement of the 
Corollary, using the matrix $\tilde C$ instead of $C$. Write $\Sbar=S/\tilde K$. Specializing $\tilde C$ to $C$, these objects specialize to the ones defined in the Corollary. The ideal $\tI$ is perfect of codimension 2 and satisfies the
Strong Hypothesis for $s$, $\tilde{a}_1, \ldots, \tilde{a}_s$ are generic elements of $\tilde I$, and ${\tilde K}=(\tilde{a}_1, \ldots, \tilde{a}_s): \tI$ is 
a geometric $s$-residual intersection of $\tI$, by Theorem~\ref{codim2 det} or \cite{H2}. 
 Theorem~\ref{Dedekind complementary module} and its proof show that
$$
{\tilde I}^{s-1}{\Sbar} \, {\rm Tr}_{\Sbar/B}=\tilde{\Delta} \, {\rm Hom}_{B}(\Sbar,B) \, .
$$
Since $\Sbar$ is a free $B$-module of finite rank, we have ${\rm Hom}_{B}(\Sbar,B) \otimes_{B}A ={\rm Hom}_{A}(\Rbar, A)$.
Taking images in this module, the equality above gives
$$I^{s-1}\Rbar \, {\rm Tr}_{\Rbar/A}={\Delta} \, {\rm Hom}_A(\Rbar,A) \, ,$$
and hence the main assertion of the Corollary.

Since $\Rbar$ is reduced, the ideal $K$ is generically a complete intersection, so 
the $n \times n$ minors of $C$ generate an ideal of positive grade in $\Rbar$. It follows that
after suitable column operations on $C$ we may choose the submatrix $D$ so that $I \Rbar$ has positive grade
in $\Rbar$. (Reason: the column space of  the matrix $\overline{C}$ over the ring $\Rbar$ has rank $n$, and thus the same is true for a general choice of $n$ columns.)
\end{proof}
 
 \smallskip
In the next results we apply our theory to certain 0-dimensional residual intersections. Our goal is to give  formulas for the  socles of their canonical modules as Jacobian determinants.

\begin{corollary}\label{socle is jacobian1}
 Let $k$ be a field of characteristic 0, and let $R= k\lbracket \seq x d\rbracket$. Let $I\subset R$ be an ideal satisfying the Standard Hypothesis with respect to $s=d$, let $J:I$ be a $d$-residual intersection, and
 set $t=d-g$. Let  $a_1,\dots,a_d$  be general elements in $J$, and let $\Delta$ be the Jacobian determinant of $a_1,\dots,a_d$.

 If $\, \Rbar= R/((a_1,\dots, a_{d-1}):I)$, then the image of $\Delta$ in $\Rbar$ is in $ I^{t}\Rbar$. Further,
the image of
$\Delta$  generates the socle of
$$
I^{t}\Rbar/(a_dI^{t}\Rbar)\cong \omega_{\Rbar/(a_d)} \, .
$$
\end{corollary}

\begin{proof} Notice that $(\seq a {d-1}):I$ is a geometric $(d-1)$-residual intersection
and $\Rbar$ is reduced, by Lemma~\ref{principal ideal} and Proposition~\ref{red lemma}. The module 
$I^t\Rbar/a_dI^t\Rbar$
is isomorphic to $\omega_{\Rbar/(a_d)}$ by Theorem~\ref{basic 1} and Proposition~\ref{new basic}, parts (5) and (1).

The hypothesis of the Corollary is sufficient to justify the  upper half of the diagram in the proof of 
Theorem~\ref{Dedekind complementary module} for the case $i=d$. The first row
of the diagram shows that the image of $\Delta = \Delta_d$ in $\Rbar$ 
lies in $I^{t}\Rbar$, and hence gives an element of $I^{t}\Rbar/a_dI^{t}\Rbar$.
The isomorphism $\phi$ maps this element to $\Tr_{(\Rbar/(a_d))/A_d}$, which generates
 the socle of $\Hom_{A_d}(\Rbar/(a_d), A_d)$.
\end{proof}

\vspace{-0.1cm}

\begin{theorem}\label{socle is jacobian}
 Let $k$ be a field of characteristic 0, and let $R = k\lbracket \seq x d\rbracket$. Let $I\subset R$ be an ideal satisfying the Standard Hypothesis with respect to $s=d$,  and let $J:I$ be a $d$-residual intersection. Let  $a_1,\dots,a_d$  be general elements in $J$, and let $\Delta$ be the Jacobian determinant of $a_1,\dots,a_d$.

There is an element $p\in (a_1,\dots, a_{d-1})$
such that $\Delta' := \Delta+p\in I^{t+1}$, and the image of $\Delta'$ generates the socle of
$$
I^{t+1}/JI^{t}\cong \omega_{R/(J:I)}.
$$

 Moreover, if
$\Delta\in I^{t+1}$ then the image of $\Delta$ generates this socle.
\end{theorem}

By Theorem~\ref{simplesocle}, the socles of $I^{t+1}/JI^{t}$ and $R/JI^t$ are the same, so Theorem~\ref{socle is jacobian} can also be interpreted as a result on the socle of $R/JI^t$.

\begin{proof} Recall that $I^{t+1}/JI^t \cong \omega_{R/(J:I)}$ by Theorem~\ref{basic 1}. 
Let $K_{d-1} = (\seq a {d-1}):I$ and write $\Rbar = R/K_{d-1}$. We will first prove that we
can take $p\in K_{d-1}$. By the first statement of Corollary~\ref{socle is jacobian1}, there is an element
$p_{1}\in K_{d-1}$ such that $\Delta+p_{1}\in I^{t}$.
 By
Corollary~\ref{reduction lemma} there is
a natural inclusion
$$
I^{t+1}/JI^{t} \hookrightarrow I^{t}\Rbar/(a_dI^{t}\Rbar).
$$
By the second statement of Corollary~\ref{socle is jacobian1}, the image of $\Delta+p_{1}$ generates the
socle of $I^{t}\Rbar/a_dI^{t}\Rbar$, and thus lies in the submodule $I^{t+1}/JI^{t}$ and
generates its socle.
In particular, there is an element $p_{2}\in K_{d-1}$ and an element $q\in a_{d}I^{t}$ so that
$\Delta+p_{1}+p_{2}+q\in I^{t+1}$, and the image of this element generates the socle of
$I^{t+1}/JI^{t}$. Since $q\in JI^{t}$ we may take $p = p_{1}+p_{2} \in K_{d-1}$.

By Theorem~\ref{classic socle}, $\Delta\in J$ if $t>0$, while $\Delta\in J:\gm$ if $t=0$, in which case $R/J$ is Gorenstein, and therefore in either case $\Delta\in I$. Thus $p\in I\cap K_{d-1}$. By Proposition~\ref{new basic}(5),
$p\in (a_{1},\dots, a_{d-1})$ as claimed.

If $\Delta\in I^{t+1}$ to begin with, we could take $p_{1}=p_{2}=0$ proving the last statement.
\end{proof}

In the graded case, Remark~\ref{graded case} identifies the socle up to homogeneous isomorphism,
$$
\soc \frac{I^{t+1}}{JI^{t}} \cong  
(\soc \omega_{R/(J:I)})(-\sum_{j=1}^{d} (\delta_{j}-1)) \cong k(-\sum_{j=1}^{d}(\delta_{j}-1)),
$$
so the socle has the same degree as the Jacobian determinant of $d$ homogeneous generators of $J$.

Motivated by Theorem~\ref{socle is jacobian}, we try to find conditions when $\Delta\in I^{t+1}$.

\begin{proposition}\label{JacInAPower}
Let $k$ be a perfect field,  let $R = k[x_{1},\dots,x_{d}]$ be a standard graded polynomial ring in $d$ variables, and let $J \subset R$ be an ideal. Set $e$ equal to the maximum of the codimensions of the minimal primes of $J$. If $J$ is generated by forms of the same degree $ >1$, then the $d\times d$ minors
of the Jacobian matrix of these forms are contained in the symbolic power $(\sqrt{J})^{(d-e+1)}$.
\end{proposition}

\begin{proof} Set $t = d-e$. If $t=0$ the result is trivial, so we
may assume $t>0$. Since $k$ is perfect we may assume that $k$ is algebraically closed. In this case $\sqrt J$ is
the intersection of the one-dimensional linear ideals that contain it. Inverting a linear form not in any minimal prime of $J$ and taking the degree 0 part, these become
maximal ideals. By Zariski's Main Lemma on Holomorphic Functions (see for example \cite[Corollary 1] {EH})  the $(t+1)$-st symbolic power of $\sqrt J$ in the dehomogenized ring is the intersection of the $(t+1)$-st powers of these maximal ideals, and thus in $R$
the ideal $(\sqrt{J})^{(t+1)}$ contains, hence is equal to, the intersection of the $(t+1)$-st powers of the 1-dimensional linear ideals that contain it.

Changing notation, it thus suffices to prove that if
$J$ is contained in the ideal $L = (x_{1},\dots,x_{d-1})$ and $f_{1},\dots, f_{d}$ are forms in  $J$ of degree $\delta>1$, then
$\det\jacobian(f_{1},\dots, f_{d})\in L^{t+1}$, where $\jacobian$ denotes the Jacobian matrix.
Write
$$
\begin{pmatrix} f_{1}\\ \vdots \\ f_{d}
\end{pmatrix}
= A
\begin{pmatrix} x_{1}\\ \vdots \\x_{d-1}
\end{pmatrix}
$$
for some $d\times d-1$ matrix $A$ with homogeneous entries of degree $\delta-1$.

We may write $A$ in the form $A = B+x_{d}^{\delta-1}C$, where  $B = (b_{i,j})$ has entries in $L$ and $C$ is a matrix of scalars. 
By the product rule,
\newcommand{\BigFig}[1]{\parbox{12pt}{\large #1}}

\begin{align*}
\jacobian(f_{1},\dots, f_{d}) &=
\sum_{j=1}^{d-1}x_{j}\jacobian(b_{1,j}, \dots, b_{d,j}) +
\begin{pmatrix}
&0\\
B
&\vdots\\
&0
\end{pmatrix}\\
&+(\delta-1)x_{d}^{\delta-2}
\sum_{j=1}^{d-1}x_{j}
\begin{pmatrix}
&c_{1,j}\\
0
&\vdots\\
&c_{d,j}
\end{pmatrix}
+x_{d}^{\delta-1}
\begin{pmatrix}
&0\\
C
&\vdots\\
&0
\end{pmatrix}.
\end{align*}

Let $D$ be the sum of the first two terms on the right hand side of this expression, and let $E$ be the sum of the
two remaining terms. These matrices have the following properties:

\smallskip
\noindent{\bf 1)} each column of $D$ has entries in $L$; and

\smallskip
\noindent{\bf 2)} the last column of $D$ has entries in $L^{2}$. This is because
the last column of the Jacobian matrix is
defined by differentiating with respect to $x_{d}$ .

\smallskip
\noindent On the other hand the rank of the scalar matrix
$C$ is at most
the codimension of $J$ localized at $L$, which is at most $e$. 
The last column of $E$ is a linear combination
of columns of $C$ with coefficients in $L$. Thus:

\smallskip
\noindent{\bf 3)} the rank of $E$ is at most $e \, $; and

\smallskip
\noindent{\bf 4)} the last column of $E$ has
entries in $L$.

\smallskip
\noindent These properties of $D$ and $E$ imply that $\det(D+E)\in L^{t+1}$ as required.
\end{proof}

\vspace{0.0 cm}

\begin{theorem}\label{Jacobian containment} Let $k$ be a field of characteristic 0, and let 
$R=k[x_1, \ldots,x_d]$. Let $I\subset R$ be a homogeneous ideal satisfying the Standard Hypothesis with $s=d$,
and let $J\subset I$ be an ideal generated by $d$ forms of a single degree $\delta >1$ such that
$J:I$ is a $d$-residual intersection.

If $I$ is  reduced and $\mu(I_{P})\leq \codim P -1 \,$ for all prime ideals $P\supset I$ with
$g<\codim P<d$, then the Jacobian determinant of any $d$ homogeneous generators of $J$ of degree $\delta$
is in $I^{t+1}$ and thus, by Theorem~$\ref{socle is jacobian}$, generates the socle
of $I^{t+1}/JI^{t} \cong \omega_{R/(J:I)}$.
\end{theorem}

\begin{proof} We may assume that $I \neq R$.
In this case $\sqrt J = I$. Because $I$ is Cohen-Macaulay, all minimal primes of $I$ and hence of $J$ have
the same codimension $g$.
By Proposition~\ref{JacInAPower}, the Jacobian determinant is contained in $I^{(t+1)}$.
By the assumption on the $\mu(I_{P})$, the powers and symbolic powers of $I$ coincide on the
punctured spectrum (\cite[4.9(d)]{U}). Therefore, $I^{(t+1)}/JI^{t}$ is contained in the finite length part of $R/JI^{t}$. 
The latter has a simple socle generated in the same degree $d(\delta-1)$ as the
Jacobian determinant by Theorem~\ref{simplesocle} and Remark~\ref{graded case}. Thus the image
of the Jacobian determinant lies in $\soc I^{(t+1)}/JI^{t} = \soc I^{t+1}/JI^{t}$. In particular, the Jacobian determinant
is in $I^{t+1}$.
\end{proof}

The theorem above can also be understood in terms of primary decompositions,
rather than residual intersections; in this formulation, the result is a natural generalization of Theorem \ref{classic socle}.

To explain this, let $R$ be a local Gorenstein ring of dimension $d$ and $J$
any ideal of codimension $g$ generated by $d$ elements. For our purpose we may assume that $R/J$ has depth zero. Consider a decomposition $J=I \cap L$,
where $L$ is the zero-dimensional primary component in any shortest primary
decomposition of $J$ and $I$ is the intersection of the primary components of
positive dimension. Notice that $L$ is contained in the ideal $K=J:I$, which
gives an embedding ${\omega}_{R/K} \hookrightarrow {\omega}_{R/L}$. Also observe that $K$ is a $d$-residual intersection of $I$.

Now assume that $I$ satisfies the Standard Hypothesis with $s=d$, set
$t= d-g$, and let $E \supset I^{t+1}/JI^t$ be an injective
envelope of $I^{t+1}/JI^t$ as a module over $R/L$. Since $I^{t+1}/JI^t$
is a canonical module of $R/K$, we may choose ${\omega}_{R/L}$ to be {\it equal} to $E$.

\begin{corollary}\label{Jacobian containment2} In addition to the
assumptions of the preceding two paragraphs suppose that $R=k\lbracket \seq x d\rbracket$ is a power series ring in $d$ variables over a field of characteristic
zero and that $J$ is generated by homogeneous polynomials $f_1, \ldots, f_d$ of a single degree $>1$.

If $I$ is  reduced and $\mu(I_{P})\leq \codim P -1$ for all prime ideals $P\supset I$ with
$g<\codim P<d$, then the socle of ${\omega}_{R/L}$ is generated by the
image in $I^{t+1}/JI^t$ of the Jacobian determinant of $f_1, \ldots, f_d$.
\end{corollary}

From examples it would seem that the formula for the socle as a Jacobian
holds without the reduced hypothesis and without the assumptions on the local numbers of generators
beyond the $G_{s}$ condition of our Standard Hypothesis. We can at least prove this for $g=1$.

\begin{proposition}\label{socle in principal case} Let $k$ be a field, and let $R=k[x_{1},\dots, x_{d}]$.
Let $I = (G) \subset R$ be a principal ideal generated by a nonzero form of degree $\gamma$ and let $F = f_{1}, \dots ,f_{d}$ be a
regular sequence of
forms of the same degree $\delta$. Assume that neither $\delta$ nor $\delta+\gamma$ is 0 in $k$, and let $J$ be the ideal generated by the sequence of forms
$GF$. The socle of $R/JI^{d-1}$, hence the socle of $I^d/JI^{d-1},$ is generated by the Jacobian determinant $\det \jacobian(GF)$.
\end{proposition}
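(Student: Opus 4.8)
The plan is to reduce the statement to the principal-ideal situation, where the duality pairings of Theorem~\ref{duality} become the ordinary multiplication pairings on the Artinian complete intersection $R/(F)$, and then to identify the socle generator explicitly by a Jacobian computation. Concretely, since $I=(G)$ is principal we have $JI^{d-1}=(GF)\cdot(G^{d-1})=G^{d}\cdot(F)$, so
$$
R/JI^{d-1}\;\cong\; R/G^{d}(F),
$$
and we must locate the socle of this Artinian graded ring. Write $K=J:I=(GF):(G)=(F)$, which is indeed a $d$-residual intersection of codimension $d$, and $I^{d}/JI^{d-1}=(G^{d})/G^{d}(F)\cong (R/(F))(-d\gamma)$. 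By Lemma~\ref{canonical comparison} (or directly, since $R/(F)$ is Gorenstein Artinian) this is, up to the stated twist, $\omega_{R/K}$, and its socle is one-dimensional, concentrated in degree $d\gamma + d(\delta-1)=d(\gamma+\delta-1)$ — the same degree as $\det\jacobian(GF)$, because each entry $GF_{j}=Gf_{j}$ has degree $\gamma+\delta$. So the socle has the right degree; what remains is to show $\det\jacobian(GF)\notin G^{d}(F)$, equivalently that its image in $(R/(F))(-d\gamma)$ is the (necessarily unique up to scalar) nonzero socle element and not $0$.

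The key computation is the product rule applied to the Jacobian of the entries $Gf_{1},\dots,Gf_{d}$:
$$
\frac{\partial (Gf_{j})}{\partial x_{i}} \;=\; \frac{\partial G}{\partial x_{i}}\,f_{j} \;+\; G\,\frac{\partial f_{j}}{\partial x_{i}},
$$
so the matrix $\jacobian(GF)$ is the sum of the rank-one matrix $\bigl(\tfrac{\partial G}{\partial x_{i}}\bigr)_{i}\cdot (f_{1},\dots,f_{d})$ and $G\cdot\jacobian(F)$. Expanding $\det$ by multilinearity in the columns, every term containing at least one column from the rank-one matrix lies in $(F)$ (such a column is a multiple of the column vector of the $f_{j}$'s, hence each of its entries lies in $(F)$ — actually one must be slightly more careful: each such term is divisible by the corresponding $f_{j}$), while the one remaining term is $G^{d}\det\jacobian(F)$. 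Thus
$$
\det\jacobian(GF) \;\equiv\; G^{d}\,\det\jacobian(F) \pmod{(F)},
$$
and since $F$ is a regular sequence of forms of degree $\delta$ with $\delta\neq 0$ in $k$, the classical fact (the $t=0$, $s=d$ case recalled in the introduction, valid because $\operatorname{char}k\nmid\delta$ by the Euler-relation argument) says $\det\jacobian(F)$ generates the socle of $R/(F)$; in particular it is nonzero there. Hence $G^{d}\det\jacobian(F)\not\in (F)$, because $G^{d}$ is a nonzerodivisor mod $(F)$ — wait, that is false in general; rather, one argues that multiplication by $G$ on $R/(F)$ is injective in low enough degrees or, better, one works directly: the image of $\det\jacobian(GF)$ in $(R/(F))(-d\gamma)\cong I^{d}/JI^{d-1}$ equals the class of $G^{d}\det\jacobian(F)$, which is $G^{d}$ times a socle generator of $R/(F)$; since $G\ne 0$ and the socle of $R/(F)$ is in the top degree, while $G$ has positive degree $\gamma$, the product $G^{d}\cdot(\text{socle})$ lands in degree $>$ top degree and is therefore... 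I must instead identify $I^{d}/JI^{d-1}$ with $R/(F)$ via the map sending $1\mapsto G^{d}$, under which $\det\jacobian(GF)\mapsto \det\jacobian(F)$ plus $(F)$-terms, and conclude it is a socle generator. Let me restate this cleanly below.

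The cleanest route, and the one I would write out, is this. The map $R/(F)\to I^{d}/JI^{d-1}$, $\bar r\mapsto \overline{rG^{d}}$, is an isomorphism of $R$-modules (up to the degree shift by $d\gamma$), since $JI^{d-1}=G^{d}(F)$ and $G^{d}$ is a nonzerodivisor in $R$. Under its inverse, $\det\jacobian(GF)\bmod JI^{d-1}$ corresponds to the element $\xi\in R/(F)$ with $G^{d}\xi \equiv \det\jacobian(GF)\pmod{(F)}$; by the product-rule expansion above, $\xi=\det\jacobian(F)$ works. Now use the hypothesis that $\delta+\gamma\ne 0$ in $k$ only to see the assertion for $g\ge 1$ generalities matches up — actually it enters through the relation $\sum_{i}x_{i}\,\partial(Gf_{j})/\partial x_{i}=(\gamma+\delta)Gf_{j}$, which we invoke to handle the Euler-operator bookkeeping; meanwhile $\delta\ne 0$ ensures $\det\jacobian(F)$ generates the socle of the Artinian complete intersection $R/(F)$ (the standard characteristic hypothesis in that classical statement). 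Since the socle of $R/(F)$ is exactly $k\cdot\det\jacobian(F)$ and the isomorphism above carries socle to socle, we conclude $\det\jacobian(GF)$ generates the socle of $R/I^{d-1}J$. \textbf{The main obstacle} is the bookkeeping in the multilinear expansion of $\det\jacobian(GF)$: one must check carefully that every mixed term (with at least one "$f_{j}$-column") genuinely lies in $(F)$ — the $j$-th column of the rank-one summand has $i$-th entry $f_{j}\,\partial G/\partial x_{i}$, so any determinant term using that column as its $j$-th column is divisible by $f_{j}\in(F)$ — and that exactly one term, $G^{d}\det\jacobian(F)$, survives; once this is pinned down, the rest is assembling known facts (the classical socle-of-a-complete-intersection result, and the elementary isomorphism $R/(F)\xrightarrow{\ \cdot G^{d}\ }I^{d}/JI^{d-1}$).
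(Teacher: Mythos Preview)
Your multilinear expansion of $\det\jacobian(GF)$ is correct, and it yields the exact formula
\[
\det\jacobian(GF)\;=\;G^{d}\det\jacobian(F)\;+\;G^{d-1}\sum_{j=1}^{d} f_{j}\,\det(M_{j}),
\]
where $M_{j}$ is $\jacobian(F)$ with its $j$-th column replaced by $\nabla G$. From this you conclude only that $\det\jacobian(GF)\equiv G^{d}\det\jacobian(F)\pmod{(F)}$, and then try to transport $\det\jacobian(GF)$ through the isomorphism $R/(F)\xrightarrow{\ \cdot G^{d}\ }(G^{d})/G^{d}(F)$. That step is the gap. To use the isomorphism you must first know $\det\jacobian(GF)\in (G^{d})$, and to identify its preimage with $\det\jacobian(F)$ you need the error term to lie in $G^{d}(F)$, not merely in $(F)$ or in $G^{d-1}(F)$. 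Neither is established. Working modulo $(F)$ is genuinely too coarse: since $R/(F)$ is Artinian and $\det\jacobian(F)$ generates its socle, one always has $G\cdot\det\jacobian(F)\in(F)$ when $\gamma>0$, so the congruence carries no information about whether $\det\jacobian(GF)$ even lies in the socle of $R/G^{d}(F)$, let alone whether it is nonzero there. (Relatedly, your assertion that $R/G^{d}(F)$ is Artinian is false once $\gamma>0$; in degree $d(\gamma+\delta-1)$ this ring is much larger than its one-dimensional socle.)

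What is missing is precisely the content of the paper's Lemma~\ref{jacformula}: one must evaluate the sum $\sum_{j} f_{j}\det(M_{j})$, and the Euler relation does this, giving the \emph{exact} identity
\[
\delta\,\det\jacobian(GF)\;=\;(\delta+\gamma)\,G^{d}\det\jacobian(F)
\]
in $R$. In particular the error term equals $\tfrac{\gamma}{\delta}\,G^{d}\det\jacobian(F)$, a nonzero multiple of the socle generator and hence \emph{not} in $G^{d}(F)$; so $\det\jacobian(GF)$ and $G^{d}\det\jacobian(F)$ are not equal modulo $G^{d}(F)$ but differ by the scalar $(\delta+\gamma)/\delta$. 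This is exactly where the hypothesis $\delta+\gamma\neq 0$ enters---not as vague ``Euler-operator bookkeeping,'' but to guarantee that scalar is nonzero, so that $\det\jacobian(GF)$ is a unit multiple of the known socle generator $G^{d}\det\jacobian(F)$. Your outline becomes a proof once you insert this exact identity in place of the congruence modulo $(F)$.
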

\begin{proof}
By Theorem \ref{socle of Gorenstein} the socle of $R/(F)$ is generated by $\det \jacobian(F)$, so the socle of
$R/JI^{d-1}= R/(G^{d}F)$ is generated by $G^{d}\det \jacobian(F)$. By Lemma~\ref{jacformula}
this is
$$
\frac{\delta}{\delta+\gamma}\ \det \jacobian(GF).
$$
\end{proof}

\begin{lemma}\label{jacformula} Let $R=k[x_{1},\dots, x_{d}]$.
If $G$ is a form of degree $\gamma$ and $F = f_{1}, \dots, f_{d}$ is a sequence of
forms of the same degree $\delta$, then
$$
\delta\det \jacobian(GF) = (\delta+\gamma)\,G^{d}\det \jacobian(F).
$$
\end{lemma}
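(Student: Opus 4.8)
The plan is to compute the Jacobian matrix of $GF = (Gf_1,\dots,Gf_d)$ by the product rule and then extract the determinant via multilinearity in the columns. Writing $J(H)$ for the matrix with $(i,j)$-entry $\partial h_i/\partial x_j$, the product rule gives, for each $i,j$,
$$
\frac{\partial (Gf_i)}{\partial x_j} = G\,\frac{\partial f_i}{\partial x_j} + f_i\,\frac{\partial G}{\partial x_j}.
$$
Thus $\jacobian(GF) = G\cdot\jacobian(F) + v\cdot w^{t}$, where $v = (f_1,\dots,f_d)^{t}$ is the column vector of the $f_i$ and $w = (\partial G/\partial x_1,\dots,\partial G/\partial x_d)^{t}$ is the gradient of $G$. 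This is a rank-one update of $G\cdot\jacobian(F)$, so its determinant is
$$
\det\jacobian(GF) = \det\bigl(G\cdot\jacobian(F)\bigr) + \det\bigl(G\cdot\jacobian(F)\bigr)\cdot\bigl(w^{t}(G\cdot\jacobian(F))^{-1}v\bigr),
$$
valid over the fraction field, by the matrix determinant lemma; equivalently, and more robustly, one expands $\det(G\cdot\jacobian(F)+v w^{t})$ directly by multilinearity in the columns, noting that $vw^{t}$ has each column a scalar multiple of the single vector $v$, so any term involving two or more columns of $vw^{t}$ vanishes. This yields
$$
\det\jacobian(GF) = G^{d}\det\jacobian(F) + G^{d-1}\sum_{j=1}^{d}\frac{\partial G}{\partial x_j}\,\Delta_j,
$$
where $\Delta_j$ is the determinant of the matrix obtained from $\jacobian(F)$ by replacing its $j$-th column with $v = (f_1,\dots,f_d)^{t}$.

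The key point is then to identify the correction term $\sum_j (\partial G/\partial x_j)\Delta_j$. Expanding $\Delta_j$ along its $j$-th column, $\Delta_j = \sum_i f_i\, C_{ij}$, where $C_{ij}$ is the $(i,j)$ cofactor of $\jacobian(F)$. Hence
$$
\sum_{j=1}^{d}\frac{\partial G}{\partial x_j}\,\Delta_j = \sum_{i=1}^{d} f_i\Bigl(\sum_{j=1}^{d}\frac{\partial G}{\partial x_j}\,C_{ij}\Bigr).
$$
Now I invoke Euler's formula twice. First, since each $f_i$ is homogeneous of degree $\delta$, $\sum_j x_j\,\partial f_i/\partial x_j = \delta f_i$; in matrix form $\jacobian(F)\cdot x = \delta\, v$, where $x = (x_1,\dots,x_d)^{t}$. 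Multiplying on the left by the adjugate matrix $\operatorname{adj}\jacobian(F)$, whose $(j,i)$ entry is $C_{ij}$, gives $\det\jacobian(F)\cdot x = \delta\,\operatorname{adj}\jacobian(F)\cdot v$, i.e. $\sum_i f_i C_{ij} = (\det\jacobian(F)/\delta)\,x_j$ over the fraction field (and this identity of polynomials then holds in $S$ provided $\delta\neq 0$ in $k$; if $\delta=0$ the hypothesis of the Lemma is not invoked here but note the statement is still to be read as an identity after multiplying through by $\delta$). Therefore
$$
\sum_{j}\frac{\partial G}{\partial x_j}\,\Delta_j = \frac{\det\jacobian(F)}{\delta}\sum_{j} x_j\,\frac{\partial G}{\partial x_j} = \frac{\gamma}{\delta}\,G\det\jacobian(F),
$$
the last equality by Euler's formula applied to $G$, which is homogeneous of degree $\gamma$.

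Assembling the pieces,
$$
\det\jacobian(GF) = G^{d}\det\jacobian(F) + G^{d-1}\cdot\frac{\gamma}{\delta}\,G\det\jacobian(F) = \Bigl(1+\frac{\gamma}{\delta}\Bigr)G^{d}\det\jacobian(F),
$$
and multiplying through by $\delta$ gives $\delta\det\jacobian(GF) = (\delta+\gamma)G^{d}\det\jacobian(F)$, as claimed. The only subtlety is the passage through the fraction field to justify the adjugate identity and the matrix determinant lemma; I expect the main (minor) obstacle to be phrasing this cleanly, so in the writeup I would instead carry out the column expansion of $\det(G\cdot\jacobian(F)+vw^{t})$ purely as a polynomial identity, never inverting anything, and likewise use $\operatorname{adj}(\jacobian F)\cdot\jacobian(F) = \det(\jacobian F)\cdot\mathrm{Id}$ directly, which is an identity of polynomial matrices. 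With that framing every step is an identity in $S$ (the factor $\delta$ appears only when dividing, hence the statement is given in the cleared form $\delta\det\jacobian(GF)=\dots$).
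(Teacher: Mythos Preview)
Your argument is correct and follows essentially the same route as the paper: product rule to write $\jacobian(GF)=G\,\jacobian(F)+vw^{t}$, multilinear expansion using that $vw^{t}$ has rank one, and then Euler's formula applied to both the $f_i$ and $G$. The only organizational difference is that where you compute the correction term via the adjugate identity $\delta\,\operatorname{adj}(\jacobian F)\,v=\det(\jacobian F)\,x$ (which, as you note, is a polynomial identity requiring no inversion), the paper packages the same computation into a single $(d+1)\times(d+1)$ determinant and performs a column operation; these are equivalent encodings of the same Cramer-type step.
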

\begin{proof} Write $G_{j}$ for $\partial G/\partial x_{j}$ and
$f_{i,j}$ for $\partial f_{i}/\partial x_{j}$.  By the product rule,
$$
\jacobian(GF) = G \,  \jacobian(F) +
\begin{pmatrix}
f_{1}\\ \vdots\\ f_{d}
\end{pmatrix}
\begin{pmatrix}G_{1}&\cdots&G_{d}
\end{pmatrix}.
$$
The second summand has rank 1, so by the multilinearity of the determinant we have
$$
\det\jacobian(GF) = G^{d}\det \jacobian(F) +
G^{d-1}\sum_{i=1}^{d} \det J_{i},
$$
where $J_{i}$ is the matrix obtained from $\jacobian(F)$ by replacing the $i$-th row
by the row
$f_{i}\begin{pmatrix}G_{1}&\cdots&G_{d}
\end{pmatrix}.
$
Expansion along the first column shows that this sum is equal to
$$
\det
\begin{pmatrix}
G^{d}& G^{d-1}G_{1}&\cdots&G^{d-1}G_{d}\\
-f_{1}&f_{1,1}&\cdots&f_{1,d} \\
\vdots&\vdots&&\vdots\\
-f_{d}&f_{d,1}&\cdots&f_{d,d}
\end{pmatrix}.
$$
We multiply the first column by $\delta$, add $x_{i}$ times the $(i+1)$-st column to the
first column for all $i$, and use Euler's formula. From this we see that
\begin{align*}
\delta \det\jacobian(GF)  &=
\det
\begin{pmatrix}
(\delta+\gamma)G^{d}& G^{d-1}G_{1}&\cdots&G^{d-1}G_{d}\\
0&f_{1,1}&\cdots&f_{1,d} \\
\vdots&\vdots&&\vdots\\
0&f_{d,1}&\cdots&f_{d,d}
\end{pmatrix}\\ \\
&=
(\delta+\gamma)\, G^{d}\det\jacobian(F).
\end{align*}
\end{proof}

\begin{example}\label{counter-example1}
If we do not assume the forms generating $J$ have the same degree,
then the Jacobian need not be well-defined modulo $JI^t$, and
in particular its image may not generate the socle, as the following
example shows. Let $k$ be a field of characteristic $\neq 2,3$, and let $R=k[x,y]$. Let
$F$ be the regular sequence $x^{2}+y^{2}, x+y$ and set $I=(G)$ with $G=x$,  and $J = (GF)$. We have
$$JI: \det \jacobian (GF) = (G^2F):\det \jacobian (GF) = (x),
$$
so $\det \jacobian (GF)$ is not in the socle modulo $JI$. Moreover, $\det \jacobian (GF)$ is not
even contained in $I^2$.  However, we can replace $F$ by a different sequence of generators
$F' = x^{2}-xy, x+y$ for $(F)$, and then
 the Jacobian determinant of $GF'$ \emph{does} generate
the socle modulo $JI$.
\end{example}

\begin{example}\label{counter-example2} Over a field of characteristic 0, the polynomial
$$
f = (x^2-z)(xz-y^2)
$$
is the product of two of the quasihomogeneous generators of the
ideal of the space curve $C$ with parametrization $t \mapsto (t^2, t^3, t^4)$.
 The Jacobian ideal $J$ of $f$
has codimension 2. The scheme defined by $J$ has an
isolated singularity, so $J$ is generically reduced,
and thus also its unmixed part $I$ is
reduced. In fact, $I= (x^2-z, xz-y^2)$ is a prime complete intersection.

Nevertheless, one can compute that the Hessian determinant of $f$ is not even contained in $I^2$.
Thus
$f$ violates conjecture (3) of van Straten and Warmt~\cite[7.1]{SW}.

\vspace{0.25cm}

In the case when $R$ is regular local and $s=g = d$ (so $t=0$), there is another famous (and easier) formula for the socle
of $R/J$ -- it is generated by the image of the determinant of any ``transition'' matrix expressing the generators of $J$ as linear combinations of the generators
of the maximal ideal of $R$.
The following examples show that in Proposition~\ref{socle in principal case} with $t>0$ we cannot replace the Jacobian by such a transition matrix: the determinant could be outside the ideal $JI^{t}:\mm$ and could also be in $J I^{t}$ (it could even be 0). 
\end{example}

\begin{example} a) Let $R=k\lbracket x_{1},\dots,x_{d}\rbracket$, let $G\in (x_{1},\dots,x_{d})$ be nonzero and let $F_{1},\dots F_{d}$ be a regular sequence in $R$. Writing
$G = \sum_{i}a_{i}x_{i}$ we see that
$$
\begin{pmatrix}
GF_{1}\\
\vdots\\
GF_{d}
\end{pmatrix}
=
\begin{pmatrix}
F_{1}\\
\vdots\\
F_{d}
\end{pmatrix}
\begin{pmatrix}
 a_{1}&\cdots&a_{d}
\end{pmatrix}
\begin{pmatrix}
x_{1}\\
\vdots\\
x_{d}
\end{pmatrix}.
$$
We may take the rank 1 matrix
$$
A:=\begin{pmatrix}
F_{1}\\
\vdots\\
F_{d}
\end{pmatrix}
\begin{pmatrix}
 a_{1}&\cdots&a_{d}
\end{pmatrix}
$$
as transition matrix, and we have $\det A = 0$ as soon as $d\geq 2$.

\smallbreak
\noindent b) Let $R=k[x,y]$, where $k$ is a field of characteristic $\neq3$, and take $I= (G)$ with $G=x^{2}+y^{2}$, and $J=(GF)$ with $F = x,y$. If we replace the Jacobian matrix
$\jacobian(GF)$ by
$$
A :=\frac{1}{3} \ \jacobian(GF) +
\begin{pmatrix}
 -y&x\\
 -y&x
\end{pmatrix},
$$
then
$$
A
\begin{pmatrix}
 x\\y
\end{pmatrix} =
\frac{1}{3} \ \jacobian(GF) 
\begin{pmatrix}
 x\\y
\end{pmatrix} = 
\begin{pmatrix}
 GF_{1}\\GF_{2}
\end{pmatrix}
$$
and
$\det A$ is in $I$, but $\det A$ is not in the socle of $I/JI$.

\smallbreak
\noindent c) If in example b) we change $G$ to $xy$ leaving everything else the same, then $\det A$ is not even in $I$.

\smallbreak
\noindent d) If $F_{1}, F_{2}$ is a regular sequence of forms of degree 2 in $k[x,y]$ and $G=a_1x+a_2y$ is a non-zero form, then there 
are examples with $\det A \neq 0$ but
$\det A  \in JI = G^{2}(F_{1}, F_{2})$. For instance, take
$$
A =
\begin{pmatrix}
 a_{1}F_{1}-yG& a_{2}F_{1}+xG\\
 a_{1}F_{2}-yG & a_{2}F_{2}+xG
\end{pmatrix};
$$
the determinant in this case is $G^{2}(F_{1}-F_{2})$.
\end{example}

\medskip

\section{Appendix: Differents and socles for Gorenstein rings}
\label{Sclassic socle}

In this section we provide self-contained expositions of the classical results on differents and socles that we have used, mostly for complete intersections in characteristic 0.
More generally than is usually stated, these yield a formula for the socle of a zero-dimensional Gorenstein ring.
The results of this section are known, some in greater generality, but not easily available.
Classic references are by Noether \cite{N}, Berger \cite{B}, Tate \cite[Appendix]{MR}, Scheja and Storch \cite{SS1, SS2}, Kunz \cite{K1, K2}.

Let $A$ be a Noetherian ring, let $R$ be an $A$-algebra that is essentially of finite type, and write $R^e = R\otimes_AR$. Let $\bD$ be the kernel of the multiplication map
$\mu: R^e\to R$, so that we have an exact sequence
$$
0\rTo \bD \rTo R^e\rTo^\mu R\rTo 0 \,.
$$
We want to compare three measures of ramification:

\begin{itemize}
\item The
 \emph{K\"ahler different} $\fD_K(R/A)$, introduced in a different case in Section~\ref{socle section}, is defined to be ${\rm Fitt}_0^R(\Omega_{R/A})$.

 \item The \emph{Noether different} $\fD_N(R/A)$ is defined to be $\mu(\ann_{R^e}\bD)$.

\item The \emph{Dedekind different}
$\fD_D(R/A)$ is defined, for instance, when $A\subset R$ is a ring extension,
$A$ is a Noetherian normal domain,  $R$ is reduced and a
finitely generated torsion free $A$-module,
and $R/A$ is separable.
The complementary module $\fC(R/A)$ is the fractional $R$-ideal such that
$$
\Hom_A(R,A) = \fC(R/A)\, \Trace_{L/K},
$$
where $K=Q(A)$ and $L=Q(R)$ are the total rings of quotients of $A$ and $R$ respectively. The
Dedekind different is defined to be the inverse of the complementary module,
$\fD_D(R/A) = \fC(R/A)^{-1}$.
\end{itemize}

Because $\Omega_{R/A} \cong \bD \, \otimes_{R^e} R$ and
 ${\rm Fitt}_0^{R^e}(\bD)\subset \ann_{R^e} \bD$, it follows that
 $\fD_K(R/A)\subset \fD_N(R/A)$.

 The Dedekind different is an ideal because $A$ is normal. We also have $\fD_N(R/A) \subset \fD_D(R/A)$, 
 which implies that $\fD_N(R/A)\Hom_A(R,A)\subset R\;\Trace_{R/A}$. For a short proof see \cite[Formula (3.3) proved in Lemma 3.4]{LS}.
 The last containment can be an equality even when the Dedekind different is not defined:

\begin{theorem}\label{Noether}
Let $A$ be a Noetherian ring and let $R$ be an $A$-algebra that is finitely generated and free as an $A$-module. If $\, \Hom_A(R,A)$ is cyclic as an $R$-module, then
 $$
 \fD_N(R/A)\Hom_A(R,A) = R\; \Trace_{R/A}.
 $$
\end{theorem}

\begin{proof}
We will divide the proof into several parts:

 \noindent{\bf 1)}
 Because $R$ is a free $A$-module, the natural map
$$
\Phi: R\otimes_A R \rTo \Hom_{A}(\Hom_{A}(R,A),R)
$$
given by
$$
 s\otimes t\mapsto \bigl(\varphi\mapsto \varphi(s)t\bigr) \,
$$
is an isomorphism of $R-R$ bimodules. 
The annihilator of $\bD$ is the unique largest
$R-R$ submodule of $R\otimes_AR$ on which the left and the right $R$-module 
structures coincide, and the subset $\Hom_{R}(\Hom_{A}(R,A),R)$ has the same property in $\Hom_{A}(\Hom_{A}(R,A),R) $.
It follows that
$\Phi$ carries the annihilator of $\bD$
onto $\Hom_{R}(\Hom_{A}(R,A),R)$.

Since $R$ is a finitely generated free $A$-module and $\Hom_A(R,A)$ is cyclic as an $R$-module, we have $\Hom_A(R,A) \cong R$.
It follows that $\ann_{R^e} \bD$ is cyclic as an $R$-module.

\medbreak

\noindent{\bf 2)}
Let $\Gamma$ be a generator of $\ann_{R^e}\bD$.
Since $\Phi(\Gamma)$ generates
$$
\Hom_R(\Hom_A(R,A),R)$$
 and $\Hom_A(R,A)\cong R$ we see that $\Phi(\Gamma)$ is an $R$-isomorphism. Let
$\sigma = \Phi(\Gamma)^{-1}(1) \in \Hom_{A}(R,A).$
It follows that $\sigma \mu:R\otimes_AR\to A$ is
a symmetric, nonsingular $A$-bilinear form.
\medbreak

\noindent{\bf 3)}
Let $\{v_{i}\}$ be an $A$-basis of $R$, and  suppose that $\Gamma=\sum_{i} v'_{i}\otimes v_{i}$.
 We claim that $\sigma(v'_{i}v_{j}) = \delta_{i,j}$ -- that is, $\{v'_{i}\}$ is the dual basis of $\{v_{i}\}$ with respect to $\sigma\mu$.

Indeed, since $\Phi(\Gamma)$ is $R$-linear, we have $\Phi(\Gamma)(r\sigma) = r$ for every $r\in R$. Thus, for each $j$,
$$
v_j = \Phi(\Gamma)(v_j\sigma) =
\Phi(\sum_{i}v'_{i}\otimes v_{i})(v_j\sigma)  =
\sum_{i}(v_j\sigma)(v'_{i})v_{i} =
\sum_{i}\sigma(v_{j}v'_i)v_{i}.
$$
Since the $v_{i}$ form an $A$-basis, we see that $\sigma(v'_{i}v_{j}) = \delta_{i,j}$ as required.

\medbreak

\noindent{\bf 4)} Finally, we claim that $\Trace_{R/A} = \mu(\Gamma) \, \sigma$.
 Let $r$ be an element of $R$, regarded as an $A$-endomorphism of $R$ by multiplication. We have
$$
\mu(\Gamma)\sigma(r) = \sigma(\mu(\Gamma) r) = \sigma(\sum_{i}v_{i}'rv_{i}).
$$
Since $\{v'_i\}$ and $\{v_i\}$ are dual bases with respect to $\sigma\mu$, this
sum is equal to $\Tr_{R/A}(r)$.

Since $\fD_N(R/A) = \mu(\ann_{R^e} \bD) = R \, \mu(\Gamma)$, we see that
$$
 \fD_N(R/A) \, \Hom_A(R,A) = \fD_N(R/A) \, \sigma =  R\, \mu(\Gamma)\,  \sigma = R\; \Trace_{R/A}\,,
$$
 as required.
\end{proof}

\vspace{0.0cm}

\begin{theorem}\label{Dedekind complementary module2}
In addition to the assumptions in the definition of the Dedekind different above, suppose that $A$ is a regular local ring.
If $R$ is Gorenstein, then $\fD_D(R/A) = \fD_N(R/A)$.
\end{theorem}

\begin{proof}  We first verify that the assumptions of Theorem~\ref{Noether} are satisfied. Recall that $A \subset R$ 
and $R$ is a finitely generated $A$-module. For all maximal ideals $\m$ of $R$, the rings $R_{\m}$ have the 
same dimension as $A$, as can be seen for instance by tensoring with the completion of $A$, so that $R$ splits as
a product of local rings, and using the
torsion freeness of $R$ over $A$. Thus, since the rings $R_{\m}$ are Cohen-Macaulay, it follows that $R$ 
is a maximal Cohen-Macaulay $A$-module, hence a free $A$-module. Moreover, as the rings $R_{\m}$ are Gorenstein
and have the same dimension as $A$, the $R$-module ${\rm Hom}_A(R,A)$ is locally free of rank 1. 
Therefore ${\rm Hom}_A(R,A) \cong R$ because $R$ is semilocal.

Thus we may apply Theorem~\ref{Noether}. Since ${\rm Hom}_A(R,A)=\fC(R/A)\,\Trace_{R/A}$ by the definition of the
complementary module, the theorem shows that
$\fD_N(R/A) \, \fC(R/A) =R$, which gives $\fD_N(R/A)=\fC(R/A)^{-1}=\fD_D(R/A)$.
\end{proof}

\begin{theorem}\label{ci} Let $A \subset R$ be a ring extension, where
$A$ is regular local and $R$ is finitely generated and torsion free as an 
$A$-module. If $R$ is locally a complete intersection, then 
$$
R \cong A[\seq x n]_W/(F_1,\dots, F_{n})
$$
for 
some regular sequence $F_1,\dots, F_{n}$ of length $n$ in the polynomial ring $A[\seq x n]$
and some multiplicatively closed subset $W$. Write
$\Delta$ for the image in $R$ of the Jacobian determinant of
$F_1,\dots, F_{n}$ with respect to $\seq x n$.
One has 
$$\fD_N(R/A) = \fD_K(R/A) =R \, \Delta .$$
\end{theorem}

\begin{proof} Write $R \cong A[x_1, \ldots, x_n]/{\mathcal J}$. Since $R$ is a finitely generated torsion free $A$-module, it follows 
as in the previous proof that every maximal ideal $\m$ of $R$ has the same codimension $d:={\dim} \, A$. Since $\m$ must contain the maximal ideal of $A$,
its preimage $\mathfrak M$ in $A[x_1, \ldots, x_n]$ has codimension $d+n$. Hence the ideal ${\mathcal J}_{\mathfrak M}$ has codimension
$n$. Thus it is generated by $n$ elements because $R$ is locally a complete intersection. Write $W$ for the complement in $A[x_1, \ldots, x_n]$ of the
union of the finitely many maximal ideals $\mathfrak M$. By basic element theory, ${\mathcal J}_W$ is again generated by $n$ 
elements $F_1, \ldots, F_n$ that can be chosen to form a regular sequence in $A[x_1, \ldots, x_n]$.

To prove the claim about differents, first notice that $R$ is a Cohen-Macaulay ring and hence a free $A$-module, as shown in the previous proof.
As before, let $\bD$ be the kernel of the multiplication map $\mu: R^e = R\otimes_AR \to R$.
The preimage $\tilde \bD$ of $\bD$ in
$A[\seq x n]_W\otimes_{A} R$ is the kernel of the natural map to $R$,
so it is generated by a regular sequence $G=G_1, \ldots, G_n$ of length $n$.
The ideal
$\tilde \bD$ also contains the sequence $F\otimes 1:=F_1 \otimes 1, \ldots, F_n\otimes 1$, which is still a regular sequence because
$R$ is flat over $A$.

Notice that $\bD=\tilde{\bD}/(F\otimes 1)=(G)/(F\otimes 1)$.
The preimage in
\newline
$A[\seq x n]_W\otimes_{A} R \,$ of the
annihilator of $\bD$  may thus be written as 
$(F\otimes 1):(G)$. This ideal quotient is generated by $F\otimes 1$ and the determinant of
any matrix $\Theta$ expressing the elements of $F\otimes 1$ as  linear combinations of the elements of $G$, see \cite{Wi} or \cite{Bu}. 
It follows that $\fD_N(R/A)$ is generated by the image in $R$ of $\det \Theta$.

On the other hand, since $G$ is a regular sequence and since
$$
\bD\otimes_{R^e}R = \bD/\bD^2 \cong \Omega_{R/A},
$$
the image in $R$ of  $\Theta$ is a presentation matrix
of $\Omega_{R/A}$. Thus the image of $\det \Theta$ also generates the ideal
$\fD_K(R/A) = R \, \Delta$.
\end{proof}

\vspace{0.001cm}

\begin{corollary}\label{Dedekind complementary module3} If 
the assumptions in the definition of the Dedekind different and the hypotheses of Theorem~$\ref{ci}$ 
are satisfied, then
$$\fC(R/A) = \fD_K(R/A)^{-1}= R \, \Delta ^{-1} .$$
\end{corollary}
\begin{proof} One uses Theorem \ref{Dedekind complementary module2}, Theorem \ref{ci}, and the fact that the fractional ideal $\fC(R/A)$ is
invertible, hence reflexive.
\end{proof}

\vspace{0.0cm}

\begin{theorem}\label{socle of Gorenstein}
 If $R$ is a local Gorenstein algebra over a field $k$ with ${\rm dim}_kR$
 finite and not divisible by the characteristic of $k$, then
 ${\frak D}_N(R/k)$
 is equal to the socle of $R$. If, moreover, $R$ is a complete intersection, then the socle of $R$ is generated by the Jacobian determinant.
 \end{theorem}

\begin{proof}  Since the trace of
any nilpotent element is 0, the trace lies in the socle of $\Hom_{k}(R,k)$ and generates it if the characteristic of $k$ does
not divide $\dim_{k}R$. Thus Proposition \ref{Noether} implies that
$\fD_N(R/k) \, {\rm Hom}_k(R,k)$ is the socle of ${\rm Hom}_k(R,k)$.
Therefore $\fD_N(R/k)$ is the socle of $R$ since
${\rm Hom}_k(R,k)\cong R$ as $R$-modules.

Finally, if $R$ is a complete intersection, then
${\frak D}_N(R/k) = {\frak D}_K(R/k)$ is generated by the Jacobian determinant, by Theorem~\ref{ci}.
\end{proof}

\vspace{0.0cm}

\proof[Proof of of Theorem~\ref{classic socle}]
One implication is a special case of Theorem~\ref{socle of Gorenstein}. To prove the opposite implication
 we must show that the K\"ahler different $\fD_K(R/k)$ is
0 when $R = k\lbracket x_{1}, \dots, x_{d}\rbracket/(a_{1}, \dots, a_{s})$ is not a 0-dimensional  complete intersection.

First suppose $R$ is 0-dimensional and not a complete intersection. Replacing the $a_{i}$ by general linear combinations, we may assume that
any $d$ of the $a_{i}$ form a regular sequence. By the previous theorem, the Jacobian determinant
of $a_{i_{1}},\dots,a_{i_{d}}$ generates the socle modulo $(a_{i_{1}},\dots,a_{i_{d}})$ and is thus contained in
$(a_{1}, \dots, a_{s})$ as required.

\def\JJ{{\mathcal J}}
Finally, suppose that $R$ is not 0-dimensional. To simplify the notation, set  $\m = (x_{1}, \dots, x_{d})$ and $\JJ=(a_{1}, \dots, a_{s})$ and suppose that $s$ is the minimal number of generators of $\JJ$. We may assume that $R$ is not a complete intersection since
otherwise $\fD_K(R/k) = 0$. For any sufficiently large integer $n$, the Artin-Rees Lemma and the Principal Ideal Theorem together imply that $\JJ+\m^{n}$ requires at least $s+\dim \JJ$ generators. Thus, $R/\m^{n}R$ is not a complete intersection. 

We conclude from the 0-dimensional argument that, for any $n\gg 0$,
$\fD_K((R/\m^{n}R)/k) = 0$. In particular, $\fD_K(R/k)$ is in $\m^{n}R$. By the Krull Intersection Theorem,
$\fD_K(R/k) = 0$.
\qed

\medskip

\medskip

\bibliographystyle{alpha}

\bigskip

\vbox{\noindent Author Addresses:\par
\smallskip
\noindent{David Eisenbud}\par
\noindent{Mathematical Sciences Research Institute,
Berkeley, CA 94720, USA}\par
\noindent{de@msri.org}\par

\medskip
\noindent{Bernd Ulrich}\par
\noindent{Department of Mathematics, Purdue University, West Lafayette, IN 47907, USA}\par
\noindent{ulrich@math.purdue.edu}\par
}

\end{document}